
\documentclass[journal]{IEEEtran}
\ifCLASSINFOpdf
  % \usepackage[pdftex]{graphicx}
  % declare the path(s) where your graphic files are
  % \graphicspath{{../pdf/}{../jpeg/}}
  % and their extensions so you won't have to specify these with
  % every instance of \includegraphics
  % \DeclareGraphicsExtensions{.pdf,.jpeg,.png}
\else
  % or other class option (dvipsone, dvipdf, if not using dvips). graphicx
  % will default to the driver specified in the system graphics.cfg if no
  % driver is specified.
  % \usepackage[dvips]{graphicx}
  % declare the path(s) where your graphic files are
  % \graphicspath{{../eps/}}
  % and their extensions so you won't have to specify these with
  % every instance of \includegraphics
  % \DeclareGraphicsExtensions{.eps}
\fi

\usepackage{amsmath,amssymb,bbm,amsthm}
\usepackage{color}
\usepackage{mathtools}

\usepackage{microtype}
\usepackage{graphicx}
\usepackage{subfigure}
\usepackage{booktabs} %
\usepackage{multicol, multirow}
\usepackage{float}

\usepackage{caption}
\usepackage{hyperref}

\usepackage{dsfont}

\usepackage{bbold}

\usepackage{dsfont}

\usepackage{bbold}

\DeclareMathOperator*{\R}{\mathbb{R}}
\DeclareMathOperator*{\E}{\mathbb{E}}
\DeclareMathOperator*{\Rn}{\mathbb{R}^n}
\DeclareMathOperator*{\Rd}{\mathbb{R}^d}

\newcommand{\RR}{\mathbb{R}}

\DeclareMathOperator*{\argmin}{arg\,min}

\DeclareMathOperator{\prox}{Prox}

\newcommand{\vareps}{\varepsilon}
\DeclarePairedDelimiter{\dotp}{\langle}{\rangle}
\newtheorem{lemma}{Lemma}
\newtheorem{theorem}{Theorem}
\newtheorem{remark}{Remark}
\newtheorem{corollary}{Corollary}
\newtheorem{definition}{Definition}
\newtheorem{assumption}{Assumption}
\newtheorem{proposition}{Proposition}

\newtheorem{example}{Example}

\makeatletter
\newcounter{procedure}

\makeatother

\usepackage{microtype}
\usepackage{graphicx}
\usepackage{subfigure}
\usepackage{booktabs} %

\usepackage{hyperref}

\usepackage{amsmath,amssymb,bbm,amsthm}
\usepackage{color}
\usepackage{mathtools}

\usepackage{dsfont}

\usepackage{bbold}

\usepackage{algorithm, algorithmic}

% *** Do not adjust lengths that control margins, column widths, etc. ***
% *** Do not use packages that alter fonts (such as pslatex).         ***
% There should be no need to do such things with IEEEtran.cls V1.6 and later.
% (Unless specifically asked to do so by the journal or conference you plan
% to submit to, of course. )

% correct bad hyphenation here
\hyphenation{op-tical net-works semi-conduc-tor}

\begin{document}
%
% paper title
% Titles are generally capitalized except for words such as a, an, and, as,
% at, but, by, for, in, nor, of, on, or, the, to and up, which are usually
% not capitalized unless they are the first or last word of the title.
% Linebreaks \\ can be used within to get better formatting as desired.
% Do not put math or special symbols in the title.
\title{Decentralized  Learning  with  Lazy  \\and  Approximate Dual Gradients}
%
%
% author names and IEEE memberships
% note positions of commas and nonbreaking spaces ( ~ ) LaTeX will not break
% a structure at a ~ so this keeps an author's name from being broken across
% two lines.
% use \thanks{} to gain access to the first footnote area
% a separate \thanks must be used for each paragraph as LaTeX2e's \thanks
% was not built to handle multiple paragraphs
%

% \author{Michael~Shell,~\IEEEmembership{Member,~IEEE,}
%         John~Doe,~\IEEEmembership{Fellow,~OSA,}
%         and~Jane~Doe,~\IEEEmembership{Life~Fellow,~IEEE}% <-this % stops a space
% \thanks{M. Shell was with the Department
% of Electrical and Computer Engineering, Georgia Institute of Technology, Atlanta,
% GA, 30332 USA e-mail: (see http://www.michaelshell.org/contact.html).}% <-this % stops a space
% \thanks{J. Doe and J. Doe are with Anonymous University.}% <-this % stops a space
% \thanks{Manuscript received April 19, 2005; revised August 26, 2015.}}
\author{Yanli Liu,\and
        Yuejiao Sun,\and
        and~Wotao Yin% <-this % stops a space
        \thanks{Y. Liu, Y. Sun, and W. Yin are with University of California, Los Angeles. This work was supported in part by AFOSR MURI grant FA9550-18-10502
and ONR grant N0001417121.}
        }

\maketitle

% As a general rule, do not put math, special symbols or citations
% in the abstract or keywords.
\begin{abstract}
This paper develops algorithms for decentralized machine learning over a network, where data are distributed, computation is localized, and communication is restricted between neighbors. A line of recent research in this area focuses on improving both computation and communication complexities. The methods SSDA and MSDA \cite{scaman2017optimal} have optimal communication complexity when the objective is smooth and strongly convex, and are simple to derive. However, they require solving a subproblem at each step. We propose new algorithms that save computation through using (stochastic) gradients and saves communications when previous information is sufficiently useful. Our methods remain relatively simple --- rather than solving a subproblem, they run Katyusha for a small, fixed number of steps from the latest point. An easy-to-compute, local rule is used to decide if a worker can skip a round of communication. Furthermore, our methods provably reduce communication and computation complexities of SSDA and MSDA. In numerical experiments, our algorithms achieve significant computation and communication reduction compared with the state-of-the-art.
\end{abstract}

%{\color{red} 13 page limit. The main body is hard to modify. Let us shorten the proof first. Then, maybe remove ridge experiment if needed.}

% Note that keywords are not normally used for peerreview papers.
% \begin{IEEEkeywords}
% IEEE, IEEEtran, journal, \LaTeX, paper, template.
% \end{IEEEkeywords}

% For peer review papers, you can put extra information on the cover
% page as needed:
% \ifCLASSOPTIONpeerreview
% \begin{center} \bfseries EDICS Category: 3-BBND \end{center}
% \fi
%
% For peerreview papers, this IEEEtran command inserts a page break and
% creates the second title. It will be ignored for other modes.
\IEEEpeerreviewmaketitle

\section{Introduction}
\label{sec: introduction}

Consider $n$ workers in a connected network $\mathcal{G}$, where each worker $i$ has a local minimization objective
$f_i(\theta)=\frac{1}{m}\sum_{j=1}^m f_{i,j}(\theta)$. Assume all workers have the same $m$ for simplicity. We aim to solve the distributed learning problem 
\begin{align}
\label{equ: original primal problem}
\mathop{\mathrm{minimize}}_{\theta\in \Rd} f(\theta)\coloneqq \sum_{i=1}^n f_i(\theta)= \sum_{i=1}^n \frac{1}{m}\sum_{j=1}^m f_{i,j}(\theta)
\end{align}
using a decentralized method. By ``decentralized'', we mean the global objective \eqref{equ: original primal problem} is achieved through local computation and between-neighbor communication. 

In decentralized computation, there is no central server to collect or distribute information, so it avoids a communication hot-spot and the potential disastrous failure of the central server. With a well-connected network, decentralized computation is robust since a small number of failed workers or communication links do not disconnect the network. Being decentralized also makes it harder for a malicious worker or eavesdropper to collect information, so it is relatively  secure. Because of these attractions, decentralized computation has been widely adopted in sensor networks, multi-agent controls, distributed machine learning, and recently federated learning.

Developing a decentralized method for \eqref{equ: original primal problem} can be reduced to solving a constrained optimization problem \cite{shi2015extra}. Let $\theta_i$ denote worker $i$'s \emph{local copy} of $\theta$. A decentralized method ensures each $\theta_i$ to equal its neighbors' copies and, consequently, equal all other copies in the network. Write $\Theta=(\theta_1,\theta_2,\ldots, \theta_n)\in\RR^{d\times n}$. Call $\theta_1=\dots= \theta_n$ \emph{consensus}. With a proper symmetric, positive semi-definite matrix $U$ (which is defined later) and $\sqrt{U}$ (satisfying $\sqrt{U}\sqrt{U}=U$), we can express consensus equivalently as $\Theta \sqrt{U}=0$. Let $F(\Theta)=\sum\limits_{i=1}^n f_i(\theta_i)$. We can equivalently rewrite problem \eqref{equ: original primal problem} as the constrained problem
\begin{align}
\label{equ: primal problem}
    \mathop{\mathrm{minimize}}_{\Theta}F(\Theta)\quad\mathrm{subject~to}~~{\Theta \sqrt{U}=0}.
\end{align}
A decentralized method defined for \eqref{equ: primal problem} performs local computation with $f_i$ and between-neighbor communication, which is expressed with the multiplication $\Theta U$. We review these methods in next subsection below.

The cost of communication of a decentralized method corresponds to the number of multiplications by $U$.  If a method can solve all instances in a class of problem \eqref{equ: primal problem} up to an accuracy with fewest $U$-multiplications, we say it is \emph{communication optimal}. It is challenging to find such a method for \eqref{equ: primal problem} since $\sqrt{U}$ appears in the constraints. The first communication-optimal method is SSDA and its variant MSDA \cite{scaman2017optimal}. Their optimality is established for smooth and strongly-convex $F$ and requires a subproblem oracle. Specifically, SSDA and MSDA apply Nesterov's accelerated gradient method \cite{nesterov2013introductory} to the dual of \eqref{equ: primal problem}. This is a simple yet effective idea. But, since computing the dual gradient is equivalent to  solving certain subproblems involving minimizing $f_i$ at each worker $i$, the computational complexities of SSDA and MSDA are given in the number of subproblems solved, even if $\nabla F$ is available. When subproblems are solved approximately in practice, it is unclear whether SSDA and MSDA remain communication-optimal.

In this paper, we develop decentralized methods that maintain communication optimality but use gradients of $F$ instead of solving subproblems. In addition, when the objective has a finite-sum structure, i.e., when $m>1$, our methods can use stochastic gradients of $F$. Specifically, like SSDA and MSDA, our methods solve \eqref{equ: primal problem} by solving its dual, but unlike them, each iteration of our methods involves a small, fixed number of Katyusha steps, starting from the latest point. Katyusha \cite{allen2017katyusha} is an  accelerated stochastic variance-reduction gradient method for solving a standalone finite-sum problem. While our methods maintain optimal communication complexity, they also achieve the best sample complexity among those based on gradients. 

Being communication optimal means we cannot find ways to save (significantly) more communication on the worst instance of a problem class. But there is often room to improve on general instances. Specifically, the workers can skip sending slowly varying information to their neighbors without slowing down convergence. Motivated by the method \textit{lazily aggregated gradients} or LAG \cite{chen2018lag}, which uses a rule to select communication in a server-worker setting, we follow its idea and develop a new rule for our methods. We analyze our method under the proposed rule and establishes the same worst-case computation and communication complexities. When workers have heterogeneous data, however, the rule leads to a much-reduced communication complexity.

\subsection{Prior art}
\label{subsec: related work}

\subsubsection{Related work}

To save computation, it is advantageous to allow concurrent information exchange. That is, every worker can communicate with all of its neighbors in each communication round. Several popular methods fall into this category, including distributed ADMM (D-ADMM) \cite{mota2013d,shi2014linear}, EXTRA \cite{shi2015extra}, exact diffusion \cite{yuan2018exact}], DIGing \cite{nedic2017achieving}, COLA \cite{he2018cola}, and their extensions. They all enjoy linear convergence with a constant step size. However, their worst-case iteration complexities are not optimal. 

Recently, this issue is partially resolved by the SSDA (Single Step Dual Accelerated method) and MSDA (Multi-Step Dual Accelerated method) proposed in \cite{scaman2017optimal}. In this work, Nesterov's accelerated gradient descent is applied to the dual problem, but assumes that the dual gradients $\nabla f^*_i$ are provided by an oracle. However, for most applications, a lot of computation is required to obtain an accurate dual gradient. To resolve this, \cite{uribe2018dual} proposes to compute $\vareps^2-$approximate dual gradients where $\vareps$  is the final target accuracy. This leads to an overall primal gradient complexity of $\mathcal{O}(\log^2(\frac{1}{\vareps}))$. Recently, the multi-step idea is also applied on nonconvex decentralized problems, and a near optimal communication complexity of $\mathcal{O}(\frac{1}{\varepsilon})$ is obtained \cite{sun2019distributed}. In
\cite{hendrikx2019accelerated}, the authors propose to calculate the proximal mapping of $f^*_{i,j}$ instead, which can be potentially easier. However, these proximal mappings are still not in closed form in general, and it is unclear how accurate they should be in order for the algorithm to converge. Furthermore, the network is required to be symmetric enough (e.g., complete graph or 2-D grid).

In order to exploit the finite sum structure of the data at each worker, several stochastic decentralized algorithms have also been developed to save computation. \cite{koloskova2019decentralized} proposes a decentralized stochastic algorithm with compressed communication but without linear convergence.  DSA \cite{mokhtari2016dsa} and DSBA \cite{shen2018towards} achieve linear convergence, but not at an Nesterov-accelerated rate. Among them, DSBA achieves a better rate but requires an oracle for the proximal mapping of $f_{i,j}$. Under nonconvex settings, \cite{sun2019improving} combines gradient estimation and gradient tracking, and achieves a communication and computation complexity of $\mathcal{O}(\frac{1}{\varepsilon})$.

\subsubsection{Other efforts to save communication}

Recently, many methods have been developed to save communication, which can be categorized as follows: (i) Gradient quantization and sparsification, and (ii) Skipping unnecessary communication rounds.

Gradient quantization technique applies a smaller bit width to lower the gradient’s floating-point precision. It was first proposed in 1-bit SGD \cite{seide20141,strom2015scalable}. Later, QSGD \cite{alistarh2017qsgd} introduced stochastic rounding to ensure the unbiasedness of the estimator. More recently, signSGD with majority vote \cite{bernstein2018signsgd} is developed for the centralized setting. In gradient sparsification, only the information preserving gradient coordinates are communicated (e.g., the ones that are large in magnitude). This idea is first introduced in \cite{seide20141}. Later, the skipped small gradient coordinates are accumulated and communicated when large enough \cite{stich2018sparsified,alistarh2018convergence}. More recently, \cite{wangni2018gradient}, which achieves a balance between the gradient variance and sparsity.

To save communication complexity, a line of work focuses on skipping communication by a fixed schedule. In local SGD methods \cite{lin2018don,stich2019local,yu2018parallel}, communication complexity is reduced by periodic averaging, recently, this strategy is also generalized to the decentralized setting \cite{wang2018cooperative}. However, they all require the data to be i.i.d. distributed across the workers, which is unrealistic for federated learning settings where data distribution is often heterogeneous.

Recently, the dynamic communication-saving strategy called LAG is proposed for the centralized setting \cite{chen2018lag}, which exploits the data heterogeneity rather than suffering from it. As mentioned before, this strategy results in a  provable communication reduction when the data distributions vary a lot across the workers. In this work, we propose an algorithm that generalizes this idea to the decentralized setting. This task is non-trivial since unlike LAG, stale information is no longer applied at the server but all over the network.

\subsection{Our contributions}
\label{subsec: our contributions}

In this work, we propose DLAG and MDLAG, which are stochastic decentralized algorithms that achieve both computation and communication reduction over those of SSDA and MSDA in \cite{scaman2017optimal}, respectively. On the one hand, our methods save computation by using highly inexact dual gradients that are obtained by efficient stochastic methods. Somewhat surprisingly, we show they maintain the convergence rates of SSDA and MSDA. On the other hand, they save communication by generalizing the idea of lazily aggregated gradients \cite{chen2018lag} to the decentralized setting, where each worker communicates with its neighbors only if the old approximate dual gradient in cache is too outdated. Otherwise, the old approximate dual gradient can still be applied for the current update and won't degrade the convergence rate.

In summary, DLAG and MDLAG enjoy the following nice properties (see also Table \ref{table: comparison}). %
\begin{enumerate}
    \item DLAG and MDLAG compute approximate dual gradients efficiently by warm start and cheap subroutines. Convergence is established, and the computation complexity does not depend on the (potentially high) cost of the oracles of exact dual gradient $\nabla f^*_i$ or exact proximal mapping $\prox_{f_{i,j}}$.
    
    \item In addition, DLAG also provably reduces communication complexity compared with the state-of-the-art, thanks to the idea of lazily aggregated gradients.
    %{\color{red} putting numbers here takes space and is not appropriate, since it is only a few tests.}
    \item All these claims are verified numerically.
\end{enumerate}

\begin{center}
\begin{table*}[ht]
\centering
\begin{tabular}{c|l|l|c|c}
\hline
\multirow{2}{*}{Method}          & Use  & Use  & \multirow{2}{*}{Computation complexity}                                                   & \multirow{2}{*}{Communication complexity}                                                              \\
&\multicolumn{1}{c|}{$\nabla f^*_{i}$} & \multicolumn{1}{c|}{$\prox_{f_{i,j}}$} & & \\ \hline\hline
SSDA    &      \multicolumn{1}{c|}{Yes}                         &        \multicolumn{1}{c|}{No}                         & $\tilde{\mathcal{O}}(n p_1\sqrt{\frac{\kappa_F}{\zeta(U)}})$            & $\tilde{\mathcal{O}}(|\mathcal{E}|\sqrt{\frac{\kappa_F}{\zeta(U)}})$             \\ \hline
MSDA    &    \multicolumn{1}{c|}{Yes}                           &      \multicolumn{1}{c|}{No}                           & $\tilde{\mathcal{O}}(n p_1\sqrt{\kappa_{F}})$                           & $\tilde{\mathcal{O}}(|\mathcal{E}|\sqrt{\frac{\kappa_F}{\zeta(U)}})$             \\ \hline
Distributed FGM      &       \multicolumn{1}{c|}{No}                        &   \multicolumn{1}{c|}{No}                              & $\tilde{\mathcal{O}}(n m\kappa_F\sqrt{\frac{1}{\zeta(U)}})$        & $\tilde{\mathcal{O}}(|\mathcal{E}|\sqrt{\frac{\kappa_F}{\zeta(U)}})$             \\ \hline
DSBA &            \multicolumn{1}{c|}{No}                   &            \multicolumn{1}{c|}{Yes}                     & $\tilde{\mathcal{O}}(n p_2(\kappa_{F}+\frac{1}{\zeta(U)}+m))$ & $\tilde{\mathcal{O}}(|\mathcal{E}|(\kappa_{F}+\frac{1}{\zeta(U)}+m))$ \\ \hline
ADFS  &          \multicolumn{1}{c|}{No}                     &             \multicolumn{1}{c|}{Yes}                    &  {\small $\tilde{\mathcal{O}}(n p_2(\sqrt{\frac{\kappa_F}{\zeta(U)}}+\frac{\frac{1}{n}\sum_{i=1}^{n}(m+\sqrt{m\kappa_i})}{\sqrt{\frac{\kappa_{\mathrm{min}}}{\kappa_{\mathrm{max}}}}}))$}                                                                                  & {\small $\tilde{\mathcal{O}}(|\mathcal{E}|(\sqrt{\frac{\kappa_F}{\zeta(U)}}+\frac{\frac{1}{n}\sum_{i=1}^{n}(m+\sqrt{m\kappa_i})}{\sqrt{\frac{\kappa_{\mathrm{min}}}{\kappa_{\mathrm{max}}}}}))$}                                                                                        \\ \hline DLAG (this paper)                &       \multicolumn{1}{c|}{No}                        &          \multicolumn{1}{c|}{No}                       &$\tilde{\mathcal{O}}(n(m+\sqrt{m\kappa_{\mathrm{max}}})\sqrt{\frac{\kappa_F}{\zeta(U)}})$            & $\tilde{\mathcal{O}}(q|\mathcal{E}|\sqrt{\frac{\kappa_F}{\zeta(U)}})$                                                                                       \\ \hline
MDLAG (this paper)                &       \multicolumn{1}{c|}{No}                        &          \multicolumn{1}{c|}{No}                       &$\tilde{\mathcal{O}}(n(m+\sqrt{m\kappa_{\mathrm{max}}})\sqrt{\kappa_F})$            & $\tilde{\mathcal{O}}(|\mathcal{E}|\sqrt{\frac{\kappa_F}{\zeta(U)}})$                                                                                       \\ \hline
\end{tabular}
\caption{Comparison of SSDA \cite{scaman2017optimal}, MSDA \cite{scaman2017optimal}, Distributed FGM \cite{uribe2018dual}, DSBA \cite{shen2018towards}, and ADFS \cite{hendrikx2019accelerated} with our DLAG and MDLAG. We have omitted a $\log(\frac{1}{\vareps})$ factor in $\tilde{\mathcal{O}}$. $\kappa_F$, $\kappa_{\mathrm{min}}$, $\kappa_{\mathrm{max}}$, and $\kappa_i$ are defined in Assumption \ref{assump: smoothness and strong convexity}. $\zeta(U)$ is the normalized eigengap of the network graph defined in Assumption \ref{assump: assumption on network topology}, and $|\mathcal{E}|$ is its number of edges. For SSDA and MSDA, $p_1$ is the complexity of computing an exact $\nabla f^*_i$. For DSBA and ADFS, $p_2$ is the complexity of computing an exact $\prox_{f_{i,j}}$. Depending on the problem, $p_1, p_2$ can be mild and can also be very large. In our DLAG, $q\leq 1$ depends on the distribution of $\mu_i$ across the workers, and it is defined in \eqref{equ: compare communication complexity}.}
\label{table: comparison}
\end{table*}
\end{center}

\section{Notation and assumptions}
\label{sec: preliminaries and assumptions}

Throughout this paper, we use $\|\cdot\|$ for  $\ell_2-$norm of vectors and Frobenius norm of matrices, $\langle\cdot, \cdot\rangle$ stands for dot product. For a symmetric, positive semidefinite matrix $M\in\R^{n\times n}$, 
we define $\sqrt{M}$ by $\sqrt{M}\coloneqq S^T A^{\frac{1}{2}}S$, where $M=S^T A S$ is the eigen-decomposition of $M$. We denote the null space of $M$ by $\mathrm{null}(M)$. $\mathbf{1}$ stands for the all-one vector $(1,1,...,1)^T\in\Rn$.

For $\varphi: \Rn\rightarrow\R$, its conjugate $\varphi^*: \Rn\rightarrow\R$ is defined as:
\[
\varphi^*(y)=\sup_{x\in\Rn}\{\langle y, x\rangle-\varphi(x)\}.
\]

\begin{definition}
\label{def: smoothness}
We say that $\varphi: \Rd \rightarrow \R$ is $L-$smooth with $L\geq 0$, if it is differentiable and satisfies
\[
\varphi(y)\leq \varphi(x)+\langle \nabla \varphi(x), y-x\rangle+\frac{L}{2}\|y-x\|^2, \forall x, y \in\Rd.
\]
\end{definition}
\begin{definition}
\label{def: strong convexity}
We say that $\varphi: \Rd \rightarrow \R$ is $\mu-$strongly convex with $\mu\geq 0$, if 
\[
\varphi(y)\geq \varphi(x)+\langle \nabla \varphi(x), y-x\rangle+\frac{\mu}{2}\|y-x\|^2, \forall x, y \in\Rd.
\]
\end{definition}

We will make the following assumption regarding the objective \eqref{equ: original primal problem} throughout this paper.
\begin{assumption}
\label{assump: smoothness and strong convexity}
In the objective \eqref{equ: original primal problem}, each $f_{i,j}$ is $L_i-$smooth and $\mu_i-$strongly convex. Let $\kappa_i=L_i/\mu_i$,  $\mu_{\mathrm{min}}\coloneqq\min_{i} \{\mu_i\}$, $L_{\mathrm{max}}\coloneqq\max_i\{L_i\}$, $\kappa_{\mathrm{min}}\coloneqq\min_i\{\kappa_i\}$, $\kappa_{\mathrm{max}}\coloneqq\max_i\{\kappa_i\}$, and $\kappa_F\coloneqq L_{\mathrm{max}}/\mu_{\mathrm{min}}$.
\end{assumption}

In this paper, we minimize \eqref{equ: original primal problem} on a network $\mathcal{G}=\{\mathcal{V}, \mathcal{E}\}$ with $\mathcal{V}=\{1,2,...,n\}$ being the set of nodes (or workers), and $\mathcal{E}$ the set of all (undirected) edges. By convention, $(i,j)=(j,i)$ denotes the edge that connects workers $i$ and $j$. For each worker $i$, let $\mathcal{N}(i)=\{j\,|\, (i,j) \in \mathcal{E}\}\cup \{i\}$ denote the set of neighbors of worker $i$ with $i$ itself included.

In network $\mathcal{G}$, communication is represented as a matrix multiplication with a matrix $U=I-W$, where $W$ satisfies the following assumption:

\begin{assumption}
\label{assump: assumption on network topology}
\begin{enumerate}
    \item $W\in\R^{n\times n}$ is symmetric and $U=I-W$ is positive semidefinite. 
    \item $W$ is defined on the edges of network $\mathcal{G}$, that is, $W_{i,j}\neq 0$ if and only if $(i,j)\in\mathcal{E}$.
    \item $\mathrm{null}(U)=\mathrm{null}(I-W)=\mathrm{span}\{\mathbf{1}\}$.
\end{enumerate}

\end{assumption}

Let $\sigma_1(U)\geq...\geq \sigma_{n-1}(U)> \sigma_n(U)=0$ be the spectrum of $U$, and $\zeta(U)\coloneqq{\sigma_{n-1}(U)}/{\sigma_1(U)}$ as the normalized eigengap of $U$. $W$ can be generated in many ways, for example, by the maximum-degree or Metropolis-Hastings rules \cite{sayed2014adaptation}.

As mentioned before, since $\mathrm{null}(\sqrt{U})=\mathrm{span}\{\mathbf{1}\}$ and $\sqrt{U}$ is summetric, we can reformulate the problem \eqref{equ: original primal problem} as
\begin{align}
\label{equ: primal problem again}
    \mathop{\mathrm{minimize}}_{\Theta \sqrt{U}=0}F(\Theta),
\end{align}
where $\Theta=(\theta_1,\theta_2,\ldots, \theta_n)\in\RR^{d\times n}$ and $F(\Theta)=\sum\limits_{i=1}^n f_i(\theta_i)$, the condition number of $F$ is $\kappa_F$. 

The dual problem of \eqref{equ: primal problem again} can be written as
\begin{align}
\label{equ: dual problem}
    \mathop{\mathrm{minimize}}_{\xi\in\RR^{d\times n}} G(\xi)\coloneqq F^*(\xi \sqrt{U}).
\end{align}

The properties of $G$ are characterized in \cite{scaman2017optimal} as follows:
\begin{proposition}
\label{prop: properties of G}
\begin{enumerate}
    \item $G(\xi)$ is $\beta-$smooth, where $\beta\coloneqq \frac{\sigma_1(U)}{\mu_{\mathrm{min}}}$.
    \item In $S\coloneqq\{\xi \in \R^{d\times n}\,|\,\xi\mathbf{1}=0\}$,  $G(\xi)$ is $\alpha-$strongly convex, where $\alpha\coloneqq \frac{\sigma_{n-1}(U)}{L_{\mathrm{max}}}$.
    \item In $S\coloneqq\{\xi \in \R^{d\times n}\,|\,\xi\mathbf{1}=0\}$, the condition number of $G(\xi)$ is $\kappa$, where $\kappa\coloneqq\frac{\beta}{\alpha}=\frac{\kappa_F}{\zeta(U)}$.
\end{enumerate}
\end{proposition}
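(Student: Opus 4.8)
The plan is to deduce all three items from two classical facts: the duality between smoothness and strong convexity under conjugation, and the behaviour of these moduli under composition with a linear map. First I would note that, by Assumption~\ref{assump: smoothness and strong convexity}, each $f_i=\frac1m\sum_j f_{i,j}$ is $L_i$-smooth and $\mu_i$-strongly convex (an average inherits both), so the block-separable sum $F(\Theta)=\sum_i f_i(\theta_i)$ is $L_{\mathrm{max}}$-smooth and $\mu_{\mathrm{min}}$-strongly convex, the modulus of a separable sum being the largest (resp.\ smallest) blockwise one. Since $F$ is finite-valued, convex, closed, $\mu_{\mathrm{min}}$-strongly convex and $L_{\mathrm{max}}$-smooth, standard Fenchel duality gives that $F^*$ is differentiable with $\frac{1}{\mu_{\mathrm{min}}}$-Lipschitz gradient and is $\frac{1}{L_{\mathrm{max}}}$-strongly convex.

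Next I would transfer these properties to $G(\xi)=F^*(\xi\sqrt U)$ through the linear map $\xi\mapsto\xi\sqrt U$ on $\R^{d\times n}$ equipped with the Frobenius inner product. Since $\sqrt U$ is symmetric, $\langle A,\,B\sqrt U\rangle=\langle A\sqrt U,\,B\rangle$, so the chain rule gives $\nabla G(\xi)=\nabla F^*(\xi\sqrt U)\sqrt U$; writing the first-order smoothness inequality for $F^*$ at the points $\xi\sqrt U$ and $\eta\sqrt U$ and substituting these identities yields $G(\eta)\le G(\xi)+\langle\nabla G(\xi),\eta-\xi\rangle+\frac{1}{2\mu_{\mathrm{min}}}\|(\eta-\xi)\sqrt U\|^2$. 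Bounding $\|(\eta-\xi)\sqrt U\|^2\le\sigma_1(U)\|\eta-\xi\|^2$ for all $\xi,\eta$ then gives item~1 with $\beta=\sigma_1(U)/\mu_{\mathrm{min}}$.

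For item~2 the point is to restrict to $S=\{\xi:\xi\mathbf{1}=0\}$, which is exactly the set of matrices whose rows are orthogonal to $\mathbf{1}$, equivalently lie in $\mathrm{range}(\sqrt U)=\mathrm{span}\{\mathbf{1}\}^{\perp}$, because $\mathrm{null}(\sqrt U)=\mathrm{null}(U)=\mathrm{span}\{\mathbf{1}\}$ by Assumption~\ref{assump: assumption on network topology}. As $S$ is a linear subspace, $\eta-\xi\in S$ whenever $\xi,\eta\in S$, and on $S$ one has $\|(\eta-\xi)\sqrt U\|^2\ge\sigma_{n-1}(U)\|\eta-\xi\|^2$ since $\sigma_{n-1}(U)$ is the smallest nonzero eigenvalue of $U$. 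Feeding this lower bound into the strong-convexity inequality for $F^*$ transported as above gives item~2 with $\alpha=\sigma_{n-1}(U)/L_{\mathrm{max}}$, and item~3 is then immediate arithmetic, $\kappa=\beta/\alpha=\frac{\sigma_1(U)}{\sigma_{n-1}(U)}\cdot\frac{L_{\mathrm{max}}}{\mu_{\mathrm{min}}}=\kappa_F/\zeta(U)$. The one delicate point is item~2: one has to argue entirely with first-order inequalities, since $F^*$ need not be twice differentiable, and carefully check that composition with the rank-deficient map $\sqrt U$ recovers strong convexity precisely on the subspace that is row-wise complementary to $\mathrm{null}(\sqrt U)$; everything else is a routine transcription of known conjugate-function estimates.
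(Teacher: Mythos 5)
Your proof is correct and follows essentially the standard argument: the paper itself states this proposition without proof, citing \cite{scaman2017optimal}, where the same route is taken --- conjugate duality turns $\mu_{\mathrm{min}}$-strong convexity and $L_{\mathrm{max}}$-smoothness of $F$ into $\tfrac{1}{\mu_{\mathrm{min}}}$-smoothness and $\tfrac{1}{L_{\mathrm{max}}}$-strong convexity of $F^*$, and composition with $\xi\mapsto\xi\sqrt{U}$ scales these moduli by $\sigma_1(U)$ and, on the subspace $S$ complementary to $\mathrm{null}(\sqrt{U})$, by $\sigma_{n-1}(U)$. Your handling of the one delicate point (restricting to $S$ so that the rows of $\eta-\xi$ avoid $\mathrm{null}(\sqrt U)=\mathrm{span}\{\mathbf{1}\}$, and arguing via first-order inequalities only) is exactly what is needed.
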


% \begin{proof}
% This result appeared in \cite{scaman2017optimal}. 
% For completeness, we provide proof in Appendix \ref{app: proof of properties of G}.
% \end{proof}

\section{Proposed algorithms}
\label{sec: proposed algorithm}

To solve \eqref{equ: dual problem}, \cite{scaman2017optimal} applies Nesterov's accelerated gradient descent(AGD) to the dual problem \eqref{equ: dual problem}:
\begin{align}
\label{equ: Nesterov iterations}
\begin{split}
    \lambda^{k+1}&=\xi^k-\eta\nabla F^*(\xi^k \sqrt{U})\sqrt{U},\\
    \xi^{k+1}&=\lambda^{k+1}+\frac{\sqrt{\kappa}-1}{\sqrt{\kappa}+1}(\lambda^{k+1}-\lambda^k).
\end{split}
\end{align}

With $x^k=\xi^k\sqrt{U}$ and $y^k=\lambda^k\sqrt{U}$, \eqref{equ: Nesterov iterations} simplifies to
\begin{align}
\label{equ: equivalent Nesterov iterations}
\begin{split}
    y^{k+1}&=x^k-\eta \nabla F^*(x^k)U,\\
    x^{k+1}&=y^{k+1}+\frac{\sqrt{\kappa}-1}{\sqrt{\kappa}+1}(y^{k+1}-y^k).
\end{split}
\end{align}
The authors call \eqref{equ: equivalent Nesterov iterations} SSDA. When the matrix $U$ is replaced by $P_{K}(U)$, the method is called MSDA, where $P_K$ is a polynomial of degree $K$ and $K=\lfloor1/\sqrt{\zeta(U)}\rfloor$.

SSDA and MSDA use exact dual gradients $\nabla f^*_j$, which may be expensive to obtain since $f^*_j$ is not in closed form in many applications. So, extra runtime may be required to compute an accurate dual gradient. Furthermore, at each iteration of SSDA and MSDA, every worker needs to communicate with its neighbors once or $K$ times, which leads to a lot of concurrent information exchanges.
where $|\mathcal{E}|$ is the number of edges in the network. 
This may not be feasible when the communication budget of each worker is limited.

In this work, we propose Dual Accelerated method with Lazy Approximate Gradient(DLAG) (Algorithm \ref{alg: local formulation}) and Multi-DLAG(MDLAG) (Algorithm \ref{alg: MDLAG global formulation}), where the two aforementioned issues are resolved in the following ways:

\textbf{Applying Approximate Dual Gradients of Low cost.} To get rid of the high computation cost of computing dual gradients, we propose to use approximate dual gradients that are computed efficiently. Specifically, the approximate dual gradient $\theta^k_i\approx \nabla f_i^*(x^k_i)$ is given by approximately solving the following subproblem with warm start at $\theta^{k-1}_i$:
\begin{align}
\label{equ: subproblem}
\begin{split}
\theta^k_i&\approx\argmin_{\theta\in\Rd}\{f_i(\theta)-\langle \theta, x^k_i\rangle\} \\
&=\argmin_{\theta\in\Rd}\{\frac{1}{m}\sum_{j=1}^m \left(f_{i,j}(\theta)-\langle \theta, x^k_i\rangle\right)\}.
\end{split}
\end{align}
To obtain an approximate solution $\theta^k_i$, we apply a \emph{fixed-step} subroutine, which can be taken from many algorithms, e.g.,  Nesterov's accelerated gradient descent (AGD). Other choices that exploit the finite-sum structure of \eqref{equ: subproblem} are randomized algorithms such as SVRG \cite{johnson2013accelerating} and Katyusha \cite{allen2017katyusha}. %

In previous works such as \cite{scaman2017optimal}, \cite{shen2018towards}, and \cite{hendrikx2018accelerated}, the idea of using warm start has been implemented numerically and is shown to efficient. In this work, we provide the first convergence guarantee for this strategy.

\textbf{Skipping Unnecessary Communication.} To reduce communication, we generalize the idea of lazily aggregated gradient of \cite{chen2018lag} to the decentralized setting with approximate dual gradients. Specifically, at iteration $k$, worker $i$ has $\hat{\theta}^{k-1}_i=\theta^{k-1-d^{k-1}_i}_i$ in its cache, where $d^{k-1}_i\geq 0$ is the age of the vector at iteration $k-1$. An age of 0 means ``up to date.'' If worker $i$'s lazy condition \eqref{equ: worker's condition 1} is satisfied, then worker $i$ will not send out anything to its neighbours $i'\in\mathcal{N}(i)\backslash\{i\}$, and all the workers in $\mathcal{N}(i)$ will use the \textit{lazy} approximate dual gradient $\hat{\theta}^k_i\coloneqq\hat{\theta}^{k-1}_i$ for update; If \eqref{equ: worker's condition 1} is not satisfied, then worker $i$ sets $\hat{\theta}^k_i\coloneqq\theta^k_i$ and sends $\theta^k_i-\hat{\theta}^{k-1}_i$ out to $i'\in\mathcal{N}(i)$.
\begin{align}
\textbf{Worker $i$'s lazy}& \,\textbf{condition (for skipping communication)} \nonumber\\
\|\hat{\theta}^{k-1}_i-\theta^k_i\|^2  & \leq  3\sum_{j=0}^{k-D-1}\frac{c^{k-D-j}}{\mu^2_{\mathrm{min}}}\|x^j_i-x^{j+1}_i\|^2 \nonumber\\
&\quad +3\sum_{j=0}^{k-1}\frac{c^{k-j}}{\mu^2_{\mathrm{min}}}\|x^j_i-x^{j+1}_i\|^2 \nonumber\\
&\quad+ 3\sum_{j=k-D}^{k-1}\frac{c}{\mu^2_{\mathrm{min}}}\|x^j_i-x^{j+1}_i\|^2 \nonumber\\
&\quad+3\sum_{j=k-D}^{k-1}\frac{\gamma}{\mu^2_{\mathrm{min}}}\|x^j_i-x^{j+1}_i\|^2. \label{equ: worker's condition 1}
\end{align}
In \eqref{equ: worker's condition 1}, $c\in (0,1)$ controls inexactness of the approximate dual gradient, a larger $c$ requires less accurate dual gradients but causes a larger iteration complexity; $\gamma>0$ reflects the tolerance for gradient staleness, a larger $\gamma$ leads to less frequent communication but more iterations. Finally, $D$ is the maximal delay of gradients and we enforce $d^k_i\leq D$ for all workers and iterations. In Sec. \ref{sec: main theory}, we will show that appropriate choices of $c$ and $\gamma$ 
lead to computation and communication reduction. 
\begin{remark}
\begin{enumerate}
    \item The lazy condition \eqref{equ: worker's condition 1} adapts to the data heterogeneity across all the workers. We will see in Theorem \ref{thm: communication complexity} that, workers with smaller smoothness constants $\frac{1}{\mu_i}$ satisfy their lazy conditions more often, thus can skip more communication. 
    \item The lazy condition \eqref{equ: worker's condition 1} can be implemented with a mild memory requirement of $\mathcal{O}(D)$.
    % \item The lazy condition \eqref{equ: worker's condition 1} can be implemented with low memory requirement; see Appendix \ref{app: implementation of condition}. 
\end{enumerate}
\end{remark}

\begin{algorithm}[ht]
\caption{Dual Decentralized learning with Lazy and Approximate dual gradients (DLAG)}
\label{alg: local formulation}
    \textbf{Input:} $x^0_i=y^0_i=0$, $\hat{\theta}^0_i=\theta^0_i=\nabla f^*_i(x^0_i)$\footnotemark[1], and $P^0_i=\sum_{j\in\mathcal{N}(i)}U_{i j}\theta^0_j$, step size $\eta>0$, parameter $s\geq 1$.
    \begin{flushleft}
    \textbf{Output:} 
    $y^K=(y^K_1, y^K_2, ...,y^K_n).$
    \end{flushleft}
    \begin{algorithmic}[1]
        \FOR{each worker $i$ in parallel}
        \STATE{Read $P_i^{k-1}$, $\hat{\theta}^{k-1}_i$, and $\theta^{k-1}_i$ from cache;} 
        \STATE{Get $\theta^k_i$ via {\small $\mathcal{O}\left((m+\sqrt{m\kappa_{\mathrm{max}}})\log(\frac{2\kappa_{\mathrm{max}}}{c})\right)$} stochastic gradient steps of Katyusha, warm started at $\theta^{k-1}_i$;}
        \IF{$\hat{\theta}^{k-1}_i$
        fails condition \eqref{equ: worker's condition 1} \textbf{or} $d^{k-1}_i=D$}
        \STATE{Send $Q_i^k\coloneqq\theta^k_i-\hat{\theta}^{k-1}_{i}$ to worker $i'\in\mathcal{N}(i)\backslash \{i\}$;}
        \STATE{$\hat{\theta}^{k}_{i}\leftarrow {\theta}^{k}_{i}$;}
        \STATE{$d^{k}_i=0$;}
        \ELSE{}
        \STATE{(Worker $i$ sends out nothing)}
        \STATE{$\hat{\theta}^{k}_i\leftarrow \hat{\theta}^{k-1}_i$;}
        \STATE{$d^k_i=d^{k-1}_i+1$;}
        \ENDIF
        \STATE{Let $S^k_i\coloneqq\{j\in\mathcal{N}(i)\mid j \,\,\text{sends out}\,\, Q^k_j\}$;}
        \STATE{Update cache: $P_i^k\leftarrow P_i^{k-1}+\sum_{j\in S^k_i }U_{ij}Q_j^k$;}
        \STATE{$y^{k+1}_i\leftarrow x^k_i-\eta P_i^k$;}
        \STATE{$x^{k+1}_i\leftarrow y^{k+1}_i+\frac{\sqrt{s\kappa}-1}{\sqrt{s\kappa}+1}(y^{k+1}_i-y^k_i)$;}
        \ENDFOR
        \STATE{$k\leftarrow k+1$;}
    \end{algorithmic}
\end{algorithm}
\footnotetext[1]{DLAG computes exact dual gradient only \textit{once} for initialization. This cost is negligible. %
}

We can also formulate DLAG in an equivalent form using matrix multiplications in Algorithm \ref{alg: global formulation}, which makes it easy to present our theoretical analyses.

\begin{algorithm}[ht]
\caption{DLAG: global formulation}
\label{alg: global formulation}
    \textbf{Input:} problem data $F(\Theta)=\sum f_i(\theta_i)$, initialization $x^0=y^0=0$ and $\hat{\Theta}^0=\Theta^0=\nabla F^*(x^0)$ step size $\eta>0$, parameter $s\geq 1$.
    \begin{flushleft}
    \textbf{Output:} $y^K=(y^K_1, y^K_2, ...,y^K_n).$
    \end{flushleft}
    \begin{algorithmic}[1]
        \STATE{$y^{k+1}\leftarrow x^k-\eta \hat{\Theta}^k U$;}
        \STATE{$x^{k+1}\leftarrow y^{k+1}+\frac{\sqrt{s\kappa}-1}{\sqrt{s\kappa}+1}(y^{k+1}-y^k)$;}
        \STATE{$k\leftarrow k+1$;}
    \end{algorithmic}
\end{algorithm}
In line 1 of Algorithm \ref{alg: global formulation},  $\hat{\Theta}^k=(\hat{\theta}^k_1,...,\hat{\theta}^k_n)$. Note that $\hat{\theta}^k_i={\theta}^{k}_i$ if worker $i$'s lazy condition \eqref{equ: worker's condition 1} is unsatisfied or the delay $d_{k-1}=D$, and $\hat{\theta}^k_i=\hat{\theta}^{k-1}_i$ otherwise. 

Following the same idea, we also apply the idea of lazy and approximate dual gradients to MSDA, which gives MDLAG (Algorithm \ref{alg: MDLAG global formulation}). Compared with DLAG, MDLAG applies the Chebyshev acceleration technique \cite{young2014iterative}, where the gossip matrix $U$ is replaced by $P_K(U)$ and $P_K$ is a polynomial of power $K$\footnotemark[1]. $P_K(U)$ requires $K$ rounds of communication. MDLAG has a better computation complexity but may not save communication as much as DLAG since it only reduces the communication of the first round. The detail of MDLAG can be found in App. \ref{app: MDLAG}.
\footnotetext[1]{Specifically, $P_K(U)=I-\frac{T_K(c_2(I-U))}{T_K(c_2I)},$ where $c_2=\frac{1+\gamma}{1-\gamma}$ and $T_K$ is a Chebyshev polynomial of power $K$.}
\begin{algorithm}[ht]
\caption{MDLAG: global formulation}
\label{alg: MDLAG global formulation}
    \textbf{Input:} problem data $F(\Theta)=\sum f_i(\theta_i)$, initialization $x^0=y^0=0$ and $\hat{\Theta}^0=\Theta^0=\nabla F^*(x^0)$ step size $\eta>0$, parameter $s\geq 1$, $K=\lfloor\frac{1}{\sqrt{\zeta(U)}}\rfloor, \kappa'=\frac{\kappa_F}{\zeta(P_K(U))}.$
    \begin{flushleft}
    \textbf{Output:} $y^K=(y^K_1, y^K_2, ...,y^K_n)$
    \end{flushleft}
    \begin{algorithmic}[1]
        \STATE{$y^{k+1}\leftarrow x^k-\eta \hat{\Theta}^k P_K(U)$;}
        \STATE{$x^{k+1}\leftarrow y^{k+1}+\frac{\sqrt{s\kappa'}-1}{\sqrt{s\kappa'}+1}(y^{k+1}-y^k)$;}
        \STATE{$k\leftarrow k+1$;}
    \end{algorithmic}
\end{algorithm}

\section{Main theory}
\label{sec: main theory}

In this section, we proceed to establish the gradient and communication complexities of Algorithms \ref{alg: global formulation} and \ref{alg: MDLAG global formulation}. %

First for DLAG, note that the iterations $x^k, y^k$ in Algorithm \ref{alg: global formulation} satisfy $x^k\mathbf{1}=y^k\mathbf{1}=0$, so there exist $\xi^k, \lambda^k$ such that $x^k=\xi^k\sqrt{U}, y^k=\lambda^k\sqrt{U}$ ($\xi^k$ and $\lambda^k$ are never calculated in practice, they are just for the purpose of proof). Algorithm \ref{alg: global formulation} can then be written as
\begin{align}
\label{equ: equivalent global formulation}
\begin{split}
    \lambda^{k+1}&=\xi^k-\eta \hat{\Theta}^k \sqrt{U},\\
    \xi^{k+1}&=\lambda^{k+1}+\frac{\sqrt{s\kappa}-1}{\sqrt{s\kappa}+1}(\lambda^{k+1}-\lambda^k).
\end{split}
\end{align}
Comparing \eqref{equ: equivalent global formulation} with SSDA \eqref{equ: Nesterov iterations}, we can see that their difference lies in the update directions $\hat{\Theta}^k\sqrt{U}$ and $\nabla F^*({\xi}^{k}\sqrt{U})\sqrt{U}$. To bound their difference, we need the following lemma characterizing the dynamics of the error $\Theta^k-\nabla F^*(x^k)$, which lays the theoretical foundation for incorporating the warm start into our convergence proof.

\begin{lemma}[Error propagating dynamics]
\label{lem: error propagating dynamics}
In line 2 of Algorithm \ref{alg: local formulation}, 
if the subproblem \eqref{equ: subproblem} is solved by Katyusha with $\mathcal{O}\left((m+\sqrt{m\kappa_{\mathrm{max}}})\log(\frac{2\kappa_{\mathrm{max}}}{c})\right)=\mathcal{O}(m+\sqrt{m\kappa_{\mathrm{max}}})$ stochastic gradient evaluations warm started at $\theta^{k-1}_i$, then
\[
\mathbb{E}\|\Theta^k-\nabla F^*(x^k)\|^2\leq \sum_{j=0}^{k-1}{c}^{k-j}\mathbb{E}\|\nabla F^*(x^j)-\nabla F^*(x^{j+1})\|^2.
\]
\end{lemma}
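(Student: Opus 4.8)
The plan is to combine the linear convergence of Katyusha on each worker's strongly convex subproblem with a routine unrolling of the resulting error recursion, being careful about the randomness introduced by the warm start.

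First I would isolate a per-worker guarantee. The subproblem at worker $i$ in \eqref{equ: subproblem}, namely minimizing $\phi_i^k(\theta):=f_i(\theta)-\langle\theta,x^k_i\rangle=\frac1m\sum_{j}\big(f_{i,j}(\theta)-\langle\theta,x^k_i\rangle\big)$, is a finite sum of $L_i$-smooth, $\mu_i$-strongly convex functions whose unique minimizer is exactly $\nabla f^*_i(x^k_i)$. By the standard complexity of Katyusha \cite{allen2017katyusha}, running $T=\mathcal{O}\big((m+\sqrt{m\kappa_i})\log(1/\rho)\big)$ stochastic gradient steps warm started at $\theta^{k-1}_i$ produces $\theta^k_i$ with $\mathbb{E}\big[\phi_i^k(\theta^k_i)-\phi_i^k(\nabla f^*_i(x^k_i))\big]\le\rho\big(\phi_i^k(\theta^{k-1}_i)-\phi_i^k(\nabla f^*_i(x^k_i))\big)$. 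Lower bounding the left-hand side by $\frac{\mu_i}{2}\|\theta^k_i-\nabla f^*_i(x^k_i)\|^2$ ($\mu_i$-strong convexity, evaluated at the minimizer where the gradient vanishes) and upper bounding the right-hand side by $\frac{L_i}{2}\|\theta^{k-1}_i-\nabla f^*_i(x^k_i)\|^2$ ($L_i$-smoothness, same point), I get $\mathbb{E}\|\theta^k_i-\nabla f^*_i(x^k_i)\|^2\le\rho\kappa_i\|\theta^{k-1}_i-\nabla f^*_i(x^k_i)\|^2$. Choosing $\rho=\frac{c}{2\kappa_{\mathrm{max}}}$ makes $\rho\kappa_i\le c/2$ and turns the iteration count into $\mathcal{O}\big((m+\sqrt{m\kappa_{\mathrm{max}}})\log(\tfrac{2\kappa_{\mathrm{max}}}{c})\big)$, matching line 3 of Algorithm \ref{alg: local formulation}.

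Next I would make the stochastic bookkeeping precise. Let $\mathcal{F}_{k-1}$ be the $\sigma$-algebra generated by all Katyusha randomness through iteration $k-1$. Since $x^k$ is built from $y^k,y^{k-1}$, hence from $\hat{\Theta}^{k-1},\hat{\Theta}^{k-2}$, which are deterministic functions of $\Theta^0,\dots,\Theta^{k-1}$ and the deterministic lazy rule \eqref{equ: worker's condition 1}, both $x^k_i$ and $\theta^{k-1}_i$ are $\mathcal{F}_{k-1}$-measurable, so the only fresh randomness in $\theta^k_i$ comes from the iteration-$k$ Katyusha run. Applying the per-worker bound conditionally on $\mathcal{F}_{k-1}$ and summing over $i$ gives $\mathbb{E}\big[\|\Theta^k-\nabla F^*(x^k)\|^2\mid\mathcal{F}_{k-1}\big]\le\frac{c}{2}\|\Theta^{k-1}-\nabla F^*(x^k)\|^2$. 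Then Young's inequality $\|a+b\|^2\le 2\|a\|^2+2\|b\|^2$ with $a=\Theta^{k-1}-\nabla F^*(x^{k-1})$, $b=\nabla F^*(x^{k-1})-\nabla F^*(x^k)$, followed by taking total expectations, yields the scalar recursion $e_k\le c\,e_{k-1}+c\,\mathbb{E}\|\nabla F^*(x^{k-1})-\nabla F^*(x^k)\|^2$, where $e_k:=\mathbb{E}\|\Theta^k-\nabla F^*(x^k)\|^2$.

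Finally, the initialization $\hat{\Theta}^0=\Theta^0=\nabla F^*(x^0)$ gives $e_0=0$, so unrolling the recursion from $k$ down to $0$ produces $e_k\le\sum_{j=0}^{k-1}c^{k-j}\,\mathbb{E}\|\nabla F^*(x^j)-\nabla F^*(x^{j+1})\|^2$, which is the claim. The one genuinely delicate step is the first: pinning down the exact form of the Katyusha epoch-wise function-value contraction and converting it into a squared-distance contraction with the factor $\kappa_{\mathrm{max}}$ absorbed into the logarithm in the iteration count. A secondary point needing care is the conditioning argument, i.e.\ verifying that $x^k$ carries no randomness beyond $\mathcal{F}_{k-1}$; this holds precisely because the communication/lazy decisions are deterministic given the past iterates. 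Everything after the first step is an elementary recursion.
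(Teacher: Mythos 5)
Your proposal is correct and follows essentially the same route as the paper's proof: the same Katyusha function-value contraction converted to a squared-distance contraction via the $\mu_i$-strong-convexity lower bound and $L_i$-smoothness upper bound (which is exactly how the paper absorbs $\kappa_{\mathrm{max}}$ into the logarithm), the same Young's inequality step yielding $e_k\le c\,e_{k-1}+c\,\mathbb{E}\|\nabla F^*(x^{k-1})-\nabla F^*(x^k)\|^2$, and the same unrolling from $e_0=0$. Your explicit $\mathcal{F}_{k-1}$-measurability check is a point the paper leaves implicit, but it does not change the argument.
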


\begin{proof}
See Appendix \ref{app: proof of error dynamics}.
\end{proof}

In view of Lemma \ref{lem: error propagating dynamics}, we introduce the following Lyapunov function for Algorithm \ref{alg: global formulation} to establish convergence.
\begin{align}
\label{equ: lyapunov function}
&L^k\coloneqq 2\eta s\kappa\left(G(\lambda^k)-G(\xi^{\star})\right)+\|v^k-\xi^{\star}\|^2\nonumber\\ 
&+\sum\limits_{d=1}^{D}c_d\|\xi^{k+1-d}-\xi^{k-d}\|^2+\sum_{d=1}^k\tilde{c}_d\|\xi^{k+1-d}-\xi^{k-d}\|^2.
\end{align}
Here $\xi^{\star}$ is the minimizer of $G(\xi)$, $v^k=(1+\sqrt{s\kappa})\xi^k-\sqrt{s\kappa}\lambda^k$, the constants $c_d>0$ and $\tilde{c}_d>0$ will be specified. %

In \eqref{equ: lyapunov function}, the last two terms are introduced to deal with the error in \eqref{equ: gradient error}.  With $c_d=\tilde{c}_d=0$, $s=1$, and $\eta=\frac{1}{\beta}$, \eqref{equ: lyapunov function} reduces to the Lyapunov function proposed in \cite{wilson2016lyapunov} for AGD \eqref{equ: Nesterov iterations}.

\begin{theorem}[Stochastic gradient complexity of DLAG]
\label{thm: iteration complexity}
Take Assumptions \ref{assump: smoothness and strong convexity} and \ref{assump: assumption on network topology}.
Take $\gamma=\frac{\alpha\beta\mu^2_{\mathrm{min}}}{288D\|\sqrt{U}\|^4}e^{-\frac{2D}{\sqrt{\kappa}}}$, $c=\frac{\alpha\beta\mu^2_{\mathrm{min}}}{1200 D\|\sqrt{U}\|^4}e^{-\frac{2(D+1)}{\sqrt{\kappa}}}<1$, $\eta=\frac{2}{15}\frac{1}{\beta}$ and $s = 10$, where $\kappa = \frac{\kappa_F}{\zeta_U}$. At each iteration, apply Katyusha with $\mathcal{O}(m+\sqrt{m\kappa_{\mathrm{max}}})$ stochastic gradient evaluations and warm start. Then, we have $\E [L^{k+1}]\leq \left(1-\frac{1}{\sqrt{10\kappa}}\right)\E[L^k]$ for any $k\geq 0$. In order to obtain an approximate solution to \eqref{equ: dual problem} with $\varepsilon-$suboptimality, DLAG needs an iteration complexity of
\begin{align*}
\mathcal{I}_{\mathrm{MDLAG}}(\varepsilon)=\mathcal{O}\left(\sqrt{\kappa}\log(\frac{1}{\varepsilon})\right).
\end{align*}
and a stochastic gradient complexity of
\begin{align*}
\mathcal{G}_{\mathrm{MDLAG}}(\varepsilon)=\mathcal{O}\left(n(m+\sqrt{m\kappa_{\mathrm{max}}})\sqrt{\kappa}\log(\frac{1}{\varepsilon})\right).
\end{align*}
\end{theorem}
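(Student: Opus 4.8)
The plan is to regard the equivalent iteration \eqref{equ: equivalent global formulation} as an \emph{inexact} run of Nesterov's accelerated gradient descent on the dual $G$, where the search direction $\hat\Theta^k\sqrt U$ differs from the exact dual gradient $\nabla G(\xi^k)=\nabla F^*(\xi^k\sqrt U)\sqrt U$ by an error $e^k:=(\hat\Theta^k-\nabla F^*(x^k))\sqrt U$. I would first split this error into an \emph{inexactness} part $(\Theta^k-\nabla F^*(x^k))\sqrt U$ and a \emph{staleness} part $(\hat\Theta^k-\Theta^k)\sqrt U$, so that $\E\|e^k\|^2\le 2\|\sqrt U\|^2\bigl(\E\|\Theta^k-\nabla F^*(x^k)\|^2+\E\|\hat\Theta^k-\Theta^k\|^2\bigr)$. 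The first term is controlled directly by Lemma~\ref{lem: error propagating dynamics}, giving a bound $\sum_{j<k}c^{k-j}\E\|\nabla F^*(x^j)-\nabla F^*(x^{j+1})\|^2$; since each $f_i^*$ is $1/\mu_i$-smooth and $\|x^j-x^{j+1}\|\le\|\sqrt U\|\,\|\xi^j-\xi^{j+1}\|$, this becomes $\le\sum_{j<k}\tfrac{c^{k-j}\|\sqrt U\|^2}{\mu_{\mathrm{min}}^2}\E\|\xi^j-\xi^{j+1}\|^2$. For the staleness part, note that each worker $i$ either refreshes ($\hat\theta^k_i=\theta^k_i$, contributing nothing) or has its lazy condition \eqref{equ: worker's condition 1} satisfied and $\hat\theta^k_i=\hat\theta^{k-1}_i$, in which case $\|\hat\theta^k_i-\theta^k_i\|^2$ is bounded by the right-hand side of \eqref{equ: worker's condition 1}, again a weighted combination of $\|x^j_i-x^{j+1}_i\|^2\le\|\sqrt U\|^2\|\xi^j-\xi^{j+1}\|^2$ over the last $D$ steps (with coefficients of order $\gamma$ and $c$) and over all past steps (with a geometric $c$-tail). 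Summing over $i$, $\E\|e^k\|^2$ is bounded by a small-coefficient linear combination of the past increments $\E\|\xi^{k-d}-\xi^{k-d-1}\|^2$, $d=0,1,\dots,D$, plus an older geometrically damped tail.

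Second, I would run the Lyapunov (estimate-sequence-type) argument for AGD with the potential \eqref{equ: lyapunov function}. For \emph{exact} AGD on the $\alpha$-strongly-convex, $\beta$-smooth function $G$ (the strong convexity being available by Proposition~\ref{prop: properties of G} because $x^k\mathbf{1}=y^k\mathbf{1}=0$ keeps the iterates in $S$), with momentum built from $\sqrt{s\kappa}$, $s=10$, and $\eta=\tfrac{2}{15\beta}$, the clean part $2\eta s\kappa(G(\lambda^k)-G(\xi^{\star}))+\|v^k-\xi^{\star}\|^2$ contracts by $1-1/\sqrt{s\kappa}=1-1/\sqrt{10\kappa}$ up to (i) a negative reserve $-\Omega(\|\xi^{k+1}-\xi^k\|^2)$ coming from the smoothness/strong-convexity inequalities and (ii) an extra $+C\eta s\kappa\,\|e^k\|^2$ after bounding the inner-product cross-terms by Young's inequality. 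The over-damping $s=10$ (instead of $s=1$) is precisely the slack that lets the negative reserve pay for the fresh increment $\|\xi^{k+1}-\xi^k\|^2$ that must be appended when passing from $L^k$ to $L^{k+1}$. I would then substitute the bound on $\E\|e^k\|^2$ from the first step and choose the auxiliary coefficients $c_d$ (bounded-delay window $d\le D$) and $\tilde c_d$ (geometric inexactness tail, decaying like $c$ but inflated by $(1-1/\sqrt{10\kappa})^{-d}$ so the sums telescope under the contraction) so that every term of the step-$k$ error is absorbed into the reserve or into the decay of those auxiliary sums, yielding $\E[L^{k+1}]\le(1-1/\sqrt{10\kappa})\E[L^k]$. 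The specific values $\gamma=\tfrac{\alpha\beta\mu_{\mathrm{min}}^2}{288D\|\sqrt U\|^4}e^{-2D/\sqrt\kappa}$ and $c=\tfrac{\alpha\beta\mu_{\mathrm{min}}^2}{1200D\|\sqrt U\|^4}e^{-2(D+1)/\sqrt\kappa}$ are exactly the normalizations that make these inequalities hold at once: the $D^{-1}$ pays for the $D$ terms in the delay window, and the $e^{-\Theta(D/\sqrt\kappa)}$ pays for the $(1-1/\sqrt{10\kappa})^{-d}$ inflation across a window of length $D$, while $\alpha\beta\mu_{\mathrm{min}}^2/\|\sqrt U\|^4$ cancels the factors accumulated in the error bound.

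Third, once $\E[L^{k+1}]\le(1-1/\sqrt{10\kappa})\E[L^k]$ holds, the complexities follow by routine bookkeeping. Since $L^k\ge 2\eta s\kappa\,(G(\lambda^k)-G(\xi^{\star}))$, we get $\E[G(\lambda^k)-G(\xi^{\star})]\le\frac{(1-1/\sqrt{10\kappa})^k L^0}{2\eta s\kappa}$, and $L^0$ depends only polynomially on the problem constants, so $k=\mathcal O(\sqrt\kappa\log(1/\varepsilon))$ iterations suffice for $\varepsilon$-suboptimality, i.e.\ $\mathcal I_{\mathrm{DLAG}}(\varepsilon)=\mathcal O(\sqrt\kappa\log(1/\varepsilon))$. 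Each iteration each of the $n$ workers performs $\mathcal O\!\bigl((m+\sqrt{m\kappa_{\mathrm{max}}})\log(2\kappa_{\mathrm{max}}/c)\bigr)=\mathcal O(m+\sqrt{m\kappa_{\mathrm{max}}})$ Katyusha stochastic-gradient evaluations (the $\log$ factor, being logarithmic in the problem data and in $D$, is absorbed into $\tilde{\mathcal O}$), so multiplying gives $\mathcal G_{\mathrm{DLAG}}(\varepsilon)=\mathcal O\!\bigl(n(m+\sqrt{m\kappa_{\mathrm{max}}})\sqrt\kappa\log(1/\varepsilon)\bigr)$.

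\textbf{Main obstacle.} The crux is the bookkeeping in the second paragraph: exhibiting a single fixed choice of $c_d,\tilde c_d$ for which the bounded-delay window and the geometric inexactness tail \emph{both} decay at rate $1-1/\sqrt{10\kappa}$ while simultaneously soaking up the step-$k$ error $\|e^k\|^2$, whose own bound reaches $D$ steps into the past. Making all the constants line up --- in particular seeing why $c,\gamma\propto D^{-1}e^{-\Theta(D/\sqrt\kappa)}$ together with $\eta=\tfrac{2}{15\beta}$ and $s=10$ are the correct normalizations --- is where essentially all of the work lies; the AGD descent inequality, the use of Lemma~\ref{lem: error propagating dynamics}, and the final complexity computation are comparatively routine.
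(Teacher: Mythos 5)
Your proposal follows essentially the same route as the paper's proof: the same decomposition of the gradient error into an inexactness part (controlled by Lemma \ref{lem: error propagating dynamics}) and a staleness part (controlled by the lazy condition \eqref{equ: worker's condition 1}), the same Lyapunov function \eqref{equ: lyapunov function} with a bounded-delay window of coefficients $c_d$ and a geometric tail $\tilde c_d$ chosen to telescope under the contraction, and the same final bookkeeping in which the over-damping $s=10$ and the normalizations $c,\gamma\propto D^{-1}e^{-\Theta(D/\sqrt\kappa)}\cdot\alpha\beta\mu_{\mathrm{min}}^2/\|\sqrt U\|^4$ make all coefficients non-positive. This matches the paper's Lemma \ref{lem: gradient error}, Propositions \ref{prop: bound G}--\ref{prop: bound xi}, and the proof of Theorem \ref{thm: iteration complexity full} in Appendix \ref{app: iteration complexity}.
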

\begin{proof}
See Appendix \ref{app: iteration complexity}.
\end{proof}
Compared with DLAG, MDLAG applies a better gossip matrix $P_K(U)$. Because of this, MDLAG enjoys a better computation complexity\footnotemark[1]. 
\footnotetext[1]{Essentially, $K=\lfloor\frac{1}{\sqrt{\zeta(U)}}\rfloor$ guarantees that $P_K(U)$ has a large normalized eigengap of $\zeta(P_K(U))\geq\frac{1}{4}$.}
\begin{theorem}[Stochastic gradient complexity of MDLAG]
\label{thm: MDLAG iteration complexity}
Take Assumptions \ref{assump: smoothness and strong convexity} and \ref{assump: assumption on network topology}.
Take $\gamma=\frac{\alpha'\beta'\mu^2_{\mathrm{min}}}{288D\|\sqrt{P_K(U)}\|^4}e^{-\frac{2D}{\sqrt{\kappa_F}}}$, $c=\frac{\alpha'\beta'\mu^2_{\mathrm{min}}}{1200 D\|\sqrt{P_K(U)}\|^4}e^{-\frac{2(D+1)}{\sqrt{\kappa_F}}}<1$, $\eta=\frac{2}{15}\frac{1}{\beta'}$ and $s = 10$, where $\alpha' = \frac{\sigma_{n-1}(P_K(U))}{L_{\max}}$ and $\beta'=\frac{\sigma_1(P_K(U))}{\mu_{\min}}$. At each iteration, apply Katyusha with $\mathcal{O}(m+\sqrt{m\kappa_{\mathrm{max}}})$ stochastic gradient evaluation and warm start. Then, 
in order to obtain an $\varepsilon-$suboptimal solution to \eqref{equ: dual problem}, MDLAG needs an iteration complexity of
\begin{align}
\label{equ: iteration complexity}
\mathcal{I}_{\mathrm{MDLAG}}(\varepsilon)=\mathcal{O}\left(\sqrt{\kappa_F}\log(\frac{1}{\varepsilon})\right)
\end{align}
in expectation, and a stochastic gradient complexity of
\begin{align}
\label{equ: primal gradient complexity by Katyusha}
\mathcal{G}_{\mathrm{MDLAG}}(\varepsilon)=\mathcal{O}\left(n(m+\sqrt{m\kappa_{\mathrm{max}}})\sqrt{\kappa_F}\log(\frac{1}{\varepsilon})\right).
\end{align}
\end{theorem}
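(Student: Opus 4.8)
The plan is to treat MDLAG (Algorithm \ref{alg: MDLAG global formulation}) as DLAG run with the Chebyshev-accelerated gossip matrix $\tilde U\coloneqq P_K(U)$ in place of $U$, and then transport the analysis of Theorem \ref{thm: iteration complexity} almost verbatim, tracking only how the spectral quantities change. First I would check that $\tilde U$ still satisfies Assumption \ref{assump: assumption on network topology}: by the Chebyshev construction it is symmetric and positive semidefinite, and $\mathrm{null}(\tilde U)=\mathrm{null}(U)=\mathrm{span}\{\mathbf 1\}$, so the consensus constraint $\Theta\sqrt{\tilde U}=0$ coincides with $\Theta\sqrt U=0$. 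Consequently the dual problem \eqref{equ: dual problem} is unchanged under the reparametrization $x=\xi\sqrt{\tilde U}$, and Proposition \ref{prop: properties of G} holds with $\beta'=\sigma_1(\tilde U)/\mu_{\mathrm{min}}$, $\alpha'=\sigma_{n-1}(\tilde U)/L_{\mathrm{max}}$, and $\kappa'=\beta'/\alpha'=\kappa_F/\zeta(\tilde U)$ in place of $\beta,\alpha,\kappa$. Importantly, Lemma \ref{lem: error propagating dynamics} is untouched, since the per-worker subproblem \eqref{equ: subproblem} and its warm-started Katyusha solve never involve the gossip matrix; the bound on $\E\|\Theta^k-\nabla F^*(x^k)\|^2$ carries over word for word.

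Next I would re-run the Lyapunov argument behind Theorem \ref{thm: iteration complexity} using the function \eqref{equ: lyapunov function} with every occurrence of $\kappa$ replaced by $\kappa'$, with $\xi^\star$ the minimizer of the $\tilde U$-dual and $v^k=(1+\sqrt{s\kappa'})\xi^k-\sqrt{s\kappa'}\lambda^k$. The gossip matrix enters that argument only through $\|\sqrt{\tilde U}\|$, $\alpha'$, $\beta'$, and $\kappa'$: the gradient-error term is controlled by Lemma \ref{lem: error propagating dynamics}, the staleness terms by the lazy condition \eqref{equ: worker's condition 1} (whose only problem-dependent constant, $\mu_{\mathrm{min}}$, is unchanged), and the remaining terms by the usual accelerated-gradient descent inequality for the $\beta'$-smooth, $\alpha'$-strongly-convex $G$. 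With the stated choices $\gamma=\frac{\alpha'\beta'\mu_{\mathrm{min}}^2}{288D\|\sqrt{\tilde U}\|^4}e^{-2D/\sqrt{\kappa_F}}$, $c=\frac{\alpha'\beta'\mu_{\mathrm{min}}^2}{1200D\|\sqrt{\tilde U}\|^4}e^{-2(D+1)/\sqrt{\kappa_F}}$, $\eta=\frac{2}{15\beta'}$, and $s=10$, the same coefficients $c_d,\tilde c_d$ make the resulting inequality close and yield $\E[L^{k+1}]\le\bigl(1-\tfrac{1}{\sqrt{10\kappa'}}\bigr)\E[L^k]$; since $L^k\ge 2\eta s\kappa'\,(G(\lambda^k)-G(\xi^\star))$, an $\varepsilon$-suboptimal dual iterate is obtained after $\mathcal{O}(\sqrt{\kappa'}\log(1/\varepsilon))$ iterations.

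It remains to relate $\kappa'$ to $\kappa_F$ and to count gradients. By the defining property of the Chebyshev polynomial used to build $P_K$ (see \cite{scaman2017optimal,young2014iterative}), the choice $K=\lfloor 1/\sqrt{\zeta(U)}\rfloor$ guarantees $\zeta(P_K(U))\ge \tfrac{1}{4}$, so $\kappa'=\kappa_F/\zeta(P_K(U))\le 4\kappa_F$ and $\sqrt{\kappa'}=\mathcal{O}(\sqrt{\kappa_F})$; this gives the iteration complexity \eqref{equ: iteration complexity}. For \eqref{equ: primal gradient complexity by Katyusha}, at each iteration each of the $n$ workers runs $\mathcal{O}(m+\sqrt{m\kappa_{\mathrm{max}}})$ stochastic gradient steps of Katyusha (as required by Lemma \ref{lem: error propagating dynamics}), i.e.\ $\mathcal{O}(n(m+\sqrt{m\kappa_{\mathrm{max}}}))$ per iteration; multiplying by the iteration count yields $\mathcal{O}(n(m+\sqrt{m\kappa_{\mathrm{max}}})\sqrt{\kappa_F}\log(1/\varepsilon))$.

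The main difficulty is not a single sharp estimate but verifying that the whole chain of inequalities proving Theorem \ref{thm: iteration complexity} is genuinely insensitive to the substitution $U\mapsto P_K(U)$ --- in particular that the lazy-communication rule \eqref{equ: worker's condition 1} remains well-posed and plays the same role in the Lyapunov telescoping, even though a skipped message now spreads stale information over a $K$-hop neighborhood instead of a single edge, and that the factor $e^{-2D/\sqrt{\kappa_F}}$ (rather than $e^{-2D/\sqrt{\kappa}}$) is exactly what is needed to absorb the delay terms at the new condition number $\kappa'$. The spectral bound $\zeta(P_K(U))\ge \tfrac{1}{4}$ itself is standard and can simply be cited.
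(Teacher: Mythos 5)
Your proposal is correct and follows essentially the same route as the paper: the paper's proof likewise reduces MDLAG to the DLAG analysis with $U$ replaced by $P_K(U)$ (citing that $P_K(U)$ remains a valid gossip matrix with $\zeta(P_K(U))\ge\tfrac14$, hence $\kappa'\le 4\kappa_F$), reuses Lemma \ref{lem: error propagating dynamics} unchanged, and multiplies the resulting $\mathcal{O}(\sqrt{\kappa'}\log(1/\varepsilon))$ iteration count by the per-iteration Katyusha cost. The only cosmetic difference is that you flag the $e^{-2D/\sqrt{\kappa_F}}$ versus $e^{-2D/\sqrt{\kappa'}}$ mismatch explicitly; since $\kappa'\ge\kappa_F$ this only shrinks the constants $a,b$ below $\tfrac1{48}$, which the paper's argument tolerates.
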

\begin{proof}
See Appendix \ref{app: MDLAG}.
\end{proof}
Next, we provide the communication complexity of Algorithms \ref{alg: global formulation} and \ref{alg: MDLAG global formulation}. First, we define the heterogeneity score function: %
\begin{align}
\label{equ: score function}
h_d(\gamma)=\frac{1}{2|\mathcal{E}|}\sum\limits_{i=1}^n m_i\mathbb{1}\left(H^2_i\leq\frac{\gamma}{d}\right), \  d=1,2,...,D.
\end{align}
Here, $H_i\coloneqq\frac{1/\mu_i}{1/\mu_{\mathrm{min}}}=\frac{\mu_{\mathrm{min}}}{\mu_i}$ is the importance factor of worker $i$ (recall $f^*_i$ is $\frac{1}{\mu_i}-$smooth), $m_i$ is the number of edges connected to worker $i$, $|\mathcal{E}|$ is the total number of edges in network, and $\mathbb{1}$ equals $1$ if $H_i^2\leq\frac{\gamma}{d}$ and $0$ otherwise.

For each $d$, $h_d(\gamma)\in [0,1]$ reflects the percentages of edges that are connected to a worker $i$ with importance factor smaller or equal to $\frac{\gamma}{d}$. In our context, $h_d(\gamma)$ critically lower bounds the fractions of direct edges where communication happens at most $\frac{k}{d+1}$ times until the $k$th iteration.

\begin{theorem}[Communication complexity of DLAG and MDLAG]
Take the assumptions of Theorem \ref{thm: iteration complexity}. In order to obtain an approximate solution to \eqref{equ: dual problem} with $\varepsilon-$suboptimality, Algorithm \ref{alg: global formulation} and \ref{alg: MDLAG global formulation} have communication complexities of
\label{thm: communication complexity}
\begin{align*}
\begin{split}
&\mathcal{C}_{\mathrm{DLAG}}(\varepsilon)\leq\Big( 1 -\sum\limits_{d=1}^D\left(\frac{1}{d}-\frac{1}{d+1}\right)h_d(\gamma)\Big)2|\mathcal{E}|\mathcal{I}_{\mathrm{DLAG}}(\varepsilon)\\
&\mathcal{C}_{\mathrm{MDLAG}}(\varepsilon)\leq\Big(K-\sum\limits_{d=1}^D\left(\frac{1}{d}-\frac{1}{d+1}\right)h_d(\gamma)\Big)2|\mathcal{E}|\mathcal{I}_{\mathrm{MDLAG}}(\varepsilon).
\end{split}
\end{align*}
\end{theorem}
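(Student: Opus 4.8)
The plan is to bound the total number of messages transmitted during the $T:=\mathcal{I}_{\mathrm{DLAG}}(\varepsilon)$ iterations guaranteed by Theorem~\ref{thm: iteration complexity}. At a given iteration, a worker $i$ that fails its lazy condition \eqref{equ: worker's condition 1} (or is forced by $d_i^{k-1}=D$) sends one vector $Q_i^k$ along each of its $m_i$ incident edges, while a worker that passes sends nothing; since each edge has two endpoints, the per-iteration cost is $\sum_{i\ \text{sending}} m_i\le\sum_i m_i = 2|\mathcal{E}|$. Hence $\mathcal{C}_{\mathrm{DLAG}}(\varepsilon)=\sum_{i=1}^n m_i N_i(T)$, where $N_i(T)$ counts the iterations $k\in\{1,\dots,T\}$ at which worker $i$ transmits. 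I will prove the per-worker bound $N_i(T)\le T/(d_i^\star+1)$ with $d_i^\star:=\min\{D,\lfloor\gamma/H_i^2\rfloor\}$; the theorem then follows by bookkeeping: write $\tfrac{1}{d_i^\star+1}=1-\sum_{d=1}^{d_i^\star}\bigl(\tfrac1d-\tfrac1{d+1}\bigr)$, note that for integer $1\le d\le D$ one has $d\le d_i^\star\iff H_i^2\le\gamma/d$, multiply by $m_i$, sum over $i$, and recognize $\sum_i m_i\,\mathbb{1}(H_i^2\le\gamma/d)=2|\mathcal{E}|\,h_d(\gamma)$ from \eqref{equ: score function}.

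The core of the argument is the following claim. \emph{Fix $d\in\{1,\dots,D\}$ and a worker $i$ with $H_i^2\le\gamma/d$. If the delay at the start of iteration $k$ satisfies $d_i^{k-1}<d$, then worker $i$'s lazy condition \eqref{equ: worker's condition 1} holds, so worker $i$ does not transmit at iteration $k$.} Granting this, take $d=d_i^\star$ (when $d_i^\star\ge1$): starting from any transmission — including the virtual one at $k=0$, where $\hat{\theta}^0_i=\theta^0_i=\nabla f^*_i(x^0_i)$ exactly and $d_i^0=0$ — the delay must climb to at least $d_i^\star$ before a new transmission can occur, because a transmission at $k_2$ happens only if \eqref{equ: worker's condition 1} failed (so $d_i^{k_2-1}\ge d_i^\star$ by the contrapositive) or if $d_i^{k_2-1}=D\ge d_i^\star$. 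Thus consecutive transmission times, and the first transmission time, are all at least $d_i^\star+1$ apart, and since they lie in $\{1,\dots,T\}$ at most $\lfloor T/(d_i^\star+1)\rfloor\le T/(d_i^\star+1)$ of them fit; when $d_i^\star=0$ the bound $N_i(T)\le T$ is trivial.

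To prove the claim, write $\ell:=d_i^{k-1}$, so $\hat{\theta}^{k-1}_i=\theta^{k-1-\ell}_i$ with $\ell\le d-1\le D-1$, and split
\begin{align*}
\|\hat{\theta}^{k-1}_i-\theta^k_i\|^2 &\le 3\|\theta^{k-1-\ell}_i-\nabla f^*_i(x^{k-1-\ell}_i)\|^2\\
&\quad+3\|\nabla f^*_i(x^{k-1-\ell}_i)-\nabla f^*_i(x^k_i)\|^2+3\|\nabla f^*_i(x^k_i)-\theta^k_i\|^2,
\end{align*}
which explains the constant $3$ in \eqref{equ: worker's condition 1}. For the middle term, $f^*_i$ is $\tfrac1{\mu_i}$-smooth and Cauchy--Schwarz over the $\ell+1\le d$ increments between $x^{k-1-\ell}_i$ and $x^k_i$ gives $\le\tfrac{\ell+1}{\mu_i^2}\sum_{j=k-1-\ell}^{k-1}\|x^j_i-x^{j+1}_i\|^2$; since $H_i^2\le\gamma/d$ forces $\tfrac{\ell+1}{\mu_i^2}\le\tfrac{d}{\mu_i^2}=\tfrac{d H_i^2}{\mu_{\mathrm{min}}^2}\le\tfrac{\gamma}{\mu_{\mathrm{min}}^2}$ and $k-1-\ell\ge k-D$, this term is at most $3\sum_{j=k-D}^{k-1}\tfrac{\gamma}{\mu_{\mathrm{min}}^2}\|x^j_i-x^{j+1}_i\|^2$, exactly the fourth sum in \eqref{equ: worker's condition 1}. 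For the two error terms, the per-worker form of Lemma~\ref{lem: error propagating dynamics}, $\|\theta^t_i-\nabla f^*_i(x^t_i)\|^2\le\sum_{j=0}^{t-1}\tfrac{c^{t-j}}{\mu_{\mathrm{min}}^2}\|x^j_i-x^{j+1}_i\|^2$, gives at $t=k$ verbatim the second sum of \eqref{equ: worker's condition 1}, while at $t=k-1-\ell$, using $k-1-\ell\ge k-D$ one splits the geometric sum at $j=k-D$ and bounds $c^{k-1-\ell-j}\le c^{k-D-j}$ for $j\le k-D-1$ and $c^{k-1-\ell-j}\le c$ for $k-D\le j\le k-1$, reproducing the first and third sums. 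Adding the three estimates yields precisely the right-hand side of \eqref{equ: worker's condition 1}.

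For MDLAG the reasoning is identical after replacing $U$ by $P_K(U)$ and $\kappa$ by $\kappa'$ (the constants $\gamma,c$ of Theorem~\ref{thm: MDLAG iteration complexity} are chosen so the analogue of the claim holds): one full iteration runs $K$ gossip rounds to apply $P_K(U)$, costing $K\cdot 2|\mathcal{E}|$ if everyone participates, but the lazy rule affects only the first of these rounds, with the same per-worker frequency $N_i(T)\le T/(d_i^\star+1)$, while the other $K-1$ rounds are unchanged; this produces the factor $K-\sum_{d=1}^D(\tfrac1d-\tfrac1{d+1})h_d(\gamma)$. I expect the main obstacle to be the claim — in particular, making the delay/index bookkeeping and the geometric $c$-sums match the four terms of \eqref{equ: worker's condition 1} exactly (the slack terms there are evidently reverse-engineered to absorb precisely these errors), together with the caveat that Lemma~\ref{lem: error propagating dynamics} controls the approximation error only in expectation, so the communication bound is to be read in expectation, consistently with Theorem~\ref{thm: iteration complexity}.
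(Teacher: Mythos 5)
Your proposal is correct and follows essentially the same route as the paper: the same three-term decomposition of $\|\hat{\theta}^{k-1}_i-\theta^k_i\|^2$ bounded via the per-worker form of Lemma~\ref{lem: error propagating dynamics} and the $\frac{1}{\mu_i}$-smoothness of $f^*_i$ with Cauchy--Schwarz, the same ``communicate at most once every $d+1$ iterations when $H_i^2\le\gamma/d$'' claim, and the same telescoping bookkeeping over workers (your $d_i^\star$ formulation is just the paper's partition into subgroups $\mathcal{M}_d$ written differently), including the MDLAG variant where only the first of the $K$ gossip rounds is lazy. Your caveat that the lazy condition is only verified in expectation is also the paper's own (unaddressed) caveat, so there is nothing to add.
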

\begin{proof}
See Appendix \ref{app: communication complexity}.
\end{proof}

\begin{remark}
\label{rem: communication complexity}
If $\gamma=0$, then $\mathcal{C}_{\mathrm{DLAG}}(\varepsilon)$ reduces to SSDA's communication complexity $\mathcal{C}_{SSDA}(\varepsilon)=2|\mathcal{E}|\mathcal{I}_{SSDA}(\varepsilon)$, and $\mathcal{C}_{\mathrm{MDLAG}}(\varepsilon)$ reduces to $\mathcal{C}_{MSDA}(\vareps)=2K|\mathcal{E}|\mathcal{I}_{MSDA}(\varepsilon).$
\end{remark}

\begin{corollary}
\label{coro: compare communication complexity}
Under the settings of Theorem \ref{thm: communication complexity}, we have
\begin{align}
\label{equ: compare communication complexity}
\frac{\mathcal{C}_{\mathrm{DLAG}}(\varepsilon)}{\mathcal{C}_{SSDA}(\varepsilon)}\leq q\coloneqq \sqrt{10}\left(1-\sum\limits_{d=1}^D\left(\frac{1}{d}-\frac{1}{d+1}\right)h_d\left(\gamma\right)\right).
\end{align}
\end{corollary}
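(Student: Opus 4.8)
The plan is to combine three ingredients already established: the communication bound of Theorem~\ref{thm: communication complexity}, the identity $\mathcal{C}_{SSDA}(\varepsilon)=2|\mathcal{E}|\,\mathcal{I}_{SSDA}(\varepsilon)$ recorded in Remark~\ref{rem: communication complexity}, and the iteration count of Theorem~\ref{thm: iteration complexity} together with the classical rate of Nesterov's accelerated gradient descent applied to the $\kappa$-conditioned dual, namely $\mathcal{I}_{SSDA}(\varepsilon)=\Theta(\sqrt{\kappa}\log(1/\varepsilon))$ from \cite{scaman2017optimal}. The structure of the argument is a single division followed by one clean constant comparison.

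First I would invoke Theorem~\ref{thm: communication complexity} to write
\[
\mathcal{C}_{\mathrm{DLAG}}(\varepsilon)\le\Big(1-\sum_{d=1}^D\Big(\tfrac1d-\tfrac1{d+1}\Big)h_d(\gamma)\Big)\,2|\mathcal{E}|\,\mathcal{I}_{\mathrm{DLAG}}(\varepsilon),
\]
and then divide by $\mathcal{C}_{SSDA}(\varepsilon)=2|\mathcal{E}|\,\mathcal{I}_{SSDA}(\varepsilon)$. The factors $2|\mathcal{E}|$ cancel and the heterogeneity prefactor passes through unchanged, leaving
\[
\frac{\mathcal{C}_{\mathrm{DLAG}}(\varepsilon)}{\mathcal{C}_{SSDA}(\varepsilon)}\le\Big(1-\sum_{d=1}^D\Big(\tfrac1d-\tfrac1{d+1}\Big)h_d(\gamma)\Big)\cdot\frac{\mathcal{I}_{\mathrm{DLAG}}(\varepsilon)}{\mathcal{I}_{SSDA}(\varepsilon)}.
\]
It then remains only to show $\mathcal{I}_{\mathrm{DLAG}}(\varepsilon)/\mathcal{I}_{SSDA}(\varepsilon)\le\sqrt{10}$. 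For this, I would start from the contraction $\E[L^{k+1}]\le(1-\tfrac1{\sqrt{10\kappa}})\E[L^k]$ of Theorem~\ref{thm: iteration complexity}; using $-\log(1-x)\ge x$, it follows that $k\approx\sqrt{10\kappa}\log(1/\varepsilon)$ iterations suffice to drive $\E[L^k]$ below $\varepsilon L^0$, so $\mathcal{I}_{\mathrm{DLAG}}(\varepsilon)=\sqrt{10}\,\sqrt{\kappa}\log(1/\varepsilon)\,(1+o(1))$, whereas SSDA contracts at rate $1-\tfrac1{\sqrt{\kappa}}$, giving $\mathcal{I}_{SSDA}(\varepsilon)=\sqrt{\kappa}\log(1/\varepsilon)\,(1+o(1))$. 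The ratio of the two is $\sqrt{10}$ up to lower-order terms, which is exactly the constant $q$ appearing in \eqref{equ: compare communication complexity}.

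The main obstacle is the last comparison step, and it is a matter of bookkeeping rather than new ideas: one must make sure the $\varepsilon$-suboptimality criteria underlying $\mathcal{I}_{\mathrm{DLAG}}$ and $\mathcal{I}_{SSDA}$ are the same object (both are $\varepsilon$-suboptimality of the dual objective $G$), and that the distinct Lyapunov functions — $L^k$ in \eqref{equ: lyapunov function} versus the one of \cite{wilson2016lyapunov} for \eqref{equ: Nesterov iterations} — relate to $G(\lambda^k)-G(\xi^\star)$ only through constant multiplicative factors. Those constants, along with the different initial Lyapunov values, enter merely as an additive $\mathcal{O}(1)$ inside $\log(1/\varepsilon)$, hence are swallowed by the $1+o(1)$ and do not perturb the leading coefficient $\sqrt{10}$. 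Once this is checked, the displayed chain of inequalities gives $\mathcal{C}_{\mathrm{DLAG}}(\varepsilon)/\mathcal{C}_{SSDA}(\varepsilon)\le q$, completing the proof.
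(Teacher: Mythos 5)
Your proposal is correct and follows essentially the same route the paper intends: divide the bound of Theorem~\ref{thm: communication complexity} by the identity $\mathcal{C}_{SSDA}(\varepsilon)=2|\mathcal{E}|\,\mathcal{I}_{SSDA}(\varepsilon)$ from Remark~\ref{rem: communication complexity}, and absorb the ratio of iteration counts $\mathcal{I}_{\mathrm{DLAG}}/\mathcal{I}_{SSDA}\le\sqrt{s}=\sqrt{10}$, which comes directly from the contraction rates $1-\tfrac{1}{\sqrt{10\kappa}}$ (Theorem~\ref{thm: iteration complexity}) versus $1-\tfrac{1}{\sqrt{\kappa}}$ for SSDA. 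Your bookkeeping remark about the Lyapunov constants entering only as an additive $\mathcal{O}(1)$ inside $\log(1/\varepsilon)$ is the right way to justify the leading coefficient.
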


From \eqref{equ: score function} we know that, if there are a large fraction of workers with big $\mu_i$, then $q$ is much smaller than $1$, and DLAG can save a lot of communication compared with SSDA. An illustrative example can be found at Appendix \ref{app: communication complexity}. MDLAG does not save as much communication since it needs $K-1$ full rounds of communication at each iteration.

\section{Experiments}
\label{sec: experiments}

In this section, we compare our DLAG and MDLAG with state-of-the-art decentralized algorithms\footnotemark[1]: COLA \cite{he2018cola}, SSDA, and MSDA %
on the \texttt{heart} dataset from LIBSVM\footnotemark[2].

\footnotetext[1]{Comparison with ADFS \cite{hendrikx2019accelerated} can be found in App. \ref{app: compare with ADFS}.}
\footnotetext[2]{\url{https://www.csie.ntu.edu.tw/ cjlin/libsvmtools/datasets/}}
% In this section, we present more implementation details and numerical results. {\color{red} remove diabetes test, do stability tests on heart (since more computation is saved), put comparison with ADFS here}
We formulate cross-entropy minimization as
\begin{equation*}
    \min_{x\in\Rd} \frac{1}{n_0}\sum_{i=1}^{n_0} \big(-b_i\log\sigma(a_i^Tx)-(1-b_i)\log\sigma(-a_i^Tx)\big) +\lambda\|x\|^2,
\end{equation*}
where $A_0=(a_1,a_2,...,a_{n_0})^T\in\R^{n_0\times d}$, $\lambda=0.01$, and $\sigma(z)=\frac{1}{1+e^{-z}}$.

\begin{enumerate}
    \item The decentralized network is 5x5 2D grid.
    \item Data is unevenly distributed on the network. Theoretically, this leads to smaller importance factors, thus more communication save (see \eqref{equ: score function} and Theorem \ref{thm: communication complexity}). Specifically, for $b>a>0$, we first generate $p_i\sim\text{rand}[a,b], i=1,\ldots, n$. The number of data samples on worker $i$ is proportional to $p_i/(\sum_{j=1}^n p_j)$.
    \item For SSDA and DLAG, stepsize is $\eta = 1/\beta$, for MSDA and MDLAG, stepsize is $\eta=1/\beta'$, where $\beta=\frac{\sigma_1(U)}{\mu_{\min}}$ and $\beta'=\frac{\sigma_1(P_K(U))}{\mu_{\min}}$. We set $s=1$, $\gamma=c=1e-4$ and $D=50$ for our DLAG and MDLAG.
    \item For CoLa, we set the aggregation parameter to be 1, and apply 40 epochs of Nesterov's accelerated gradient descent to solve the local subproblem. 
\end{enumerate}

For cross-entropy minimization, the dual gradient is not immediately available, so we apply 30 epochs of Katyusha to obtain an approximate dual gradient in DLAG and MDLAG. For SSDA and MSDA, it is unclear how accurate the approximate dual gradients should be. We apply Katyusha to solve the subproblem \eqref{equ: subproblem} until reaching an accuracy of $1e-10$. This benefits SSDA and MSDA since by \cite{uribe2018dual}, one should actually apply Katyusha until reaching an accuracy of $\vareps^2=1e-14$ to guarantee overall convergence, where $\vareps=1e-7$ is the final target accuracy.

 \begin{figure}[H]
 \centering
 \includegraphics[width=0.38\textwidth]{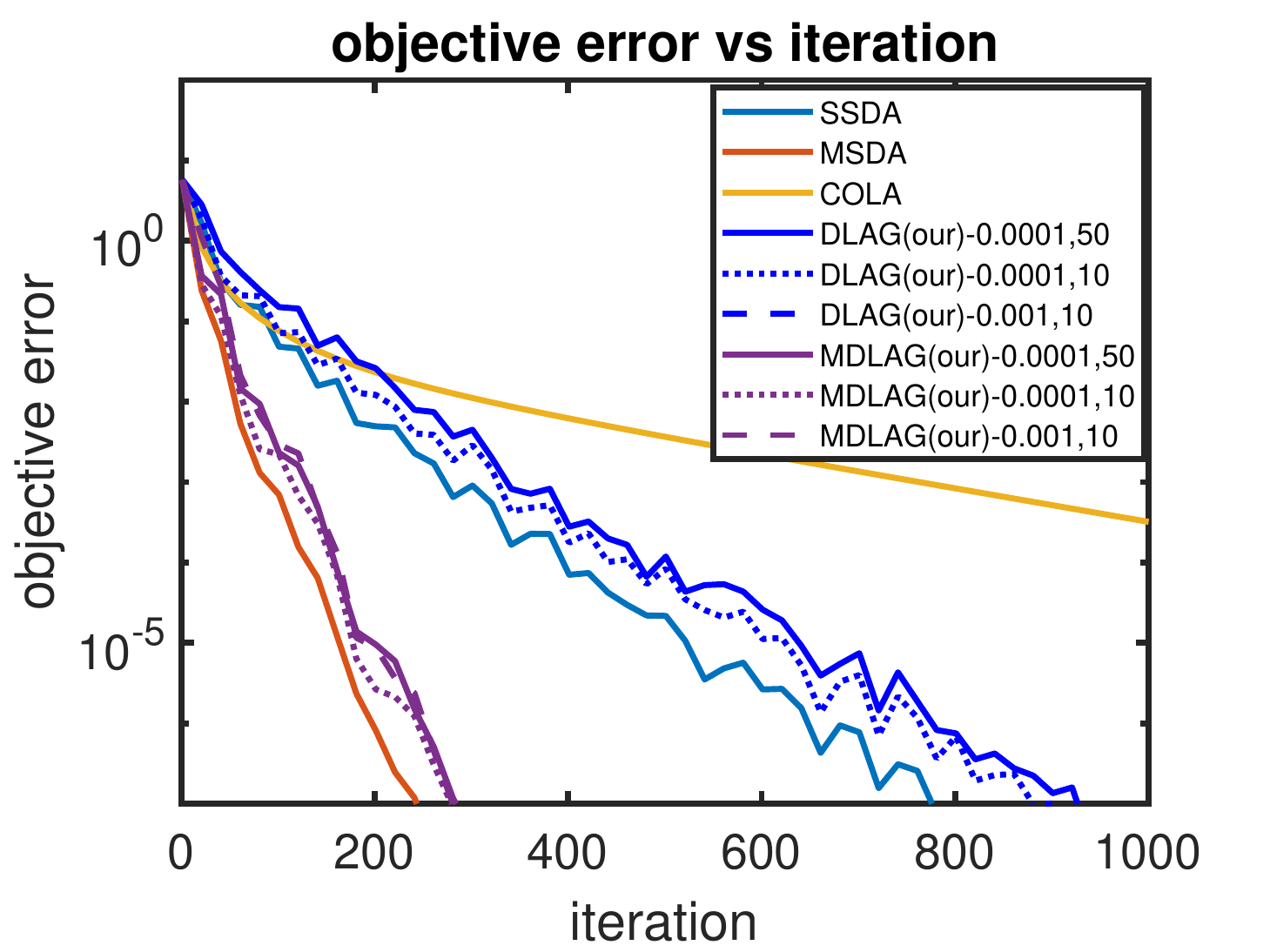}
 \caption{Iteration complexities on \texttt{heart} dataset. DLAG-$0.0001,10$ means DLAG with $\gamma=c=0.0001$ and $D=10$.}
 \label{fig: heart_iter}
 \end{figure}
 \begin{figure}[ht]
 \centering
 \includegraphics[width=0.38\textwidth]{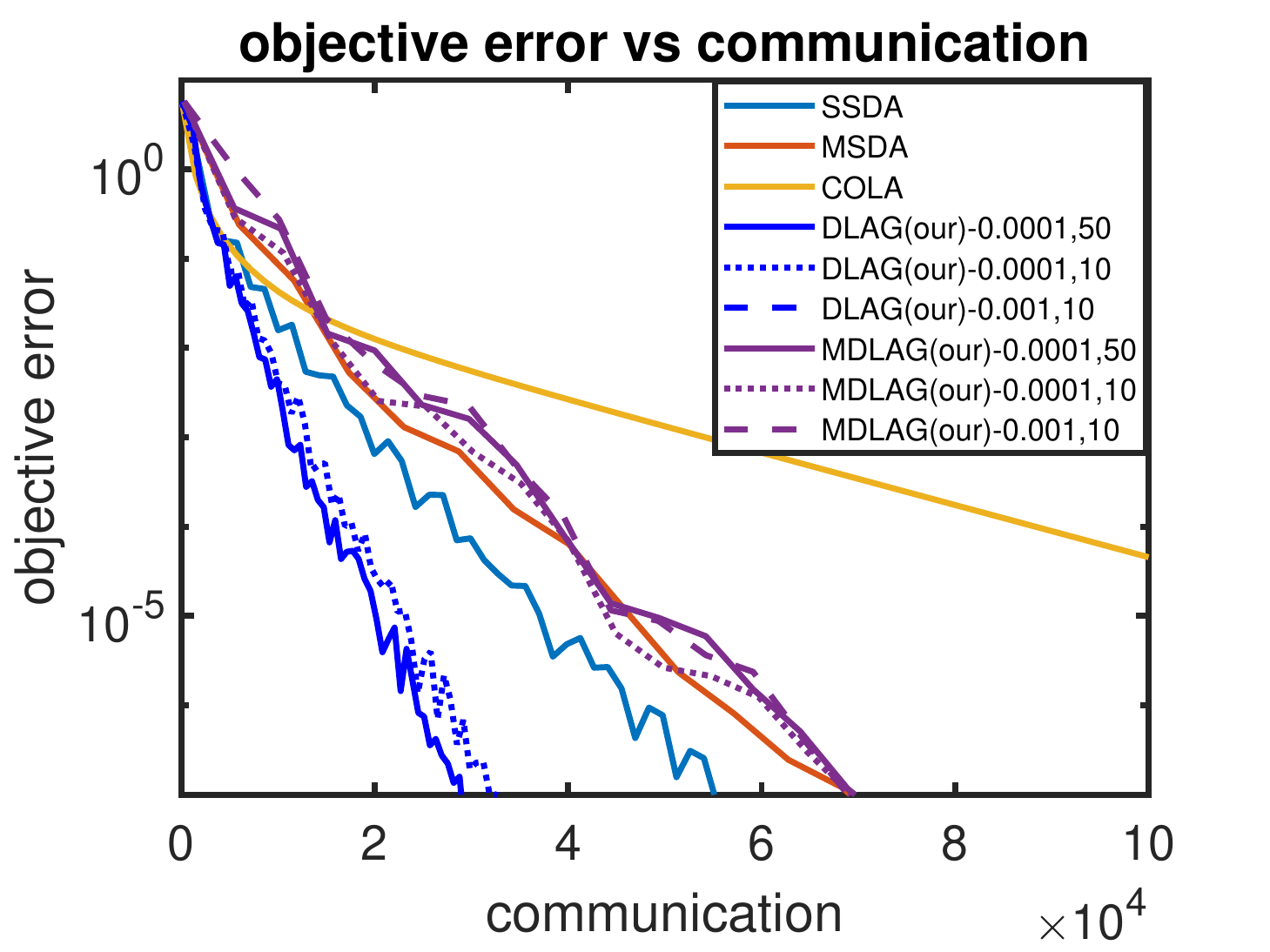}
 \caption{Communication complexities on \texttt{heart} dataset.}
 \label{fig: heart_comm}
 \end{figure}
 \begin{figure}[ht]
 \centering
 \includegraphics[width=0.38\textwidth]{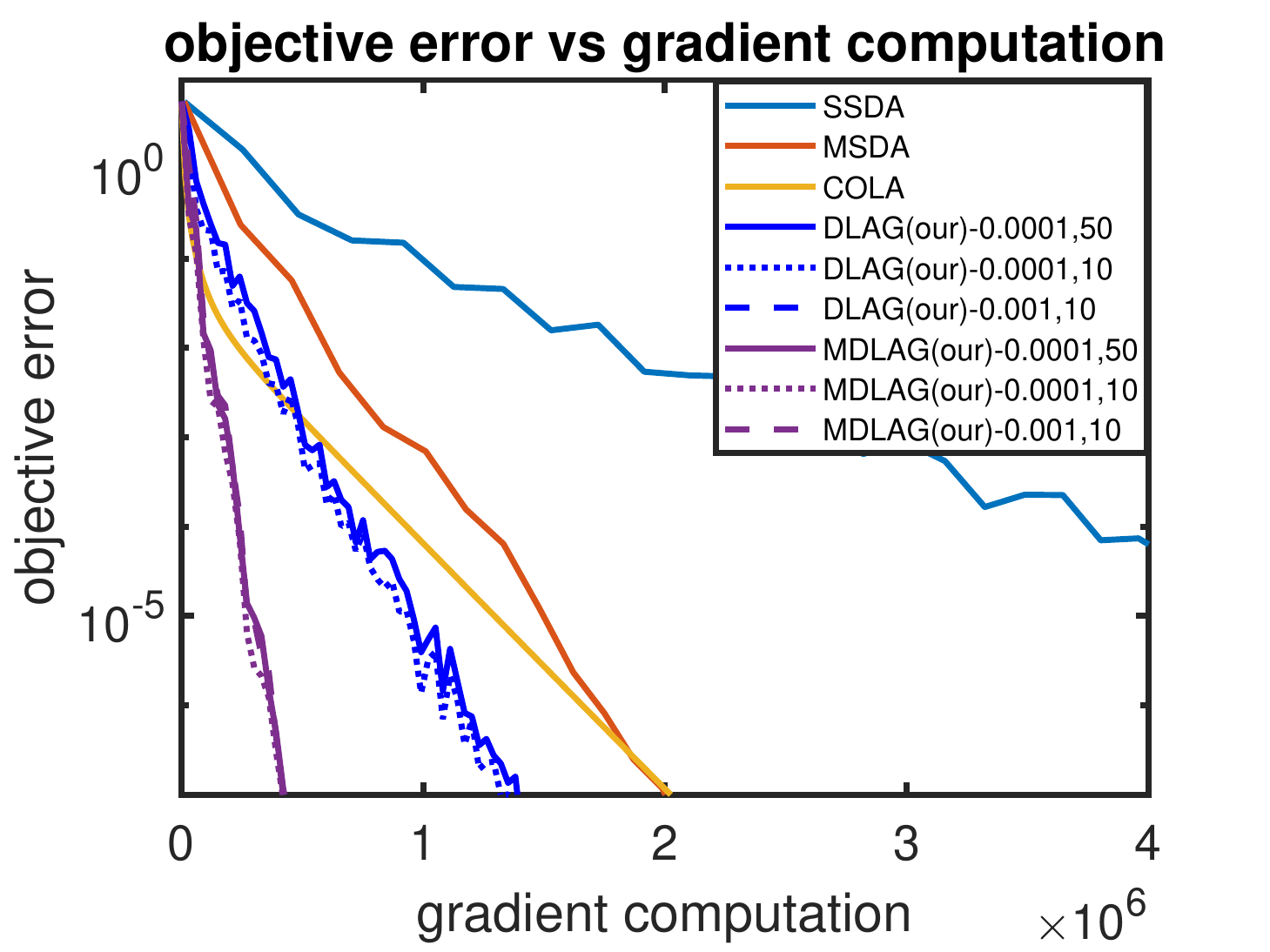}
 \caption{\footnotesize Stochastic gradient complexities on \texttt{heart} dataset.}
 \label{fig: heart_gradient}
 \end{figure}

From Figures \ref{fig: heart_iter}, \ref{fig: heart_comm}, and \ref{fig: heart_gradient}, we can see that the behaviors of tested algorithms match Table \ref{table: comparison}.
\begin{enumerate}
    \item The performance of DLAG and MDLAG is robust to the choice of parameters. 
    %More tests on MDLAG are in App. \ref{app: addition experiments}.
    \item DLAG and MDLAG achieve iteration complexities similar to those of SSDA and MSDA, respectively.
    \item DLAG still uses the least communication (about $40\%$ less than SSDA). %
    \item MDLAG has the smallest gradient complexity (about $80\%$ less than MSDA).
\end{enumerate}

\section{Conclusions and future work}

In this work, we propose DLAG and MDLAG for decentralized machine learning, where computation is saved by applying highly approximate dual gradients, and unnecessary communication can be skipped based on a dynamic criterion. Compared with other methods, DLAG does not rely on extra oracles to compute exact dual gradients or proximal mappings, and successfully reduces communication complexity. All these claims are justified numerically.

There are still open problems to be addressed. For example, can we also apply the worker's lazy condition for all $K$ rounds of communication in MDLAG, so that it can enjoy least amount of computation and communication?

%
%
%

% %\bibliographystyle{IEEEtran}
% % argument is your BibTeX string definitions and bibliography database(s)
% %\bibliography{IEEEabrv,../bib/paper}

% \bibliographystyle{IEEEtran}
% \bibliography{IEEEabrv, references}

% \newpage 
% \twocolumn
\appendix

\subsection{Proof of Lemma \ref{lem: error propagating dynamics}: error propagating dynamics}
\label{app: proof of error dynamics}

\begin{proof}
The inexact dual gradient $\theta^k_i$ is produced by solving the following subproblem by Katyusha:
\begin{align}
\label{equ: subproblem again}
\begin{split}
\theta^k_i &\approx  \argmin_{\theta\in\Rd}\{\frac{1}{m}\sum_{j=1}^m \left(f_{i,j}(\theta)-\langle \theta, x^k_i\rangle\right)\}.
% \argmin_{\theta\in\Rd}\{f_i(\theta)-\langle \theta, x^k_i\rangle\}
\end{split}
\end{align}
We will apply Katyusha such that the inexact dual gradient $\theta^k_i$ satisfies
\begin{align}
\label{equ: Katyusha error}
\mathbb{E}_k\|\theta^k_{i}-\nabla f^*_i(x^k_i)\|^2\leq \frac{c}{2}\|\theta^{k-1}_{i}-\nabla f^*_i(x^k_i)\|^2,
\end{align}
where $\nabla f^*_i(x^k_i)$ is the solution of \eqref{equ: subproblem again}.

Denote $F_i(\theta)=f_i(\theta)-\langle\theta, x^k_i\rangle$. By Theorem 3.1 of \cite{allen2017katyusha}, we know that if Katyusha is warm started at $\theta^{k-1}_i$, 
Katyusha needs 
$$\mathcal{O}\left((m+\sqrt{m \kappa_i})\log(\frac{F_i(\theta^{k-1}_i)-F_i(\nabla f^*_i(x^k_i))}{\vareps_0})\right)$$ 
stochastic gradient evaluations in expectation to reach 
\begin{align}
\label{equ: objective error}
\mathbb{E}_{k}[F_i(\theta^k_i)-F_i(\nabla f^*_i(x^k_i))]\leq \vareps_0.
\end{align}
Here, if we take 
\begin{align}
\label{equ: obj err'}
\vareps_0 = \frac{\mu_i c}{4}\|\theta^{k-1}_i-\nabla f^*_i(x^k_i)\|^2.
\end{align}
then we obtain a stochastic gradient complexity of 
\begin{align*}
&\mathcal{O}\left((m+\sqrt{m \kappa_i})\log(\frac{4}{\mu_i c}\frac{F_i(\theta^{k-1}_i)-F_i(\nabla f^*_i(x^k_i))}{\|\theta^{k-1}_i-\nabla f^*_i(x^k_i)\|^2})\right)\\
% &=\mathcal{O}\left((m+\sqrt{m \kappa_i})\log(\frac{2\kappa_i}{c})\right)\\
&=\mathcal{O}\left((m+\sqrt{m \kappa_i})\log(\frac{2\kappa_{\mathrm{max}}}{c})\right).
\end{align*}
On the other hand, from \eqref{equ: objective error} and \eqref{equ: obj err'} we have 
$$\mathbb{E}_{k}\|\theta^k_i-\nabla f^*_i(x^k_i)\|^2\leq \frac{c}{2}\|\theta^{k-1}_i-\nabla f^*_i(x^k_i)\|^2,$$
which is exactly \eqref{equ: Katyusha error}.
Furthermore, \eqref{equ: Katyusha error} leads to
\begin{align}
\label{equ: temp}
\begin{split}
\mathbb{E}_k\|\theta^k_i-\nabla f^*_i(x^k_i)\|^2 &\leq c\|\theta^{k-1}_{i}-\nabla f^*_i(x^{k-1}_i)\|^2 \\
&\quad + c\|\nabla f^*_i(x^{k}_i)-\nabla f^*_i(x^{k-1}_i)\|^2.
\end{split}
\end{align}
Define $a^k_i = \E\|\theta^k_i-\nabla f^*_i(x^k_i)\|^2$ and $b^k_i=\E\|\nabla f^*_i(x^{k}_i)-\nabla f^*_i(x^{k-1}_i)\|^2$. Then, \eqref{equ: temp} becomes the following recursion:
\[
\frac{a_i^k}{c^k}-\frac{a^{k-1}_i}{c^{k-1}}\leq \frac{b^{k-1}_i}{c^{k-1}},
\]
Since $a^0_i=\|\theta^0_i-\nabla f^*_i(x^0_i)\|^2=0$, we have $\frac{a^k_i}{c^k}\leq \sum_{j=0}^{k-1}\frac{b^j_i}{c^j}$, or equivalently,
\begin{align}
\label{equ: error1}
\E\|\theta^k_i-\nabla f^*_i(x^k_i)\|^2\leq \sum_{j=0}^{k-1}c^{k-j}\E\|\nabla f^*_i(x^{j}_i)-\nabla f^*_i(x^{j+1}_i)\|^2.
\end{align}
The desired result follows.
% Consequently, we have
% \[
% \E\|\Theta^k-\nabla F^*(x^k)\|^2\leq \sum_{j=0}^{k-1}c^{k-j}\E\|\nabla F^*(x^j)-\nabla F^*(x^{j+1})\|^2.
% \]
%
\end{proof}

\subsection{Gradient error bound}
\label{app: proof of gradient error}
In this section, we prove a lemma on the gradient error.

% $\hat{\Theta}^k\sqrt{U}-\nabla F^*({\xi}^{k}\sqrt{U})\sqrt{U}$.
%{\color{red} this proof is not hard, can be shorten}
\begin{lemma}[Gradient error]
\label{lem: gradient error}
Under the same settings as in Lemma \ref{lem: error propagating dynamics}, the difference between $\hat{\Theta}^k\sqrt{U}$ and the true gradient $g^k\coloneqq\nabla F^*({\xi}^{k}\sqrt{U})\sqrt{U}$ satisfies:
\begin{align}
\label{equ: gradient error}
\begin{split}
&\E\|\hat{\Theta}^k\sqrt{U}-g^k\|^2\\
&\leq 6\|\sqrt{U}\|^4\sum_{j=0}^{k-D-1}\frac{c^{k-D-j}}{\mu^2_{\mathrm{min}}}\E\|\xi^j_i-\xi^{j+1}_i\|^2\\
&\quad+8\|\sqrt{U}\|^4\sum_{j=0}^{k-1}\frac{c^{k-j}}{\mu^2_{\mathrm{min}}}\E\|\xi^j-\xi^{j+1}\|^2\\
&\quad+ 6\|\sqrt{U}\|^4\sum_{j=k-D}^{k-1}\frac{c}{\mu^2_{\mathrm{min}}}\E\|\xi^j-\xi^{j+1}\|^2\\
&\quad+6\|\sqrt{U}\|^4\sum_{j=k-D}^{k-1}\frac{\gamma}{\mu^2_{\mathrm{min}}}\E\|\xi^j-\xi^{j+1}\|^2,
\end{split}
\end{align}
\end{lemma}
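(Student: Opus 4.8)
The plan is to decompose the gradient error $\hat{\Theta}^k\sqrt{U}-g^k$ into two pieces: the staleness error (the difference between $\hat{\Theta}^k$ and the \emph{current} inexact dual gradient $\Theta^k$, caused by workers skipping communication) and the inexactness error (the difference between $\Theta^k$ and the true dual gradient $\nabla F^*(x^k)$, controlled by Lemma \ref{lem: error propagating dynamics}). Concretely, write
\begin{align*}
\hat{\Theta}^k\sqrt{U}-g^k &= (\hat{\Theta}^k-\Theta^k)\sqrt{U} + \bigl(\Theta^k-\nabla F^*(x^k)\bigr)\sqrt{U},
\end{align*}
so that $\E\|\hat{\Theta}^k\sqrt{U}-g^k\|^2 \le 2\|\sqrt{U}\|^2\,\E\|\hat{\Theta}^k-\Theta^k\|^2 + 2\|\sqrt{U}\|^2\,\E\|\Theta^k-\nabla F^*(x^k)\|^2$. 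For the inexactness term I would invoke Lemma \ref{lem: error propagating dynamics} directly, then convert $\|\nabla F^*(x^j)-\nabla F^*(x^{j+1})\|$ into $\|x^j-x^{j+1}\|$ using that $F^*$ is $\tfrac{1}{\mu_{\mathrm{min}}}$-smooth (each $f_i^*$ is $\tfrac{1}{\mu_i}$-smooth, so $F^*$ is $\tfrac{1}{\mu_{\mathrm{min}}}$-smooth), and finally pass from $x^j=\xi^j\sqrt{U}$ to $\xi^j$ at the cost of another factor $\|\sqrt{U}\|^2$; this produces the second sum in \eqref{equ: gradient error} (with the factor $8\|\sqrt{U}\|^4$).

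For the staleness term, the key observation is that for each worker $i$, $\hat{\theta}^k_i=\theta^{k-d^k_i}_i$ where $d^k_i\le D$, so $\hat{\theta}^k_i-\theta^k_i=\sum_{\ell=k-d^k_i}^{k-1}(\theta^\ell_i-\theta^{\ell+1}_i)$, a telescoping sum of at most $D$ consecutive one-step differences. Each increment $\theta^\ell_i-\theta^{\ell+1}_i$ is then split as $(\theta^\ell_i-\nabla f_i^*(x^\ell_i)) - (\theta^{\ell+1}_i-\nabla f_i^*(x^{\ell+1}_i)) + (\nabla f_i^*(x^\ell_i)-\nabla f_i^*(x^{\ell+1}_i))$; the first two pieces are bounded by Lemma \ref{lem: error propagating dynamics} applied at indices $\ell$ and $\ell+1$, and the third by smoothness of $f_i^*$ as above. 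Summing over the (at most $D$) skipped iterations and applying Cauchy--Schwarz to convert the squared sum of $D$ terms into $D$ times the sum of squares, one collects: a tail contribution reaching back to index $k-D-1$ (giving the first sum with $c^{k-D-j}$ and factor $6\|\sqrt{U}\|^4$), the same geometric-in-$c$ contribution already present (absorbed into the $8\|\sqrt{U}\|^4$ sum), a contribution from the last $D$ increments weighted by $c$ (the third sum), and a contribution weighted by $\gamma$ from bounding $\|\theta^\ell_i-\theta^{\ell+1}_i\|^2$ against the right-hand side of the lazy condition \eqref{equ: worker's condition 1}, which is what couples $\gamma$ into the bound (the fourth sum). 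Throughout, the passage $x^j=\xi^j\sqrt{U}$ turns one factor $\|\sqrt{U}\|^2$ into $\|\sqrt{U}\|^4$ after the outer $\|\sqrt{U}\|^2$ from the first display.

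The main obstacle is the careful bookkeeping in the staleness term: because a worker's cached gradient can be up to $D$ steps old and the telescoping increments themselves each carry a two-sided inexactness error governed by the \emph{time-varying} geometric sums of Lemma \ref{lem: error propagating dynamics}, the double summation (over skipped steps $\ell$ and over the history index $j\le \ell$) must be reorganized so that the geometric weights $c^{k-j}$, the delayed weights $c^{k-D-j}$, and the flat weights $c$ and $\gamma$ separate cleanly into exactly the four sums appearing in \eqref{equ: gradient error}; keeping the constants $6$ and $8$ (rather than something larger) requires being economical with each Young/Cauchy--Schwarz split. The smoothness conversions and the $x\leftrightarrow\xi$ translation are routine once the combinatorial structure of the delays is pinned down.
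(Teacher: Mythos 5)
Your decomposition into a staleness error $(\hat{\Theta}^k-\Theta^k)\sqrt{U}$ and an inexactness error $(\Theta^k-\nabla F^*(x^k))\sqrt{U}$, and your treatment of the inexactness piece (Lemma \ref{lem: error propagating dynamics}, then $\tfrac{1}{\mu_{\mathrm{min}}}$-smoothness of $F^*$, then $x=\xi\sqrt{U}$), match the paper exactly. The gap is in the staleness piece. The paper's argument there is a one-liner that you miss: by construction of the algorithm, $\hat{\theta}^k_i\neq\theta^k_i$ only when worker $i$ skips communication, which happens only when the lazy condition \eqref{equ: worker's condition 1} holds; in that case $\hat{\theta}^k_i=\hat{\theta}^{k-1}_i$, so $\|\hat{\theta}^k_i-\theta^k_i\|^2=\|\hat{\theta}^{k-1}_i-\theta^k_i\|^2$ is bounded by the right-hand side of \eqref{equ: worker's condition 1} \emph{verbatim} (and is zero otherwise). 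The four sums with weights $c^{k-D-j}$, $c^{k-j}$, $c$, and $\gamma$ are simply inherited from the condition itself --- no telescoping over the delay is needed.

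Your alternative route --- telescoping $\hat{\theta}^k_i-\theta^k_i$ over up to $D$ one-step increments and bounding each increment by Lemma \ref{lem: error propagating dynamics} plus smoothness --- does not deliver the stated bound. First, smoothness of $f_i^*$ gives $\|\nabla f_i^*(x^\ell_i)-\nabla f_i^*(x^{\ell+1}_i)\|^2\leq\tfrac{1}{\mu_i^2}\|x^\ell_i-x^{\ell+1}_i\|^2$, with weight $\tfrac{1}{\mu_i^2}$, not $\tfrac{\gamma}{\mu_{\mathrm{min}}^2}$; since $\gamma$ is chosen exponentially small in Theorem \ref{thm: iteration complexity}, you cannot recover the $\gamma$-weighted fourth sum this way --- that weight enters \emph{only} through the algorithmic test, which your proposal invokes incorrectly (the condition bounds $\|\hat{\theta}^{k-1}_i-\theta^k_i\|^2$, not the one-step increments $\|\theta^\ell_i-\theta^{\ell+1}_i\|^2$). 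Second, the Cauchy--Schwarz step over $D$ increments introduces an extra factor of $D$ that is absent from the constants $6$ and $8$ in \eqref{equ: gradient error}. So the staleness bound as you propose to derive it would be both structurally different and quantitatively weaker than the lemma; the fix is to replace the telescoping entirely by the direct appeal to \eqref{equ: worker's condition 1}.
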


\begin{proof}

First of all, we have 
\begin{align}
\label{equ: bounding gradient error}
\|\hat{\Theta}^k\sqrt{U}-g^k\|^2&\leq 2\|(\hat{\Theta}^k-\Theta^k)\sqrt{U}\|^2\nonumber\\
&\quad+2\|\Theta^k\sqrt{U}-\nabla F^*(x^k)\sqrt{U}\|^2.
\end{align}
% Recall that the worker $i$'s lazy condition reads
% \begin{align*}
% %
% \begin{split}
% \|\hat{\theta}^{k-1}_i-\theta^k_i\|^2  & \leq  3\sum_{j=0}^{k-D-1}\frac{c^{k-D-j}}{\mu^2_{\mathrm{min}}}\|x^j_i-x^{j+1}_i\|^2\\
% &\quad+3\sum_{j=0}^{k-1}\frac{c^{k-j}}{\mu^2_{\mathrm{min}}}\|x^j_i-x^{j+1}_i\|^2\\
% &\quad + 3\sum_{j=k-D}^{k-1}\frac{c}{\mu^2_{\mathrm{min}}}\|x^j_i-x^{j+1}_i\|^2\\
% &\quad+3\sum_{j=k-D}^{k-1}\frac{\gamma}{\mu^2_{\mathrm{min}}}\|x^j_i-x^{j+1}_i\|^2.
% \end{split}
% \end{align*}
In DLAG, if worker $i$'s lazy condition \eqref{equ: worker's condition 1} is satisfied, then $\hat{\theta}^k_i=\hat{\theta}^{k-1}_i$ (skipping communication), and $\hat{\theta}^k_i={\theta}^{k}_i$ (perform communication) otherwise. As a result, we have
\begin{align*}
\begin{split}
\|\hat{\theta}^{k}_i-\theta^k_i\|^2  & \leq  3\sum_{j=0}^{k-D-1}\frac{c^{k-D-j}}{\mu^2_{\mathrm{min}}}\|x^j_i-x^{j+1}_i\|^2\\
&\quad +3\sum_{j=0}^{k-1}\frac{c^{k-j}}{\mu^2_{\mathrm{min}}}\|x^j_i-x^{j+1}_i\|^2\\
&\quad + 3\sum_{j=k-D}^{k-1}\frac{c}{\mu^2_{\mathrm{min}}}\|x^j_i-x^{j+1}_i\|^2\\
&\quad +3\sum_{j=k-D}^{k-1}\frac{\gamma}{\mu^2_{\mathrm{min}}}\|x^j_i-x^{j+1}_i\|^2.
\end{split}
\end{align*}
In view of this, the first term on the right hand side of \eqref{equ: bounding gradient error} can be then bounded as
\begin{align}
\label{equ: bounding the first term}
\begin{split}
&2\|(\hat{\Theta}^k-\Theta^k)\sqrt{U}\|^2\\
&\leq 2\|\sqrt{U}\|^2\|\hat{\Theta}^k-\Theta^k\|^2\\
% &\leq 6\|\sqrt{U}\|^2\sum_{j=0}^{k-D-1}\frac{c^{k-D-j}}{\mu^2_{\mathrm{min}}}\|x^j-x^{j+1}\|^2\\
% &\quad+6\|\sqrt{U}\|^2\sum_{j=0}^{k-1}\frac{c^{k-j}}{\mu^2_{\mathrm{min}}}\|x^j-x^{j+1}\|^2\\
% &+ 6\|\sqrt{U}\|^2\sum_{j=k-D}^{k-1}\frac{c}{\mu^2_{\mathrm{min}}}\|x^j-x^{j+1}\|^2\\
% &\quad+6\|\sqrt{U}\|^2\sum_{j=k-D}^{k-1}\frac{\gamma}{\mu^2_{\mathrm{min}}}\|x^j-x^{j+1}\|^2\\
&\leq 6\|\sqrt{U}\|^4\sum_{j=0}^{k-D-1}\frac{c^{k-D-j}}{\mu^2_{\mathrm{min}}}\|\xi^j_i-\xi^{j+1}_i\|^2\\
&\quad+6\|\sqrt{U}\|^4\sum_{j=0}^{k-1}\frac{c^{k-j}}{\mu^2_{\mathrm{min}}}\|\xi^j-\xi^{j+1}\|^2\\
&\quad+ 6\|\sqrt{U}\|^4\sum_{j=k-D}^{k-1}\frac{c}{\mu^2_{\mathrm{min}}}\|\xi^j-\xi^{j+1}\|^2\\
&\quad +6\|\sqrt{U}\|^4\sum_{j=k-D}^{k-1}\frac{\gamma}{\mu^2_{\mathrm{min}}}\|\xi^j-\xi^{j+1}\|^2,
\end{split}
\end{align}
where we have applied $x=\xi\sqrt{U}$ in the second inequality.

To bound the second term on the right hand side of \eqref{equ: bounding gradient error}, we can apply Lemma \ref{lem: error propagating dynamics} in the following way:
\begin{align}
\label{equ: bounding the second term}
\begin{split}
&2\|\Theta^k\sqrt{U}-\nabla F^*(x^k)\sqrt{U}\|^2\\
&\leq 2\|\sqrt{U}\|^2\sum_{j=0}^{k-1}c^{k-j}\E\|\nabla F^*(x^j)-\nabla F^*(x^{j+1})\|^2\\
&\leq 2\|\sqrt{U}\|^2\sum_{j=0}^{k-1}\frac{c^{k-j}}{\mu^2_{\mathrm{min}}}\E\|x^j-x^{j+1}\|^2\\
&\leq 2\|\sqrt{U}\|^4\sum_{j=0}^{k-1}\frac{c^{k-j}}{\mu^2_{\mathrm{min}}}\E\|\xi^j-\xi^{j+1}\|^2
\end{split}
\end{align}
where we have applied the $\frac{1}{\mu_{\mathrm{min}}}-$smoothness of $F^*$ in the first inequality, and $x=\xi\sqrt{U}$ in the second inequality. 

Finally, combining \eqref{equ: bounding gradient error}, \eqref{equ: bounding the first term}, and \eqref{equ: bounding the second term} yields the desired result.
% \begin{align*}
% &\|\hat{\Theta}^k\sqrt{U}-g^k\|^2\\
% &\leq 6\|\sqrt{U}\|^4\sum_{j=0}^{k-D-1}\frac{c^{k-D-j}}{\mu^2_{\mathrm{min}}}\E\|\xi^j-\xi^{j+1}\|^2\\
% &\quad +8\|\sqrt{U}\|^4\sum_{j=0}^{k-1}\frac{c^{k-j}}{\mu^2_{\mathrm{min}}}\E\|\xi^j-\xi^{j+1}\|^2\\
% &\quad+ 6\|\sqrt{U}\|^4\sum_{j=k-D}^{k-1}\frac{c}{\mu^2_{\mathrm{min}}}\E\|\xi^j-\xi^{j+1}\|^2\\
% &\quad +6\|\sqrt{U}\|^4\sum_{j=k-D}^{k-1}\frac{\gamma}{\mu^2_{\mathrm{min}}}\E\|\xi^j-\xi^{j+1}\|^2.
% \end{align*}
%
%
%
%
%
%
%
%
%
%
%
%
%
%
%
%
%
%
%
%
%
%
\end{proof}
\subsection{Preliminary propositions}
\label{app: Preliminary propositions}
First, let us define $\Delta v^k \coloneqq v^k-\xi^{\star}$ and $\Delta\xi^k \coloneqq\xi^k-\xi^{\star}.$
% \begin{align*}
% \Delta v^k &= v^k-\xi^{\star}, \\
% \Delta\xi^k &=\xi^k-\xi^{\star}.
% \end{align*}
Then, the Lyapunov function $L^k$ in \eqref{equ: lyapunov function} can be written as %
\begin{align*}
L^k=2\eta s\kappa\left(G(\lambda^k)-G(\xi^{\star})\right)+\|\Delta v^k\|^2+A^k+\tilde{A}^k,
\end{align*}
where 
\begin{align}
    v^k&=\xi^k+\sqrt{s\kappa}(\xi^k-\lambda^k),\label{equ: def of v}\\
    A^k&=\sum\limits_{d=1}^{D}c_d\|\xi^{k+1-d}-\xi^{k-d}\|^2,\label{equ: def of A^k}\\
    \tilde{A}^k&=\sum\limits_{d=1}^{k}\tilde{c}_d\|\xi^{k+1-d}-\xi^{k-d}\|^2.\label{equ: def of tilde A^k}
\end{align}
We want to obtain $L^{k+1}-(1-\frac{1}{\sqrt{s\kappa}}) L^k\leq 0$ for some $s\geq 1$. For this purpose, we bound the terms in $L^{k+1}$ in the following propositions. Their proofs can be found in Appendices \ref{app: proof of G bound}, \ref{app: proof of v bound}, and \ref{app: proof of xi bound}, respectively.

\begin{proposition}
\label{prop: bound G}
We have
\begin{align*}
G(\lambda^{k+1})&\leq G(\xi^k)-\left(\eta-\frac{\eta^2\beta}{2}-\frac{\eta^2\sqrt{s\kappa}\alpha}{2\rho}\right)\|\hat{\Theta}^k\sqrt{U}\|^2\\
& \quad+\frac{\rho}{2\sqrt{s\kappa}\alpha}\|\hat{\Theta}^k\sqrt{U}-g^k\|^2.
\end{align*}
\end{proposition}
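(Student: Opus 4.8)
The plan is to establish Proposition~\ref{prop: bound G} by a standard descent-lemma argument applied to the inexact accelerated update $\lambda^{k+1}=\xi^k-\eta\,\hat\Theta^k\sqrt{U}$, treating $\hat\Theta^k\sqrt{U}$ as an inexact surrogate for the true dual gradient $g^k=\nabla F^*(\xi^k\sqrt U)\sqrt U$. First I would invoke $\beta$-smoothness of $G$ (Proposition~\ref{prop: properties of G}, part~1) to write
\begin{align*}
G(\lambda^{k+1})&\leq G(\xi^k)+\langle\nabla G(\xi^k),\lambda^{k+1}-\xi^k\rangle+\frac{\beta}{2}\|\lambda^{k+1}-\xi^k\|^2\\
&=G(\xi^k)-\eta\langle g^k,\hat\Theta^k\sqrt U\rangle+\frac{\eta^2\beta}{2}\|\hat\Theta^k\sqrt U\|^2,
\end{align*}
since $\nabla G(\xi^k)=\nabla F^*(\xi^k\sqrt U)\sqrt U=g^k$ and $\lambda^{k+1}-\xi^k=-\eta\,\hat\Theta^k\sqrt U$.

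Next I would handle the cross term $-\eta\langle g^k,\hat\Theta^k\sqrt U\rangle$ by writing $g^k=\hat\Theta^k\sqrt U-(\hat\Theta^k\sqrt U-g^k)$, so that
\[
-\eta\langle g^k,\hat\Theta^k\sqrt U\rangle=-\eta\|\hat\Theta^k\sqrt U\|^2+\eta\langle\hat\Theta^k\sqrt U-g^k,\hat\Theta^k\sqrt U\rangle.
\]
The inner product is then split with Young's inequality, $\eta\langle\hat\Theta^k\sqrt U-g^k,\hat\Theta^k\sqrt U\rangle\leq\frac{\eta^2\sqrt{s\kappa}\,\alpha}{2\rho}\|\hat\Theta^k\sqrt U\|^2+\frac{\rho}{2\sqrt{s\kappa}\,\alpha}\|\hat\Theta^k\sqrt U-g^k\|^2$, choosing the weighting so the error term carries exactly the coefficient $\frac{\rho}{2\sqrt{s\kappa}\,\alpha}$ that will later be matched against the $\alpha$-strong-convexity gain in the Lyapunov analysis (here $\rho>0$ is a free parameter to be fixed later). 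Collecting terms gives
\[
G(\lambda^{k+1})\leq G(\xi^k)-\Bigl(\eta-\frac{\eta^2\beta}{2}-\frac{\eta^2\sqrt{s\kappa}\,\alpha}{2\rho}\Bigr)\|\hat\Theta^k\sqrt U\|^2+\frac{\rho}{2\sqrt{s\kappa}\,\alpha}\|\hat\Theta^k\sqrt U-g^k\|^2,
\]
which is the claimed bound.

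There is no serious obstacle here: the only subtlety is bookkeeping the parameter $\rho$ in the Young split so that the resulting coefficients align with those needed when this proposition is combined with Propositions on $\|\Delta v^{k+1}\|^2$ and the difference terms $A^{k+1},\tilde A^{k+1}$ to close the Lyapunov recursion. One point worth a line of justification is that $\nabla G(\xi^k)=g^k$, i.e. that differentiating $G(\xi)=F^*(\xi\sqrt U)$ yields $\nabla F^*(\xi\sqrt U)\sqrt U$ by the chain rule together with the symmetry of $\sqrt U$; and that all quantities involved lie in the subspace $S=\{\xi:\xi\mathbf 1=0\}$, so the smoothness constant $\beta$ from Proposition~\ref{prop: properties of G} applies. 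I would also note that the inequality is deterministic given $\xi^k$ and $\hat\Theta^k$, so no expectation is needed at this stage; expectations enter only later when Lemma~\ref{lem: gradient error} is used to bound $\|\hat\Theta^k\sqrt U-g^k\|^2$.
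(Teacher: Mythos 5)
Your proof is correct and follows exactly the paper's argument: apply the $\beta$-smoothness descent inequality to $\lambda^{k+1}=\xi^k-\eta\hat\Theta^k\sqrt{U}$, rewrite the cross term via $g^k=\hat\Theta^k\sqrt{U}-(\hat\Theta^k\sqrt{U}-g^k)$, and split the residual inner product with Young's inequality using the weight $\chi=\frac{\rho}{\eta\sqrt{s\kappa}\alpha}$. Your added remarks on $\nabla G(\xi^k)=g^k$ and the deterministic nature of the bound are accurate and consistent with how the paper later takes expectations.
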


\begin{proposition}
We have
\label{prop: bound v}
\begin{align*}
&\|\Delta v^{k+1}\|^2\\
&\leq\left(1-\frac{1}{\sqrt{s\kappa}}\right)\|\Delta v^k\|^2\\
&\ \ \ +\left(\frac{1}{\sqrt{s\kappa}}-\eta\alpha\sqrt{s\kappa}\left(1-\frac{1}{\rho}\right)\right)\|\xi^k-\xi^{\star}\|^2\\
    &\ \ \ \ +\left(1-\frac{1}{\sqrt{s\kappa}}\right)\left(-\frac{1}{\sqrt{s\kappa}}+\eta\alpha\left(\frac{\sqrt{s\kappa}-1}{\rho}-1\right)\right)\|v^k-\xi^k\|^2\\
    &\ \ \ \ +2\eta s\kappa\left(\left(G(\xi^{\star})-G(\xi^k)\right)+(1-\frac{1}{\sqrt{s\kappa}})\left(G(\lambda^k)-G(\xi^{\star})\right)\right)\\
    &\ \ \ \ +\eta^2s\kappa\|\hat{\Theta}^k\sqrt{U}\|^2+2\rho\frac{\eta\sqrt{s\kappa}}{\alpha}\|\hat{\Theta}^k\sqrt{U}-g^k\|^2.
\end{align*}
\end{proposition}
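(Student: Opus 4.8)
\textbf{Proof proposal for Proposition \ref{prop: bound v}.}

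The plan is to track the canonical momentum vector $v^k = \xi^k + \sqrt{s\kappa}(\xi^k - \lambda^k)$ through one iteration of the inexact accelerated scheme \eqref{equ: equivalent global formulation} and bound $\|\Delta v^{k+1}\|^2$ in terms of $\|\Delta v^k\|^2$, $\|\Delta\xi^k\|^2$, $\|v^k - \xi^k\|^2$, a function-value gap, and the error term $\|\hat\Theta^k\sqrt U - g^k\|^2$. First I would derive a clean recursion for $v^{k+1}$: substituting the definition of $\xi^{k+1}$ and $\lambda^{k+1}$ from \eqref{equ: equivalent global formulation}, one expresses $v^{k+1}$ as an affine combination of $v^k$, $\xi^k$, and the step $\hat\Theta^k\sqrt U$. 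The standard identity for Nesterov's method with parameter $\sqrt{s\kappa}$ is that $v^{k+1} = (1-\tfrac{1}{\sqrt{s\kappa}})v^k + \tfrac{1}{\sqrt{s\kappa}}\xi^k - \eta\sqrt{s\kappa}\,\hat\Theta^k\sqrt U$ (up to the exact bookkeeping of the momentum coefficient $\frac{\sqrt{s\kappa}-1}{\sqrt{s\kappa}+1}$), so I would verify this carefully and keep the inexact direction $\hat\Theta^k\sqrt U$ rather than $g^k$.

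Next I would expand the square $\|\Delta v^{k+1}\|^2 = \|(1-\tfrac{1}{\sqrt{s\kappa}})\Delta v^k + \tfrac{1}{\sqrt{s\kappa}}\Delta\xi^k - \eta\sqrt{s\kappa}\,\hat\Theta^k\sqrt U\|^2$. Writing $\hat\Theta^k\sqrt U = g^k + (\hat\Theta^k\sqrt U - g^k)$ and applying Young's inequality with the free parameter $\rho$ to split off the error term gives the last summand $2\rho\frac{\eta\sqrt{s\kappa}}{\alpha}\|\hat\Theta^k\sqrt U - g^k\|^2$ and leaves a ``clean'' part involving only $g^k = \nabla F^*(\xi^k\sqrt U)\sqrt U = \nabla G(\xi^k)$. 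For the clean part, I would use convexity of the cross terms: the inner product $\langle \Delta v^k, \Delta\xi^k\rangle$ is handled by the identity $2\langle a,b\rangle = \|a\|^2 + \|b\|^2 - \|a-b\|^2$, which produces the $\|v^k - \xi^k\|^2$ term; and the inner product against $g^k = \nabla G(\xi^k)$ is controlled via $\alpha$-strong convexity of $G$ on $S$ (Proposition \ref{prop: properties of G}) to generate $-\eta\alpha\sqrt{s\kappa}(\cdots)\|\xi^k-\xi^\star\|^2$ and the function gaps $G(\xi^\star) - G(\xi^k)$, together with $\beta$-smoothness to retain $\eta^2 s\kappa\|\hat\Theta^k\sqrt U\|^2$. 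The $(1-\tfrac{1}{\sqrt{s\kappa}})(G(\lambda^k) - G(\xi^\star))$ term enters because $v^k - \xi^k = \sqrt{s\kappa}(\xi^k - \lambda^k)$ relates the momentum displacement to $\lambda^k$, and a convexity inequality $G(\lambda^k) \geq G(\xi^k) + \langle\nabla G(\xi^k), \lambda^k - \xi^k\rangle + \tfrac{\alpha}{2}\|\lambda^k - \xi^k\|^2$ lets me trade the linear term for the gap.

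The main obstacle will be the careful algebraic bookkeeping: keeping track of the precise coefficients multiplying $\|\xi^k - \xi^\star\|^2$ and $\|v^k - \xi^k\|^2$ so that they come out exactly as stated (with the $1-\tfrac{1}{\sqrt{s\kappa}}$ prefactors and the $\rho$-dependent corrections), since a single misrouted cross term changes these coefficients. I would organize the computation by first proving the $v^{k+1}$ recursion as a standalone algebraic identity, then expanding the norm and grouping terms by the quadratic they contribute to, deferring the choice of $\rho$ (and later $s$, $\eta$) to the step where Propositions \ref{prop: bound G} and \ref{prop: bound v} are combined. Everything else — Young's inequality, the three-point identity, and the strong-convexity/smoothness bounds on $G$ — is routine once the recursion is in hand.
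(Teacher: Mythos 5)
Your plan matches the paper's proof essentially step for step: the same recursion $\Delta v^{k+1}=(1-\tfrac{1}{\sqrt{s\kappa}})\Delta v^k+\tfrac{1}{\sqrt{s\kappa}}\Delta\xi^k-\eta\sqrt{s\kappa}\,\hat\Theta^k\sqrt U$, the parallelogram identity to produce $\|v^k-\xi^k\|^2$, the split $\hat\Theta^k\sqrt U=g^k+(\hat\Theta^k\sqrt U-g^k)$ with Young's inequality in $\rho$, and $\alpha$-strong convexity applied to both $\langle g^k,\Delta\xi^k\rangle$ and $\langle g^k,\xi^k-\lambda^k\rangle$ to generate the two function gaps. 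One small correction: the $\eta^2 s\kappa\|\hat\Theta^k\sqrt U\|^2$ term comes directly from expanding the square of the recursion, so no $\beta$-smoothness is needed in this proposition (it is used only in Proposition~\ref{prop: bound G}).
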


\begin{proposition}
\label{prop: bound xi}
Let $c_d=\sum\limits_{j=d}^{D}(1-\frac{1}{\sqrt{s\kappa}})^{d-j-1}q$ and $q>0$ for $d=1,2,...,D$. Then, we have $c_{d+1}-(1-\frac{1}{\sqrt{s\kappa}})c_d = -q$ and
\begin{align*}
A^{k+1}-(1-\frac{1}{\sqrt{s\kappa}})A^k
&\leq 2c_1\left(\frac{\sqrt{s\kappa}-1}{\sqrt{s\kappa}(\sqrt{s\kappa}+1)}\right)^2\|v^k-\xi^k\|^2\\
&\quad +2c_1\left(\frac{2\eta\sqrt{s\kappa}}{\sqrt{s\kappa}+1}\right)^2\|\hat{\Theta}^k\sqrt{U}\|^2\\
&\ \ \ \ -\sum\limits_{d=1}^{D}q\|\xi^{k+1-d}-\xi^{k-d}\|^2.
\end{align*}
Similarly, we have
\begin{align*}
&\tilde{A}^{k+1}-(1-\frac{1}{\sqrt{s\kappa}})\tilde{A}^k\\
&\leq 2\tilde{c}_1\left(\frac{\sqrt{s\kappa}-1}{\sqrt{s\kappa}(\sqrt{s\kappa}+1)}\right)^2\|v^k-\xi^k\|^2\\
&\quad+2\tilde{c}_1\left(\frac{2\eta\sqrt{s\kappa}}{\sqrt{s\kappa}+1}\right)^2\|\hat{\Theta}^k\sqrt{U}\|^2\\
&\ \ \ \ +\sum\limits_{d=1}^{k}(\tilde{c}_{d+1}-(1-\frac{1}{\sqrt{s\kappa}})\tilde{c}_d)\|\xi^{k+1-d}-\xi^{k-d}\|^2.
\end{align*}
\end{proposition}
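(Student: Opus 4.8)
The plan is to reduce both inequalities to a single algebraic identity for the one-step difference $\xi^{k+1}-\xi^k$, combined with the telescoping structure that is built into the coefficients $c_d$.

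First I would establish the key one-step identity. The iteration \eqref{equ: equivalent global formulation} reads $\lambda^{k+1}=\xi^k-\eta\hat{\Theta}^k\sqrt{U}$ and $\xi^{k+1}=(1+\frac{\sqrt{s\kappa}-1}{\sqrt{s\kappa}+1})\lambda^{k+1}-\frac{\sqrt{s\kappa}-1}{\sqrt{s\kappa}+1}\lambda^k$. Substituting the first into the second, using $1+\frac{\sqrt{s\kappa}-1}{\sqrt{s\kappa}+1}=\frac{2\sqrt{s\kappa}}{\sqrt{s\kappa}+1}$, and then replacing $\xi^k-\lambda^k=\frac{1}{\sqrt{s\kappa}}(v^k-\xi^k)$ from \eqref{equ: def of v} gives
\begin{align*}
\xi^{k+1}-\xi^k=\frac{\sqrt{s\kappa}-1}{\sqrt{s\kappa}(\sqrt{s\kappa}+1)}(v^k-\xi^k)-\frac{2\eta\sqrt{s\kappa}}{\sqrt{s\kappa}+1}\hat{\Theta}^k\sqrt{U}.
\end{align*}
Applying $\|a+b\|^2\leq 2\|a\|^2+2\|b\|^2$ produces exactly the two squared-norm terms in the statement, with prefactors $2\left(\frac{\sqrt{s\kappa}-1}{\sqrt{s\kappa}(\sqrt{s\kappa}+1)}\right)^2$ and $2\left(\frac{2\eta\sqrt{s\kappa}}{\sqrt{s\kappa}+1}\right)^2$.

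Next I would verify the coefficient recursion and use it to telescope $A^k$. Writing $\omega:=1-\frac{1}{\sqrt{s\kappa}}$, the definition $c_d=\sum_{j=d}^{D}\omega^{d-j-1}q$ gives $c_{d+1}-\omega c_d=-q$ directly, since forming $\omega c_d$ and subtracting cancels all overlapping geometric terms except the surviving $j=d$ term; in particular $c_D=\omega^{-1}q$, i.e. $\omega c_D=q$. Shifting the index by one in $A^{k+1}=\sum_{d=1}^{D}c_d\|\xi^{k+2-d}-\xi^{k+1-d}\|^2$ isolates the $d=1$ term $c_1\|\xi^{k+1}-\xi^k\|^2$ and realigns the rest against $\omega A^k$; the middle differences collapse to $-q\sum_{d=1}^{D-1}\|\xi^{k+1-d}-\xi^{k-d}\|^2$, while the boundary term $-\omega c_D\|\xi^{k+1-D}-\xi^{k-D}\|^2=-q\|\xi^{k+1-D}-\xi^{k-D}\|^2$ fills the missing $d=D$ slot, yielding
\begin{align*}
A^{k+1}-\omega A^k=c_1\|\xi^{k+1}-\xi^k\|^2-q\sum_{d=1}^{D}\|\xi^{k+1-d}-\xi^{k-d}\|^2.
\end{align*}
Substituting the one-step bound on $\|\xi^{k+1}-\xi^k\|^2$ from the first step then gives the claimed inequality for $A$.

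Finally, the bound for $\tilde{A}^k$ follows the same reindexing but is simpler, since $\tilde{A}^k$ has no fixed truncation at $D$ and the $\tilde{c}_d$ are left unspecified. Shifting the index in $\tilde{A}^{k+1}=\sum_{d=1}^{k+1}\tilde{c}_d\|\xi^{k+2-d}-\xi^{k+1-d}\|^2$ isolates $\tilde{c}_1\|\xi^{k+1}-\xi^k\|^2$ and leaves $\sum_{d=1}^{k}(\tilde{c}_{d+1}-\omega\tilde{c}_d)\|\xi^{k+1-d}-\xi^{k-d}\|^2$ uncontracted, because no choice of $\tilde{c}_d$ is imposed; applying the same one-step bound to $\tilde{c}_1\|\xi^{k+1}-\xi^k\|^2$ gives the second inequality. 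I expect the main obstacle to be purely bookkeeping: arranging the index shifts so the $d=D$ boundary term completes the sum through the identity $\omega c_D=q$, and keeping the momentum-coefficient algebra in the one-step identity exact so the prefactors reproduce the statement verbatim.
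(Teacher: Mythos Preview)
Your proposal is correct and mirrors the paper's proof essentially step for step: the paper also derives the identity $\xi^{k+1}-\xi^k=\frac{1}{1+\sqrt{s\kappa}}\big((1-\frac{1}{\sqrt{s\kappa}})(v^k-\xi^k)-2\eta\sqrt{s\kappa}\hat{\Theta}^k\sqrt{U}\big)$, applies $\|a+b\|^2\le 2\|a\|^2+2\|b\|^2$, and uses the same index shift together with $c_{d+1}-\omega c_d=-q$ and $\omega c_D=q$ to telescope $A^{k+1}-\omega A^k$, with the $\tilde{A}$ bound handled identically.
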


\subsubsection{A toolkit for proof}
Before diving into the details of proof of our main theory, let us first list some useful equalities and inequalities.

We will use $g^k=\nabla F^*(\xi^k\sqrt{U})\sqrt{U}$ throughout the rest of the proof.

\begin{enumerate}
    \item For $v^k$ defined in \eqref{equ: def of v}, we have
    \begin{align}
        v^{k+1}&=(1+\sqrt{s\kappa})\xi^{k+1}-\sqrt{s\kappa}\lambda^{k+1} \nonumber\\
        &=(1+\sqrt{s\kappa})\left(\lambda^{k+1}+\frac{\sqrt{s\kappa}-1}{\sqrt{s\kappa}+1}(\lambda^{k+1}-\lambda^k)\right) \nonumber\\
        &\quad-\sqrt{s\kappa}\lambda^{k+1} \nonumber\\
        &=\sqrt{s\kappa}\lambda^{k+1}-(\sqrt{s\kappa}-1)\lambda^k \nonumber\\
        &=\sqrt{s\kappa}(\xi^k-\eta\hat{\Theta}^k\sqrt{U})\nonumber\\
        &\quad -(\sqrt{s\kappa}-1)\left(\left(1+\frac{1}{\sqrt{s\kappa}}\right)\xi^k-\frac{1}{\sqrt{s\kappa}}v^k\right) \nonumber\\
        &=\left(1-\frac{1}{\sqrt{s\kappa}}\right)v^k+\frac{1}{\sqrt{s\kappa}}\xi^k-\eta\sqrt{s\kappa}\hat{\Theta}^k\sqrt{U}.\label{equ: v}
    \end{align}
    \item (Young's inequality) For any $a, b\in\RR$ and $\chi>0$, we have 
    \begin{align}\label{equ: young}
        ab\leq\frac{\chi a^2}{2}+\frac{b^2}{2\chi}.
    \end{align}
    \item By Proposition \ref{prop: properties of G}, for any $\xi_1, \xi_2\in\R^{d\times n}$, we have 
    \begin{align}
        G(\xi_2)
        &\leq G(\xi_1)+\dotp{\nabla G(\xi_1), \xi_2-\xi_1}+\frac{\beta}{2}\|\xi_2-\xi_1\|^2, \label{equ: smooth}\\
        G(\xi_2)&\geq G(\xi_1)+\dotp{\nabla G(\xi_1), \xi_2-\xi_1}+\frac{\alpha}{2}\|\xi_2-\xi_1\|^2. \label{equ: strongly convex}
    \end{align}
    \item For any $0\leq r\leq 1$ and $x,y\in\Rn$, we have
    \begin{align}
    \label{equ: parallelogram}
        \|(1-r)x+r y\|^2&=(1-r)\|x\|^2+r\|y\|^2\nonumber\\
        &\quad-r(1-r)\|x-y\|^2.
    \end{align}
    \item For any $0\leq x\leq\frac{1}{2},y\geq 0$, we have 
    \begin{align}
    \label{equ: to exp}
    (1-x)^{-y}\leq e^{2xy}.
    \end{align}
\end{enumerate}

\subsubsection{Proof of Proposition \ref{prop: bound G}}
\label{app: proof of G bound}
We have
\begin{align*}
    &G(\lambda^{k+1})=G(\xi^k-\eta\hat{\Theta}^k\sqrt{U})\\
    &\overset{(a)}{\leq} G(\xi^k)-\eta\dotp{g^k,\hat{\Theta}^k\sqrt{U}}+\frac{\eta^2\beta}{2}\|\hat{\Theta}^k\sqrt{U}\|^2\\
    &=G(\xi^k)-\eta\|\hat{\Theta}^k\sqrt{U}\|^2+\eta\dotp{\hat{\Theta}^k\sqrt{U}-g^k,\hat{\Theta}^k\sqrt{U}}\\
    &\quad +\frac{\eta^2\beta}{2}\|\hat{\Theta}^k\sqrt{U}\|^2\\
    &\overset{(b)}{\leq} G(\xi^k)-\left(\eta-\frac{\eta^2\beta}{2}-\frac{\eta^2\sqrt{s\kappa}\alpha}{2\rho}\right)\|\hat{\Theta}^k\sqrt{U}\|^2\\
    &\quad +\frac{\rho}{2\sqrt{s\kappa}\alpha}\|\hat{\Theta}^k\sqrt{U}-g^k\|^2,
\end{align*}
where $(a)$ follows from the smoothness of $G$ in \eqref{equ: smooth}, $(b)$ follows from \eqref{equ: young} with $\chi=\frac{\rho}{\eta\sqrt{s\kappa}\alpha}$, and $\rho>0$ will be determined later.

\subsubsection{Proof of Proposition \ref{prop: bound v}}
\label{app: proof of v bound}

Equation \eqref{equ: def of v} implies that
\begin{align*}
    \Delta v^{k+1}=\left(1-\frac{1}{\sqrt{s\kappa}}\right)\Delta v^k+\frac{1}{\sqrt{s\kappa}}\Delta\xi^k-\eta\sqrt{s\kappa}\hat{\Theta}^k\sqrt{U}.
\end{align*}
Therefore,
\begin{align*}
    &\|\Delta v^{k+1}\|^2\\
    &=\left\|\left(1-\frac{1}{\sqrt{s\kappa}}\right)\Delta v^k+\frac{1}{\sqrt{s\kappa}}\Delta\xi^k\right\|^2+\eta^2s\kappa\|\hat{\Theta}^k\sqrt{U}\|^2\\
    &\quad-2\eta\sqrt{s\kappa}\left\langle\left(1-\frac{1}{\sqrt{s\kappa}}\right)\Delta v^k+\frac{1}{\sqrt{s\kappa}}\Delta\xi^k, \hat{\Theta}^k\sqrt{U}\right\rangle\\
    &\overset{(a)}{=}\left(1-\frac{1}{\sqrt{s\kappa}}\right)\|\Delta v^k\|^2+\frac{1}{\sqrt{s\kappa}}\|\Delta\xi^k\|^2\\
    &\quad -\left(1-\frac{1}{\sqrt{s\kappa}}\right)\frac{1}{\sqrt{s\kappa}}\|v^k-\xi^k\|^2+\eta^2s\kappa\|\hat{\Theta}^k\sqrt{U}\|^2\\
    &\ \ \ -2\eta\sqrt{s\kappa}\left\langle\left(1-\frac{1}{\sqrt{s\kappa}}\right)\Delta v^k+\frac{1}{\sqrt{s\kappa}}\Delta\xi^k, \hat{\Theta}^k\sqrt{U}\right\rangle\\
    &\overset{(b)}{=}\left(1-\frac{1}{\sqrt{s\kappa}}\right)\|\Delta v^k\|^2+\frac{1}{\sqrt{s\kappa}}\|\Delta\xi^k\|^2\\
    &\quad -\left(1-\frac{1}{\sqrt{s\kappa}}\right)\frac{1}{\sqrt{s\kappa}}\|v^k-\xi^k\|^2+\eta^2s\kappa\|\hat{\Theta}^k\sqrt{U}\|^2\\
    &\ \ \ -2\eta\sqrt{s\kappa}\dotp{\Delta \xi^k+(\sqrt{s\kappa}-1)(\xi^k-\lambda^k), \hat{\Theta}^k\sqrt{U}}\\
\end{align*}
where $(a)$ follows from \eqref{equ: parallelogram} and $\Delta v^k-\Delta \xi^k = v^k-\xi^k$, $(b)$ follows from the definition of $v^k$ in \eqref{equ: def of v}.

For $-\dotp{\Delta \xi^k, \hat{\Theta}^k\sqrt{U}}$ we have
\begin{align*}
    &-\dotp{\Delta\xi^k,\hat{\Theta}^k\sqrt{U}}\\
    &=-\dotp{\Delta\xi^k, g^k}-\dotp{\Delta\xi^k, \hat{\Theta}^k\sqrt{U}-g^k}\\
    &\overset{(c)}{\leq} G(\xi^{\star})-G(\xi^k)-\frac{\alpha}{2}\|\Delta\xi^k\|^2-\dotp{\Delta\xi^k, \hat{\Theta}^k\sqrt{U}-g^k}\\
    &\overset{(d)}{\leq} G(\xi^{\star})-G(\xi^k)-\frac{\alpha}{2}\|\Delta\xi^k\|^2+\frac{\rho}{2\alpha}\|\hat{\Theta}^k\sqrt{U}-g^k\|^2\\
    &\quad +\frac{\alpha}{2\rho}\|\Delta\xi^k\|^2.\\
\end{align*}
where (c) follows from the strong convexity of $G$ in \eqref{equ: strongly convex}, and (d) follows from Young's inequality \eqref{equ: young} with $\chi=\frac{\rho}{\alpha}$. 

Similarly, for $-\dotp{\xi^k-\lambda^k,\hat{\Theta}^k\sqrt{U}}$ we have
\begin{align*}
    &-\dotp{\xi^k-\lambda^k,\hat{\Theta}^k\sqrt{U}}\\
    &=-\dotp{\xi^k-\lambda^k,g^k}-\dotp{\xi^k-\lambda^k,\hat{\Theta}^k\sqrt{U}-g^k}\\
    &\leq G(\lambda^k)-G(\xi^k)-\frac{\alpha}{2}\|\xi^k-\lambda^k\|^2\\
    &\quad +\frac{\rho}{2(\sqrt{s\kappa}-1)\alpha}\|\hat{\Theta}^k\sqrt{U}-g^k\|^2\\
    &\quad +\frac{(\sqrt{s\kappa}-1)\alpha}{2\rho}\|\xi^k-\lambda^k\|^2\\
    &=G(\lambda^k)-G(\xi^k)\\
    &\quad-\frac{1}{s\kappa}\left(\frac{\alpha}{2}-\frac{(\sqrt{s\kappa}-1)\alpha}{2\rho}\right)\|v^k-\xi^k\|^2\\
    &\quad +\frac{\rho}{2(\sqrt{s\kappa}-1)\alpha}\|\hat{\Theta}^k\sqrt{U}-g^k\|^2.
\end{align*}
where the last equality follows from \eqref{equ: def of v}.

As a result,
\begin{align*}
    &\|\Delta v^{k+1}\|^2\\
    &\leq\left(1-\frac{1}{\sqrt{s\kappa}}\right)\|\Delta v^k\|^2+\frac{1}{\sqrt{s\kappa}}\|\Delta\xi^k\|^2\\
    &\quad -\left(1-\frac{1}{\sqrt{s\kappa}}\right)\frac{1}{\sqrt{s\kappa}}\|v^k-\xi^k\|^2+\eta^2s\kappa\|\hat{\Theta}^k\sqrt{U}\|^2\\
    &\ \ \ \  +2\eta\sqrt{s\kappa}\bigg(G(\xi^{\star})-G(\xi^k)-\frac{\alpha}{2}\left(1-\frac{1}{\rho}\right)\|\Delta\xi^k\|^2\\ &\quad\quad\quad\quad\quad +\frac{\rho}{2\alpha}\|\hat{\Theta}^k\sqrt{U}-g^k\|^2\bigg)\\
    &\ \ \ \ +2\eta\sqrt{s\kappa}(\sqrt{s\kappa}-1)\\
    &\quad\times\bigg(G(\lambda^k)-G(\xi^k)
    % \\
    % &\quad\quad\quad\quad\quad\quad\quad\quad\quad\quad
    -\frac{1}{s\kappa}\frac{\alpha}{2}\left(1-\frac{\sqrt{s\kappa}-1}{\rho}\right)\|v^k-\xi^k\|^2\\
    &\quad\quad\quad+\frac{\rho}{2(\sqrt{s\kappa}-1)\alpha}\|\hat{\Theta}^k\sqrt{U}-g^k\|^2\bigg)\\
    &=\left(1-\frac{1}{\sqrt{s\kappa}}\right)\|\Delta v^k\|^2+\left(\frac{1}{\sqrt{s\kappa}}-\eta\alpha\sqrt{s\kappa}\left(1-\frac{1}{\rho}\right)\right)\|\Delta\xi^k\|^2\\
    &+\left(1-\frac{1}{\sqrt{s\kappa}}\right)\left(-\frac{1}{\sqrt{s\kappa}}+\eta\alpha\left(\frac{\sqrt{s\kappa}-1}{\rho}-1\right)\right)\|v^k-\xi^k\|^2\\
    &+2\eta s\kappa\left(\left(G(\xi^{\star})-G(\xi^k)\right)+(1-\frac{1}{\sqrt{s\kappa}})\left(G(\lambda^k)-G(\xi^{\star})\right)\right)\\
    &+\eta^2s\kappa\|\hat{\Theta}^k\sqrt{U}\|^2+2\rho\frac{\eta\sqrt{s\kappa}}{\alpha}\|\hat{\Theta}^k\sqrt{U}-g^k\|^2.
\end{align*}

\subsubsection{Proof of Proposition \ref{prop: bound xi}}
\label{app: proof of xi bound}

Since $c_d=\sum\limits_{j=d}^{D}r^{d-j-1}q$, where $r= 1-\frac{1}{\sqrt{s\kappa}}$ and $q>0$, we have
\begin{align*}
    &A_{k+1}-r A_k\\
    &=c_1\|\xi^{k+1}-\xi^k\|^2+\sum\limits_{d=1}^{D-1}(c_{d+1}-r c_d)\|\xi^{k+1-d}-\xi^{k-d}\|^2\\
    &\quad-r c_{D}\|\xi^{k+1-D}-\xi^{k-D}\|^2\\
    &=c_1\|\xi^{k+1}-\xi^k\|^2-\sum\limits_{d=1}^{D}q\|\xi^{k+1-d}-\xi^{k-d}\|^2.\\
\end{align*}

To deal with $\xi^{k+1}-\xi^k$, we can write
\begin{align*}
    &\xi^{k+1}-\xi^k\\
    &=\lambda^{k+1}-\xi^k+\frac{\sqrt{s\kappa}-1}{\sqrt{s\kappa}+1}(\lambda^{k+1}-\xi^k+\xi^k-\lambda^k)\\
    &=-\eta\hat{\Theta}^k\sqrt{U}+\frac{\sqrt{s\kappa}-1}{\sqrt{s\kappa}+1}(-\eta\hat{\Theta}^k\sqrt{U})\\
    &\quad+\frac{\sqrt{s\kappa}-1}{\sqrt{s\kappa}+1}\frac{1}{\sqrt{s\kappa}}(v^k-\xi^k)\\
    &=\frac{1}{1+\sqrt{s\kappa}}\left((1-\frac{1}{\sqrt{s\kappa}})(v^k-\xi^k)-2\eta\sqrt{s\kappa}\hat{\Theta}^k\sqrt{U}\right).
\end{align*}
Therefore,
\begin{align*}
    &A_{k+1}-rA_k\\
    &= c_1\left\|\frac{1}{1+\sqrt{s\kappa}}\left((1-\frac{1}{\sqrt{s\kappa}})(v^k-\xi^k)-2\eta\sqrt{s\kappa}\hat{\Theta}^k\sqrt{U}\right)\right\|^2\\
    &\quad-\sum\limits_{d=1}^{D}q\|\xi^{k+1-d}-\xi^{k-d}\|^2\\
    &\leq 2c_1\left(\frac{\sqrt{s\kappa}-1}{\sqrt{s\kappa}(\sqrt{s\kappa}+1)}\right)^2\|v^k-\xi^k\|^2\\
    &\quad +2c_1\left(\frac{2\eta\sqrt{s\kappa}}{\sqrt{s\kappa}+1}\right)^2\|\hat{\Theta}^k\sqrt{U}\|^2-\sum\limits_{d=1}^{D}q\|\xi^{k+1-d}-\xi^{k-d}\|^2.
\end{align*}
Similarly, we also have
\begin{align*}
    &\tilde{A}_{k+1}-r\tilde{A}_k\\
    &=
    \tilde{c}_1\|\xi^{k+1}-\xi^k\|^2+\sum\limits_{d=1}^{k}(\tilde{c}_{d+1}-r\tilde{c}_d)\|\xi^{k+1-d}-\xi^{k-d}\|^2\\
    &= \tilde{c}_1\left\|\frac{1}{1+\sqrt{s\kappa}}\left((1-\frac{1}{\sqrt{s\kappa}})(v^k-\xi^k)-2\eta\sqrt{s\kappa}\hat{\Theta}^k\sqrt{U}\right)\right\|^2\\
    &\quad+\sum\limits_{d=1}^{k}(\tilde{c}_{d+1}-r\tilde{c}_d)\|\xi^{k+1-d}-\xi^{k-d}\|^2\\
    &\leq 2\tilde{c}_1\left(\frac{\sqrt{s\kappa}-1}{\sqrt{s\kappa}(\sqrt{s\kappa}+1)}\right)^2\|v^k-\xi^k\|^2\\
    &\quad+2c_1\left(\frac{2\eta\sqrt{s\kappa}}{\sqrt{s\kappa}+1}\right)^2\|\hat{\Theta}^k\sqrt{U}\|^2\\
    &\quad+\sum\limits_{d=1}^{k}(\tilde{c}_{d+1}-r\tilde{c}_d)\|\xi^{k+1-d}-\xi^{k-d}\|^2.
\end{align*}

\subsection{Stochastic Gradient complexity}
\label{app: iteration complexity}
In order to prove Theorem \ref{thm: iteration complexity} directly follows, we prove a slightly more general result stated in Theorem \ref{thm: iteration complexity full}.
\begin{theorem}
\label{thm: iteration complexity full}
Take Assumptions \ref{assump: smoothness and strong convexity} and \ref{assump: assumption on network topology}, and let $\sqrt{\kappa_{\mathrm{max}}}\log(\frac{128\kappa^3_{\mathrm{max}}}{c})$, and 
\[\eta=\frac{\sqrt{2+\frac{1}{12(a+b)}}}{1+\sqrt{2+\frac{1}{12(a+b)}}}\frac{1}{1+24(a+b)(2+\sqrt{2+\frac{1}{12(a+b)}})}\frac{1}{\beta},
\] 
\[s = \frac{2+\sqrt{2+\frac{1}{12(a+b)}}}{\sqrt{2+\frac{1}{12(a+b)}}}\left(1+24(a+b)\left(2+\sqrt{2+\frac{1}{12(a+b)}}\right)\right),
\] 
where $a=\frac{6\|\sqrt{U}\|^4}{\alpha\beta\mu^2_{\mathrm{min}}}\gamma D e^{\frac{2D}{\sqrt{\kappa}}}$ and $ b=\frac{25\|\sqrt{U}\|^4}{\alpha\beta\mu^2_{\mathrm{min}}}c D e^{\frac{2D}{\sqrt{\kappa}}}$, where $\kappa = \frac{\kappa_F}{\zeta_U}$. Assume that $\kappa>2$. At each iteration, let the subproblem \eqref{equ: subproblem} be solved by Katyusha with $\mathcal{O}(m+\sqrt{m\kappa_{\mathrm{max}}})$ stochastic gradient evaluations. Then, we have $\E[L^{k+1}]\leq \left(1-\frac{1}{\sqrt{s\kappa}}\right)\E[L^k]$ for any $k\geq 0$. Therefore, in order to obtain an approximate solution to \eqref{equ: dual problem} with $\varepsilon-$suboptimality, Algorithm \ref{alg: global formulation} has an iteration complexity of
\begin{align*}
\mathcal{I}_{\mathrm{DLAG}}(\varepsilon)=\mathcal{O}\left(\sqrt{s\kappa}\log(\frac{1}{\varepsilon})\right),
\end{align*}
in expectation, and a stochastic gradient complexity of 
\begin{align*}
\mathcal{G}_{\mathrm{DLAG}}(\varepsilon)=\mathcal{O}\left((m+\sqrt{m\kappa_{\mathrm{max}}})\sqrt{s\kappa}\log(\frac{1}{\varepsilon})\right).
\end{align*}
in expectation.
\end{theorem}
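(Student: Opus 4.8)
The plan is to establish the one-step contraction $\E[L^{k+1}]\le(1-\tfrac{1}{\sqrt{s\kappa}})\E[L^k]$ and then read off the two complexities by unrolling. I would assemble the inequality by adding the four building blocks already proved: Proposition~\ref{prop: bound G} scaled by $2\eta s\kappa$, Proposition~\ref{prop: bound v}, and the two estimates of Proposition~\ref{prop: bound xi} for $A^{k+1}-(1-\tfrac{1}{\sqrt{s\kappa}})A^k$ and $\tilde A^{k+1}-(1-\tfrac{1}{\sqrt{s\kappa}})\tilde A^k$, to get an upper bound on $L^{k+1}-(1-\tfrac{1}{\sqrt{s\kappa}})L^k$. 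In that sum the objective-gap terms telescope exactly into $2\eta s\kappa(1-\tfrac{1}{\sqrt{s\kappa}})(G(\lambda^k)-G(\xi^{\star}))$ (the $G(\xi^k)$ contributions cancel), and the coefficient of $\|\Delta v^k\|^2$ is already $1-\tfrac{1}{\sqrt{s\kappa}}$, so the remaining task is to force the coefficients of $\|\hat\Theta^k\sqrt U\|^2$, $\|v^k-\xi^k\|^2$ and $\|\Delta\xi^k\|^2$ to be nonpositive and to show the telescoped delay terms dominate the gradient error.

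Next I would substitute the bound of Lemma~\ref{lem: gradient error} for every $\|\hat\Theta^k\sqrt U-g^k\|^2$ (it enters Propositions~\ref{prop: bound G} and~\ref{prop: bound v} with coefficients $\tfrac{\rho}{2\sqrt{s\kappa}\alpha}$ and $2\rho\tfrac{\eta\sqrt{s\kappa}}{\alpha}$, the latter times $2\eta s\kappa$) and pass to expectations. This produces a weighted sum of $\E\|\xi^j-\xi^{j+1}\|^2$ of four types: a geometric family $c^{k-j}$ over $0\le j\le k-1$, the same family shifted by $D$ over $0\le j\le k-D-1$, and two families with constant weight ($\propto c$ and $\propto\gamma$) over the window $k-D\le j\le k-1$. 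The auxiliary terms of $L^k$ are built to absorb precisely these: with $c_d$ as in Proposition~\ref{prop: bound xi}, the window part $A^k$ contributes a clean $-\sum_{d=1}^D q\|\xi^{k+1-d}-\xi^{k-d}\|^2$ that swallows the two windowed families once $q$ exceeds a fixed multiple of $\|\sqrt U\|^4(c+\gamma)/\mu_{\mathrm{min}}^2$, and the tail $\tilde A^k$, with $\tilde c_d$ taken as a geometric-type sequence, absorbs the two $c^{k-j}$ families — feasible exactly because $c$ is taken far below $\tfrac{1}{\sqrt{s\kappa}}$, keeping the series defining $\tilde c_d$ bounded. Re-indexing the $D$-shifted family to align with $\tilde A^{k+1}$ costs a factor $(1-\tfrac{1}{\sqrt{s\kappa}})^{-D}\le e^{2D/\sqrt\kappa}$ via inequality~\eqref{equ: to exp} (using $s\ge 1$), which is the origin of the $e^{2D/\sqrt\kappa}$ in the constants $a,b$.

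Once the delay terms are accounted for, what remains is a small system of scalar inequalities: nonpositivity of the $\|\hat\Theta^k\sqrt U\|^2$ coefficient (the negative $-2\eta s\kappa(\eta-\tfrac{\eta^2\beta}{2}-\tfrac{\eta^2\sqrt{s\kappa}\alpha}{2\rho})$ must dominate $\eta^2 s\kappa$ plus $2(c_1+\tilde c_1)(\tfrac{2\eta\sqrt{s\kappa}}{\sqrt{s\kappa}+1})^2$), nonpositivity of the $\|v^k-\xi^k\|^2$ coefficient (the negative contribution in Proposition~\ref{prop: bound v} must dominate $2(c_1+\tilde c_1)(\tfrac{\sqrt{s\kappa}-1}{\sqrt{s\kappa}(\sqrt{s\kappa}+1)})^2$), and nonpositivity of $\tfrac{1}{\sqrt{s\kappa}}-\eta\alpha\sqrt{s\kappa}(1-\tfrac{1}{\rho})$. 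Since $c_1$ and $\tilde c_1$ are $\mathcal{O}((a+b)/(\alpha\beta))$ once $q$ and $\{\tilde c_d\}$ are fixed, I would pick $\rho$ a fixed constant and solve these three inequalities for $\eta$ and $s$; the closed forms in the statement are exactly the values at which they become tight, and the hypothesis $\kappa>2$ is used only to guarantee $\tfrac{1}{\sqrt{s\kappa}}\le\tfrac12$ so that~\eqref{equ: to exp} applies. This yields $\E[L^{k+1}]\le(1-\tfrac{1}{\sqrt{s\kappa}})\E[L^k]$.

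Finally, unrolling gives $\E[L^k]\le(1-\tfrac{1}{\sqrt{s\kappa}})^kL^0$, and since $L^k\ge 2\eta s\kappa(G(\lambda^k)-G(\xi^{\star}))$, an $\varepsilon$-suboptimal point is reached after $\mathcal{I}_{\mathrm{DLAG}}(\varepsilon)=\mathcal{O}(\sqrt{s\kappa}\log\tfrac{1}{\varepsilon})$ iterations; multiplying by the per-iteration Katyusha cost $\mathcal{O}((m+\sqrt{m\kappa_{\mathrm{max}}})\log\tfrac{2\kappa_{\mathrm{max}}}{c})=\mathcal{O}(m+\sqrt{m\kappa_{\mathrm{max}}})$ from Lemma~\ref{lem: error propagating dynamics} gives the stated gradient complexity. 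The hard part is the delay-term bookkeeping in the middle step: aligning the four differently-indexed, differently-weighted sums from Lemma~\ref{lem: gradient error} with the two auxiliary families in $L^k$ while tracking the $e^{2D/\sqrt\kappa}$ discount factors consistently, and then verifying that the induced $c_1,\tilde c_1$ stay small enough for the three scalar inequalities to remain feasible with $s=\mathcal{O}(1)$ — this coupling is what pins down the exact constants $\eta$ and $s$ in the theorem.
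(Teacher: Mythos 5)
Your proposal follows essentially the same route as the paper's proof: sum Propositions \ref{prop: bound G}--\ref{prop: bound xi}, substitute Lemma \ref{lem: gradient error}, absorb the four delay-term families into $A^k$ and $\tilde A^k$ (picking up the $e^{2D/\sqrt{\kappa}}$ factors via \eqref{equ: to exp}, which is where $\kappa>2$ enters), and then solve the three scalar inequalities on $C_1,C_2,C_3$ for $\rho$, $\eta$, $s$ before unrolling. The only deviation is cosmetic bookkeeping — the paper sets $q$ to cancel only the $\gamma$-weighted window and routes the constant-$c$ window into $\tilde c_d$, whereas you fold both windows into $q$ — which does not change the argument or the resulting constants in any essential way.
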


\subsection{Proof of Theorem \ref{thm: iteration complexity full}}
Combining Propositions \ref{prop: bound G}, \ref{prop: bound v}, and \ref{prop: bound xi} with the definition of $L^k$ in \eqref{equ: lyapunov function} yields 
\begin{align*}
\begin{split}
    &\ \ \ L^{k+1}-\left(1-\frac{1}{\sqrt{s\kappa}}\right)L^k\\
    &\leq\|\Delta\xi^k\|^2\times C_1+\|v^k-\xi^k\|^2\times C_2\\
    &\ \ \ +\|\hat{\Theta}^k\sqrt{U}\|^2\times C_3\\
    &\ \ \ +\|\hat{\Theta}^k\sqrt{U}-g^k\|^2\times (3\rho \sqrt{s\kappa}\eta\frac{1}{\alpha})\\
    &\ \ \ +\sum\limits_{d=1}^{D}\|\xi^{k+1-d}-\xi^{k-d}\|^2\times\left(-q\right)\\
    &\ \ \ +\sum\limits_{d=1}^{k}\|\xi^{k+1-d}-\xi^{k-d}\|^2\times\left(\tilde{c}_{d+1}-r\tilde{c}_d\right)
\end{split}
\end{align*}
where the coefficients $C_1, C_2$, and $C_3$ are
\begin{align}
\label{equ: coefficients}
\begin{split}
C_1 &= \left(\frac{1}{\sqrt{s\kappa}}-\eta\alpha\sqrt{s\kappa}\left(1-\frac{1}{\rho}\right)\right),\\
C_2&= \Bigg(\left(1-\frac{1}{\sqrt{s\kappa}}\right)\bigg(-\frac{1}{\sqrt{s\kappa}}+\eta\alpha\big(\frac{\sqrt{s\kappa}-1}{\rho}-1\big)\bigg)\\
    &\quad\quad\quad\quad\quad\quad\quad\quad\quad\quad\quad+2c_1\left(\frac{\sqrt{s\kappa}-1}{\sqrt{s\kappa}(\sqrt{s\kappa}+1)}\right)^2\\
    &\quad\quad\quad\quad\quad\quad\quad\quad\quad\quad\quad+2\tilde{c}_1\left(\frac{\sqrt{s\kappa}-1}{\sqrt{s\kappa}(\sqrt{s\kappa}+1)}\right)^2\Bigg),\\
C_3 &= \eta^2s\kappa\Bigg(-1+2c_1\left(\frac{2}{\sqrt{s\kappa}+1}\right)^2+2\tilde{c}_1\left(\frac{2}{\sqrt{s\kappa}+1}\right)^2\\
    &\quad\quad\quad\quad\quad\quad\quad\quad\quad\quad\quad+\left(\eta\beta+\frac{\eta\alpha\sqrt{s\kappa}}{\rho}\right)\Bigg).
\end{split}
\end{align}
By Lemma \ref{lem: gradient error} we further know that  
% \begin{align*}
% &\E\|\hat{\Theta}^k\sqrt{U}-g^k\|^2\\
% &=6\|\sqrt{U}\|^4\sum_{d=D+1}^{k}\frac{c^{d-D}}{\mu^2_{\mathrm{min}}}\E\|\xi^{k+1-d}-\xi^{k-d}\|^2\\
% &\quad+8\|\sqrt{U}\|^4\sum_{d=1}^{k}\frac{c^{d}}{\mu^2_{\mathrm{min}}}\E\|\xi^{k+1-d}-\xi^{k-d}\|^2\\
% &\quad + 6\|\sqrt{U}\|^4\sum_{d=1}^{D}\frac{c}{\mu^2_{\mathrm{min}}}\E\|\xi^{k+1-d}-\xi^{k-d}\|^2\\
% &\quad+6\|\sqrt{U}\|^4\sum_{d=1}^{D}\frac{\gamma}{\mu^2_{\mathrm{min}}}\E\|\xi^{k+1-d}-\xi^{k-d}\|^2.
% \end{align*}
% Therefore, 
\begin{align}
\label{equ: E bound}
\begin{split}
    &\ \ \ \E[L^{k+1}-\left(1-\frac{1}{\sqrt{s\kappa}}\right)L^k]\\
    &\leq C_1 \E\|\Delta\xi^k\|^2+ C_2 \E\|v^k-\xi^k\|^2+C_3 \E\|\hat{\Theta}^k\sqrt{U}\|^2\\
    &\ \ \ +\left(\frac{3\rho\sqrt{s\kappa}\eta}{\alpha}6\|\sqrt{U}\|^4\frac{\gamma}{\mu^2_{\mathrm{min}}}-q\right)\sum\limits_{d=1}^{D}\E\|\xi^{k+1-d}-\xi^{k-d}\|^2\\
    &\ \ \ +\left(\frac{3\rho\sqrt{s\kappa}\eta}{\alpha}6\|\sqrt{U}\|^4\frac{c}{\mu^2_{\mathrm{min}}}\right)\sum\limits_{d=1}^{D}\E\|\xi^{k+1-d}-\xi^{k-d}\|^2\\
    &\ \ \ +\left(\frac{3\rho\sqrt{s\kappa}\eta}{\alpha}8\|\sqrt{U}\|^4\frac{c^d}{\mu^2_{\mathrm{min}}}\right)\sum\limits_{d=1}^{k}\E\|\xi^{k+1-d}-\xi^{k-d}\|^2\\
    &\ \ \ +\left(\frac{3\rho\sqrt{s\kappa}\eta}{\alpha}6\|\sqrt{U}\|^4\frac{c^{d-D}}{\mu^2_{\mathrm{min}}}\right)\sum\limits_{d=D+1}^{k}\E\|\xi^{k+1-d}-\xi^{k-d}\|^2\\
    &\ \ \ +\left(\tilde{c}_{d+1}-(1-\frac{1}{\sqrt{s\kappa}})\tilde{c}_d\right)\sum\limits_{d=1}^{k}\E\|\xi^{k+1-d}-\xi^{k-d}\|^2.
\end{split}
\end{align}

In the rest of the proof, we will select $\eta, \rho, s$, and $q$ such that the right-hand side of \eqref{equ: E bound} is non-positive. Therefore, $L^k$ converges to $0$ at a linear rate of $1-\frac{1}{\sqrt{s\kappa}}$. Recalling the definition of $L^k$ in \eqref{equ: lyapunov function}, we know that the (expected) iteration complexity for Algorithm \ref{alg: global formulation} to obtain an $\varepsilon-$suboptimal solution is  
\begin{align*}
    \mathcal{I}_{\mathrm{DLAG}}=\mathcal{O}\left(\sqrt{s\kappa}\log(\frac{1}{\varepsilon})\right),
\end{align*}
where $s$ will be specified in \eqref{equ: sss}.

\subsubsection{Bound \texorpdfstring{$c_1$}{} to make the first and fourth term of \texorpdfstring{\eqref{equ: E bound}}{} non-positive}

In order for the coefficient $C_1$ in \eqref{equ: E bound} and \eqref{equ: coefficients} to be non-negative, we can set $\rho>1$ and 
\begin{align*}
    \eta s\beta\geq\frac{1}{1-\frac{1}{\rho}}=\frac{\rho}{\rho-1}.
\end{align*}
Therefore, it suffices to set 
\begin{align}
\label{equ: s}
s=\frac{\rho}{\rho-1}\frac{1}{\eta\beta}.
\end{align}

Let us also set $q=\frac{18\rho\eta\|\sqrt{U}\|^4}{\alpha\mu^2_{\mathrm{min}}}\sqrt{s\kappa}\gamma$ to make the fourth term of \eqref{equ: E bound} to be $0$. Therefore,
\begin{align}
\label{equ: c_1}
\begin{split}
    c_1&=\sum\limits_{j=1}^{D}(1-\frac{1}{\sqrt{s\kappa}})^{-j}q\leq q D(1-\frac{1}{\sqrt{s\kappa}})^{-D}\\
    &\leq q D e^{\frac{2D}{\sqrt{s\kappa}}}\leq q D e^{\frac{2D}{\sqrt{\kappa}}}\\
    &=\frac{18\rho\eta\|\sqrt{U}\|^4}{\alpha\mu^2_{\mathrm{min}}}\sqrt{s\kappa}\gamma D e^{\frac{2D}{\sqrt{\kappa}}}= 3 a \rho\eta\beta\sqrt{s\kappa}.
\end{split}
\end{align}
where the second inequality follows from \eqref{equ: to exp} (note that $\kappa>2$ and $s\geq 1$). In the last equality, we have set 
\begin{align}
\label{equ: a}
a= \frac{6\|\sqrt{U}\|^4}{\alpha\beta\mu^2_{\mathrm{min}}}\gamma D e^{\frac{2D}{\sqrt{\kappa}}}.
\end{align} %

\subsubsection{Bound \texorpdfstring{$\tilde{c}_1$}{} to make the sum of last 4 terms of \texorpdfstring{\eqref{equ: E bound}}{} non-positive}

Let $r=1-\frac{1}{\sqrt{s\kappa}}$. In order to make the sum of the last four terms to be non-positive, we require that
% \begin{align*}
% &\tilde{c}_{d+1}-r\tilde{c}_d+(3\rho\sqrt{s\kappa}\eta\frac{1}{\alpha}8\|\sqrt{U}\|^4\frac{c^d}{\mu^2_{\mathrm{min}}}\\
% &+3\rho\sqrt{s\kappa}\eta\frac{1}{\alpha}6\|\sqrt{U}\|^4\frac{c}{\mu^2_{\mathrm{min}}}= 0 \quad \text{for}\,\, 1\leq d\leq D,\\
% &\tilde{c}_{d+1}-r\tilde{c}_d+(3\rho\sqrt{s\kappa}\eta\frac{1}{\alpha}8\|\sqrt{U}\|^4\frac{c^d}{\mu^2_{\mathrm{min}}}\\
% &+3\rho\sqrt{s\kappa}\eta\frac{1}{\alpha}6\|\sqrt{U}\|^4\frac{c^{d-D}}{\mu^2_{\mathrm{min}}}= 0, \quad \text{for}\,\, d\geq D+1.
% \end{align*}
% Equivalently, 
\begin{align*}
&\frac{\tilde{c}_d}{r^d}-\frac{\tilde{c}_{d+1}}{r^{d+1}}\\
&=\left( 6\frac{c}{r^{d+1}}+8\frac{c^d}{r^{d+1}} \right)\frac{\|\sqrt{U}\|^4}{\mu^2_{\mathrm{min}}}(3\rho\sqrt{s\kappa}\eta\frac{1}{\alpha})\quad \text{for}\,\, 1\leq d\leq D,\\
&\frac{\tilde{c}_d}{r^d}-\frac{\tilde{c}_{d+1}}{r^{d+1}}\\
&=\left( 6\frac{c^{d-D}}{r^{d+1}}+8\frac{c^d}{r^{d+1}} \right)\frac{\|\sqrt{U}\|^4}{\mu^2_{\mathrm{min}}}(3\rho\sqrt{s\kappa}\eta\frac{1}{\alpha})\quad \text{for}\,\, d\geq D+1.
\end{align*}
In order to ensure that $\tilde{c}_d>0$ for all $d\geq 1$, let us take $\tilde{c}_1$ such that
\begin{align*}
\frac{\tilde{c}_1}{r}&=\sum_{d=D+1}^{\infty} \left( 6\frac{c^{d-D}}{r^{d+1}}+8\frac{c^d}{r^{d+1}} \right)\frac{\|\sqrt{U}\|^4}{\mu^2_{\mathrm{min}}}(3\rho\sqrt{s\kappa}\eta\frac{1}{\alpha})\\
&\ \ \ + \sum_{d=1}^D \left( 6\frac{c}{r^{d+1}}+8\frac{c^d}{r^{d+1}} \right)\frac{\|\sqrt{U}\|^4}{\mu^2_{\mathrm{min}}}(3\rho\sqrt{s\kappa}\eta\frac{1}{\alpha}).
\end{align*}
Let $c\leq\frac{r}{2}$, we have
\begin{align*}
\tilde{c}_1&=\left(6\frac{\frac{c}{r^{D+1}}}{1-\frac{c}{r}}+8\frac{\frac{c^{D+1}}{r^{D+1}}}{1-\frac{c}{r}}\right)\frac{\|\sqrt{U}\|^4}{\mu^2_{\mathrm{min}}}(3\rho\sqrt{s\kappa}\eta\frac{1}{\alpha})\\
&\ \ \ + \left(6c\frac{\frac{1}{r}(1-\frac{1}{r^D})}{1-\frac{1}{r}}+8\frac{\frac{c}{r}(1-\frac{c^D}{r^D})}{1-\frac{c}{r}}\right)\frac{\|\sqrt{U}\|^4}{\mu^2_{\mathrm{min}}}(3\rho\sqrt{s\kappa}\eta\frac{1}{\alpha})\\
&\leq \left(12\frac{c}{r^{D+1}}+16\frac{c^{D+1}}{r^{D+1}}\right)\frac{\|\sqrt{U}\|^4}{\mu^2_{\mathrm{min}}}(3\rho\sqrt{s\kappa}\eta\frac{1}{\alpha})\\
&\ \ \ + \left(6\frac{c D}{r^D}+16\frac{c}{r}\right)\frac{\|\sqrt{U}\|^4}{\mu^2_{\mathrm{min}}}(3\rho\sqrt{s\kappa}\eta\frac{1}{\alpha})\\
&\leq \left(12\frac{c}{r^{D+1}}+16\frac{1}{2^D}\frac{c}{r^{D+1}}\right)\frac{\|\sqrt{U}\|^4}{\mu^2_{\mathrm{min}}}(3\rho\sqrt{s\kappa}\eta\frac{1}{\alpha})\\
&\ \ \ + \left(6\frac{c D}{r^{D+1}}+16\frac{c}{r^{D+1}}\right)\frac{\|\sqrt{U}\|^4}{\mu^2_{\mathrm{min}}}(3\rho\sqrt{s\kappa}\eta\frac{1}{\alpha})
\end{align*}
where we have applied $\frac{1-\frac{1}{r^D}}{1-\frac{1}{r}}=(\frac{1}{r^{D-1}}+...+1)\leq D\frac{1}{r^{D-1}}$ in the first inequality, and $c\leq\frac{1}{2}$ in the second one.

Let us further set $D\geq 2$ so that 
\[
12\frac{c}{r^{D+1}}+16\frac{1}{2^D}\frac{c}{r^{D+1}}+6\frac{c D}{r^{D+1}}+16\frac{c}{r^{D+1}}\leq 25\frac{c D}{r^{D+1}}.
\]
As a result, we obtain
\[
\tilde{c}_1\leq 25\frac{c D}{r^{D+1}}\frac{\|\sqrt{U}\|^4}{\mu^2_{\mathrm{min}}}(3\rho\sqrt{s\kappa}\eta\frac{1}{\alpha}).
\]
Furthermore, \eqref{equ: to exp} tells us that
\[
\frac{1}{r^{D+1}}=\left(1-\frac{1}{\sqrt{s\kappa}}\right)^{-(D+1)}\leq e^{\frac{2(D+1)}{\sqrt{s\kappa}}}\leq e^{\frac{2(D+1)}{\sqrt{\kappa}}}.
\]
So finally, we arrive at
\begin{align}
\tilde{c}_1&\leq 25 c D e^{\frac{2(D+1)}{\sqrt{\kappa}}}\frac{\|\sqrt{U}\|^4}{\mu^2_{\mathrm{min}}}(3\rho\sqrt{s\kappa}\eta\frac{1}{\alpha})=3 b \rho \eta \beta \sqrt{s\kappa}.\label{equ: tilde c1}
\end{align}
where we have set
\begin{align}
\label{equ: b}
b= \frac{25\|\sqrt{U}\|^4}{\alpha\beta\mu^2_{\mathrm{min}}}c D e^{\frac{2(D+1)}{\sqrt{\kappa}}}.
\end{align}

\subsubsection{Determine \texorpdfstring{$\rho$}{} and \texorpdfstring{$s$}{} to make the second and third term of \texorpdfstring{\eqref{equ: E bound}}{} non-positive}
The coefficient $C_3$ in \eqref{equ: E bound} and \eqref{equ: coefficients} satisfies
%The coefficient of $\|\hat{\Theta}^k\sqrt{U}\|^2\times \eta^2s\kappa$ in \eqref{equ: E bound} satisfies
\begin{align}
    \frac{C_3}{\eta s\kappa}&=-1+2c_1\left(\frac{2}{\sqrt{s\kappa}+1}\right)^2+2\tilde{c}_1\left(\frac{2}{\sqrt{s\kappa}+1}\right)^2\nonumber\\
    &\qquad\qquad+\left(\eta\beta+\frac{\eta\alpha\sqrt{s\kappa}}{\rho}\right)\nonumber\\
    &\leq -1+6(a+b)\rho\eta\beta\sqrt{s\kappa}\left(\frac{2}{\sqrt{s\kappa}+1}\right)^2\nonumber\\
    &\qquad\qquad+\eta\beta\left(1+\frac{\sqrt{s\kappa}}{\rho\kappa}\right)\nonumber\\
    &\leq -1+\eta\beta\left(\frac{24(a+b)\rho}{\sqrt{s\kappa}}+1+\frac{\sqrt{s}}{\rho\sqrt{\kappa}}\right)\nonumber\\
    &\leq -1+\eta\beta\left(\frac{24(a+b)\rho}{\sqrt{s}}+1+\frac{\sqrt{s}}{\rho}\right)\nonumber\\
    &=-1 + 24(a+b)\sqrt{\rho(\rho-1)}(\eta\beta)^{\frac{3}{2}}\nonumber\\
    &\qquad\qquad+\eta\beta+\frac{1}{\sqrt{\rho(\rho-1)}}(\eta\beta)^{\frac{1}{2}},\label{equ: 1}
\end{align}
where in the first step we have applied \eqref{equ: c_1} and \eqref{equ: tilde c1}, and in last step we have used \eqref{equ: s}.

Now, we take $\rho>2$ and
\begin{align}
    \eta&=\frac{\rho-2}{\rho-1}\frac{1}{(1+24(a+b)\rho)\beta}\label{equ: eta},\\
    s &=\frac{\rho}{\rho-1}\frac{1}{\eta\beta}= \frac{\rho(1+24(a+b)\rho)}{\rho-2},\label{equ: ss}
\end{align}
It is evident that $\eta\beta\leq 1$. Consequently, from \eqref{equ: 1} we know that %$\|\hat{\Theta}^k\sqrt{U}\|^2\times \eta^2s\kappa$ in \eqref{equ: E bound} satisfies
\begin{align*}
    %&-1+2c_1\left(\frac{2}{\sqrt{s\kappa}+1}\right)^2+2\tilde{c}_1\left(\frac{2}{\sqrt{s\kappa}+1}\right)^2+\left(\eta\beta+\frac{\eta\alpha\sqrt{s\kappa}}{\rho}\right)\\
    &\frac{C_3}{\eta s \kappa}\leq -1+\eta\beta(24(a+b)\rho+1)+\frac{1}{\rho-1}=0.
    \end{align*}
In order to make $s$ defined in \eqref{equ: ss} as small as possible, we minimize the right-hand side of \eqref{equ: ss} with respect to $\rho$ to get
\begin{align}
\label{equ: rho}
    \rho=2+\sqrt{2+\frac{1}{12(a+b)}}>2,
\end{align}
which tells us that 
\begin{align}
&s = \frac{2+\sqrt{2+\frac{1}{12(a+b)}}}{\sqrt{2+\frac{1}{12(a+b)}}}\nonumber\\
&\quad\quad\quad\quad\quad\times\Bigg(1+24(a+b)\left(2+\sqrt{2+\frac{1}{12(a+b)}}\right)\Bigg), \label{equ: sss}\\
&\eta = \frac{\sqrt{2+\frac{1}{12(a+b)}}}{1+\sqrt{2+\frac{1}{12(a+b)}}}\frac{1}{1+24(a+b)(2+\sqrt{2+\frac{1}{12(a+b)}})}\frac{1}{\beta},\label{equ: eta1}
\end{align}
where $a= \frac{6\|\sqrt{U}\|^4}{\alpha\beta\mu^2_{\mathrm{min}}}\gamma D e^{\frac{2D}{\sqrt{\kappa}}}$ and $b=\frac{10\|\sqrt{U}\|^4}{\alpha\beta\mu^2_{\mathrm{min}}}c D e^{\frac{2(D+1)}{\sqrt{\kappa}}}$ are defined in \eqref{equ: a} and \eqref{equ: b}, respectively.

The coefficient $C_2$ in \eqref{equ: E bound} and \eqref{equ: coefficients} satisfies
\begin{align*}
    &C_2=\ \ \ \left(1-\frac{1}{\sqrt{s\kappa}}\right)\left(-\frac{1}{\sqrt{s\kappa}}+\eta\alpha\left(\frac{\sqrt{s\kappa}-1}{\rho}-1\right)\right)\\
    &\quad+2c_1\left(\frac{\sqrt{s\kappa}-1}{\sqrt{s\kappa}(\sqrt{s\kappa}+1)}\right)^2+2\tilde{c}_1\left(\frac{\sqrt{s\kappa}-1}{\sqrt{s\kappa}(\sqrt{s\kappa}+1)}\right)^2\\
    &=\left(1-\frac{1}{\sqrt{s\kappa}}\right)\left(-\frac{1}{\sqrt{s\kappa}}+\eta\alpha\left(\frac{\sqrt{s\kappa}}{\rho}\right)\right)\\
    &+2c_1\left(\frac{\sqrt{s\kappa}-1}{\sqrt{s\kappa}(\sqrt{s\kappa}+1)}\right)^2+2\tilde{c}_1\left(\frac{\sqrt{s\kappa}-1}{\sqrt{s\kappa}(\sqrt{s\kappa}+1)}\right)^2\\
    &= (1-\frac{1}{\sqrt{s\kappa}})\frac{1}{\sqrt{s\kappa}}\left(-1+\frac{\eta s \beta}{\rho}\right) \\
    &\quad + 2(c_1+\tilde{c}_1)\left(\frac{\sqrt{s\kappa}-1}{\sqrt{s\kappa}(\sqrt{s\kappa}+1)}\right)^2\\
    &\leq (1-\frac{1}{\sqrt{s\kappa}})\frac{1}{\sqrt{s\kappa}}\left(-1+\frac{\eta s \beta}{\rho}\right) \\
    &\quad + 6(a+b)\rho\eta\beta \sqrt{s\kappa} \left(\frac{\sqrt{s\kappa}-1}{\sqrt{s\kappa}(\sqrt{s\kappa}+1)}\right)^2\\
    &\leq (1-\frac{1}{\sqrt{s\kappa}})\frac{1}{\sqrt{s\kappa}}\left(-1+\frac{\eta s \beta}{\rho}+6(a+b)\rho\eta\beta\right),
\end{align*}
where in the first inequality, we have applied \eqref{equ: c_1} and \eqref{equ: tilde c1}, and $\frac{1}{\sqrt{s\kappa}}(\frac{\sqrt{s\kappa}-1}{\sqrt{s\kappa}+1})^2< (1-\frac{1}{\sqrt{s\kappa}})\frac{1}{\sqrt{s\kappa}}$ in the second inequality.

Furthermore, we have
\begin{align*}
    &\ \ \ -1+\frac{\eta s\beta}{\rho}+6(a+b)\rho\eta\beta\\
    &=-1 +\frac{1}{\rho-1}+\frac{\rho-2}{\rho-1}\frac{6(a+b)\rho}{(1+24(a+b)\rho)}\\
    &=\frac{\rho-2}{\rho-1}\left(-1+\frac{6(a+b)\rho}{1+24(a+b)\rho}\right)\\
    &\leq 0,
\end{align*}
where we have applied \eqref{equ: s} and \eqref{equ: eta} in the first equality.

Now, we can conclude that the right-hand side of \eqref{equ: E bound} is non-positive. Therefore, $L^k$ converges to $0$ at a linear rate of $1-\frac{1}{\sqrt{s\kappa}}$. Recalling the definition of $L^k$ in \eqref{equ: lyapunov function}, we know that the iteration complexity for Algorithm \ref{alg: global formulation} to obtain an $\varepsilon-$suboptimal solution is  
\begin{align*}
    \mathcal{I}_{\mathrm{DLAG}}=\mathcal{O}\left(\sqrt{s\kappa}\log(\frac{1}{\varepsilon})\right),
\end{align*}
where $s$ is given by \eqref{equ: sss}. 

As a result, if the subproblem \eqref{equ: subproblem} is solved by AGD with $\mathcal{O}(m\sqrt{\kappa_{\mathrm{max}}})$ gradient evaluations, then Algorithm \ref{alg: local formulation} needs $\mathcal{O}\left(m\sqrt{\kappa_{\mathrm{max}}}\sqrt{s\kappa}\log(\frac{1}{\varepsilon})\right)$ gradient evaluations to reach $\vareps-$ suboptimality. If the subproblem \eqref{equ: subproblem} is solved by Katyusha with $\mathcal{O}(m+\sqrt{m\kappa_{\mathrm{max}}})$ stochastic gradient evaluations, then Algorithm \ref{alg: local formulation} needs $\mathcal{O}\left((m+\sqrt{m\kappa_{\mathrm{max}}})\sqrt{s\kappa}\log(\frac{1}{\varepsilon})\right)$ stochastic gradient evaluations to reach $\vareps-$suboptimality in expectation. 

\subsection{Proof of Theorem \ref{thm: iteration complexity}}
\label{app: compare iteration complexity}

By Theorem \ref{thm: iteration complexity} we know that $\mathcal{I}_{\mathrm{DLAG}}(\varepsilon)=\mathcal{O}\left(\sqrt{s\kappa}\log(\frac{1}{\varepsilon})\right)$. In this proof, we will show that $s=10$ under the settings of  Theorem \ref{thm: iteration complexity}, so that $\mathcal{I}_{\mathrm{DLAG}}(\varepsilon)=\mathcal{O}\left(\sqrt{\kappa}\log(\frac{1}{\varepsilon})\right)$.

In fact, from \eqref{equ: a} and \eqref{equ: b} we know that
\[
a = \frac{6\|\sqrt{U}\|^4}{\alpha\beta\mu^2_{\mathrm{min}}}\gamma D e^{\frac{2D}{\sqrt{\kappa}}} = \frac{1}{48},
\]
\[
b = \frac{25\|\sqrt{U}\|^4}{\alpha\beta\mu^2_{\mathrm{min}}}c D e^{\frac{2(D+1)}{\sqrt{\kappa}}} = \frac{1}{48}.
\]
Note that we have $c<\frac{\alpha\beta\mu^2_{\mathrm{min}}}{1200 D\|\sqrt{U}\|^4}<\frac{\alpha\beta\mu^2_{\mathrm{min}}}{\|\sqrt{U}\|^4}\leq\frac{\sigma^2_1(U)}{\|\sqrt{U}\|^4} <1$.

\eqref{equ: sss} tells us that
\begin{align*}
s &= \frac{2+\sqrt{2+\frac{1}{12(a+b)}}}{\sqrt{2+\frac{1}{12(a+b)}}}\\
&\quad\quad\times\left(1+24(a+b)\left(2+\sqrt{2+\frac{1}{12(a+b)}}\right)\right)\\
&= 10.
\end{align*}
And \eqref{equ: eta1} tells us that
\begin{align*}
\eta &= \frac{\sqrt{2+\frac{1}{12(a+b)}}}{1+\sqrt{2+\frac{1}{12(a+b)}}}\frac{1}{1+24(a+b)(2+\sqrt{2+\frac{1}{12(a+b)}})}\frac{1}{\beta}\\
&=\frac{2}{15}\frac{1}{\beta}.
\end{align*}

\subsection{Communication complexity}
\label{app: communication complexity}
In this section, we prove the communication complexity of Algorithm DLAG(\ref{alg: global formulation}) and MDLAG(Algorithm \ref{alg: MDLAG global formulation}) as stated in Theorem \ref{thm: communication complexity}.

To analyze the communication complexity of Algorithm \ref{alg: global formulation}, let us first define the importance factor of worker $i$:
    $$H_i=\frac{1/\mu_i}{1/\mu_{\mathrm{min}}}=\frac{\mu_{\mathrm{min}}}{\mu_i},$$ 
where $\frac{1}{\mu_i}$ is smoothness parameter of $f^*_i$.

We first show that, if $H^2_i\leq\frac{\gamma}{d}$ for some $1\leq d\leq D$, then  worker $i$ communicates with its neighbors at most every $(d+1)$ iterations.

Suppose at iteration $k$, the most recent iteration that worker $i$ sends information to its neighbors is $k-d'$ for some $d'$ that satisfies $1\leq d'\leq d\leq D$, i.e., $\hat{\theta}_i^{k-1}=\theta_i^{k-d'}$. As a result,
\begin{align}
\label{equ: comm1}
\begin{split}
&\|\hat{\theta}^{k-1}_i-\theta^k_i\|^2\\
& \leq  3\|\theta^{k-d'}_i-\nabla f^*_i(x^{k-d'}_i)\|^2+3\|\theta^k_i-\nabla f^*_i(x^k_i)\|^2\\
&\ \ \ \ +3\|\nabla f^*_i(x^{k-d'}_i)-\nabla f^*_i(x^k_i)\|^2.
\end{split}
\end{align}
And \eqref{equ: error1} in the proof of Lemma \ref{lem: error propagating dynamics} tells us that 
\begin{align}
\label{equ: comm2}
&\E\|\theta^k_i-\nabla f^*_i(x^k_i)\|^2\nonumber\\ &\leq  \sum_{j=0}^{k-1}c^{k-j}\E\|\nabla f^*_i(x^{j}_i)-\nabla f^*_i(x^{j+1}_i)\|^2\nonumber\\
&\leq \sum_{j=0}^{k-1}\frac{c^{k-j}}{\mu^2_{\mathrm{min}}}\E\|x^{j}_i- x^{j+1}_i\|^2,
\end{align}
\begin{align}
\begin{split}
\label{equ: comm3}
&\|\theta^{k-d'}_i-\nabla f^*_i(x^{k-d'}_i)\|^2 \\
&\leq  \sum_{j=0}^{k-d'-1}c^{k-d'-j}\E\|\nabla f^*_i(x^{j}_i)-\nabla f^*_i(x^{j+1}_i)\|^2\\
&\leq \sum_{j=0}^{k-d'-1}\frac{c^{k-d'-j}}{\mu^2_{\mathrm{min}}}\E\|x^{j}_i- x^{j+1}_i\|^2\\
&\leq \sum_{j=0}^{k-D-1}\frac{c^{k-D-j}}{\mu^2_{\mathrm{min}}}\E\|x^j_i-x^{j+1}_i\|^2\\ &\quad +\sum_{j=k-D}^{k-1}\frac{c}{\mu^2_{\mathrm{min}}}\E\|x^j_i-x^{j+1}_i\|^2.
\end{split}
\end{align}
Furthermore, we know that
\begin{align}
\label{equ: comm4}
\begin{split}
    &\|\nabla f_i^*(x_i^{k-d'})-\nabla f_i^*(x_i^k)\|^2\\
    &\leq \frac{1}{\mu_i^2}\|x_i^{k-d'}-x_i^k\|^2\leq d'\frac{1}{\mu_{\mathrm{min}}^2} H_i^2\sum\limits_{j=1}^{d'}\|x_i^{k+1-j}-x_i^{k-j}\|^2\\
    &\leq \frac{\gamma}{\mu_{\mathrm{min}}^2}\sum\limits_{j=1}^{d'}\|x_i^{k+1-j}-x_i^{k-j}\|^2\leq \frac{\gamma}{\mu_{\mathrm{min}}^2}\sum\limits_{j=1}^{D}\|x_i^{k+1-j}-x_i^{k-j}\|^2.
\end{split}
\end{align}
Applying \eqref{equ: comm2}, \eqref{equ: comm3}, and \eqref{equ: comm4} to \eqref{equ: comm1}, we arrive at
\begin{align*}
\begin{split}
\E\|\hat{\theta}^{k-1}_i-\theta^k_i\|^2  & \leq  3\sum_{j=0}^{k-D-1}\frac{c^{k-D-j}}{\mu^2_{\mathrm{min}}}\E\|x^j_i-x^{j+1}_i\|^2\\
&\quad +3\sum_{j=0}^{k-1}\frac{c^{k-j}}{\mu^2_{\mathrm{min}}}\E\|x^j_i-x^{j+1}_i\|^2\\
&\quad + 3\sum_{j=k-D}^{k-1}\frac{c}{\mu^2_{\mathrm{min}}}\E\|x^j_i-x^{j+1}_i\|^2\\
&\quad +3\sum_{j=k-D}^{k-1}\frac{\gamma}{\mu^2_{\mathrm{min}}}\E\|x^j_i-x^{j+1}_i\|^2.
\end{split}
\end{align*}
As a result, worker $i$'s lazy condition at iteration $k$ is satisfied in expectation. Since $d'$ can be any integer in $[1,d]$, we know that worker $i$ send gradients to its neighbors at most every $(d+1)$ iterations in expectation.

To obtain the communication complexity of Algorithm \ref{alg: global formulation}, we recall the definition of the heterogeneity score function $h_d(\gamma)$ for $d=1,2,...,D$:
\begin{align}
\label{equ: score function 1}
    h_d(\gamma)=\frac{1}{2|\mathcal{E}|}\sum\limits_{i=1}^n m_i\mathbb{1}\left(H_i^2\leq\frac{\gamma}{d}\right), 
\end{align}
where $|\mathcal{E}|$ is the number of edges in the network, $m_i$ is the number of edges connected to worker $i$. We have $\sum_{i=1}^n m_i=2|\mathcal{E}|$. $\mathbb{1}\left(H_i^2\leq\frac{\gamma}{d}\right)$ equals $1$ if $H_i^2\leq\frac{\gamma}{d}$, and $0$ otherwise.

Now, let us split all $n$ workers into $(D+1)$ subgroups:

$\mathcal{M}_0:$ every worker $i$ that does not satisfy $H_i^2\leq \gamma$;

...

$\mathcal{M}_d:$ every worker $i$ that does not satisfy $H_i^2\leq \frac{\gamma}{d+1}$ but satisfies $H_i^2\leq \frac{\gamma}{d}$;

...

$\mathcal{M}_D:$ every worker $i$ that satisfies $H_i^2\leq \frac{\gamma}{D}$;

Then the communication complexity for DLAG(Algorithm \ref{alg: global formulation}) to reach $\varepsilon-$suboptimality satisfies
\begin{align*}
    \mathcal{C}_{\mathrm{DLAG}}(\varepsilon)
    &\leq \sum_{d=0}^D \sum_{i\in\mathcal{M}_d}m_i\frac{\mathcal{I}_{\mathrm{DLAG}}(\varepsilon)}{d+1}\\
    &\overset{(a)}{\leq}\bigg(1-h_1(\gamma)+\frac{1}{2}(h_1(\gamma)-h_2(\gamma))\\
    &\quad\quad\quad\quad+\cdots+\frac{1}{D+1}h_D(\gamma)\bigg)2|\mathcal{E}|\mathcal{I}_{\mathrm{DLAG}}(\varepsilon)\\
    &=\left(1-\sum\limits_{d=1}^D\left(\frac{1}{d}-\frac{1}{d+1}\right)h_d(\gamma)\right)2|\mathcal{E}|\mathcal{I}_{\mathrm{DLAG}}(\varepsilon),
\end{align*}
where (a) follows from \eqref{equ: score function 1} and the definition of the subgroups $\mathcal{M}_d$ for $d=0,2,...,D$.

For MDLAG(Algorithm \ref{alg: MDLAG global formulation} or \ref{alg: global formulation}) with $K$ communication rounds per iteration, The communication save happens at the first round of communication. Therefore, we have
\begin{align*}
    &\mathcal{C}_{\mathrm{MDLAG}}(\varepsilon)\\
    &\leq (K-1)2|\mathcal{E}|\mathcal{I}_{MDLAG}(\vareps)+\sum_{d=0}^D \sum_{i\in\mathcal{M}_d}m_i\frac{\mathcal{I}_{\mathrm{MDLAG}}(\varepsilon)}{d+1}\\
    &\leq(K-1)2|\mathcal{E}|\mathcal{I}_{MDLAG}(\vareps)+\bigg(1-h_1(\gamma)+\frac{1}{2}(h_1(\gamma)\\
    &\quad\quad\quad\quad-h_2(\gamma))+\cdots+\frac{1}{D+1}h_D(\gamma)\bigg)2|\mathcal{E}|\mathcal{I}_{\mathrm{MDLAG}}(\varepsilon)\\
    &=\left(K-\sum\limits_{d=1}^D\left(\frac{1}{d}-\frac{1}{d+1}\right)h_d(\gamma)\right)2|\mathcal{E}|\mathcal{I}_{\mathrm{MDLAG}}(\varepsilon),
\end{align*}

\subsubsection{An illustrative example for Corollary \ref{coro: compare communication complexity}}
\begin{example}
\label{exam: example}
If $\mu_1=\mu_{\mathrm{min}}\leq \sqrt{\frac{\gamma}{D}}$, and  $\mu_2=\mu_3=...=\mu_{n}= 1$, then $h_d\left(\gamma\right)\equiv 1-\frac{m_1}{2|\mathcal{E}|}$\,\, for $d=1,2,...,D$. Furthermore,
\begin{align*}
\frac{\mathcal{C}_{\mathrm{DLAG}}(\varepsilon)}{\mathcal{C}_{SSDA}(\varepsilon)}&\leq \sqrt{10}\left(1-\left(1-\frac{1}{D+1}\right)\left(1-\frac{m_1}{2|\mathcal{E}|}\right)\right) \\
&\leq \sqrt{10}\left(\frac{1}{D+1}+\frac{m_1}{2|\mathcal{E}|}\right).
\end{align*}
As a result, $\frac{\mathcal{C}_{\mathrm{DLAG}}(\varepsilon)}{\mathcal{C}_{SSDA}(\varepsilon)}< \frac{1}{3}$ when $D=20$ and $\frac{m_1}{2|\mathcal{E}|}\leq\frac{1}{20}$.
\end{example}

\subsection{MDLAG and Proof of Theorem \ref{thm: MDLAG iteration complexity}}
\label{app: MDLAG}
In this section, we first provide a full description of the MDLAG algorithm as in Algorithm \ref{alg: MDLAG global formulation}. Then, we prove its iteration complexity and stochastic gradient complexity stated in Theorem \ref{thm: MDLAG iteration complexity}.
\begin{algorithm}[H]
\caption{Multi-DLAG (MDLAG)}
\label{alg: MDLAG local formulation}
    \textbf{Input:} $x^0_i=y^0_i=0$ and $\hat{\theta}^0_i=\theta^0_i=\nabla f^*_i(x^0_i)$ for workers $i=1,2,...,n$, step size $\eta>0$, parameters $s\geq 1$, $K=\lfloor\frac{1}{\sqrt{\zeta(U)}}\rfloor, c_1 = \frac{1-\sqrt{\zeta(U)}}{1+\sqrt{\zeta(U)}}, c_2 = \frac{1+\zeta(U)}{1-\zeta(U)}, c_3=\frac{2}{(1+\zeta(U))\sigma_1(U)}, \kappa'=\frac{\kappa_F}{\zeta(P_K(U))}$.
    \begin{flushleft}
    \textbf{Output:} $y^K=(y^K_1, y^K_2, ...,y^K_n).$
    \end{flushleft}
    \begin{algorithmic}[1]
        \FOR{each worker $i$ in parallel}
        \STATE{Read $P_i^{k-1}$, $\hat{\theta}^{k-1}_i$, and $\theta^{k-1}_i$ from cache;} 
        \STATE{Get $\theta^k_i$ via {\small $\mathcal{O}\left((m+\sqrt{m\kappa_{\mathrm{max}}})\log(\frac{2\kappa_{\mathrm{max}}}{c})\right)$} stochastic gradient steps of Katyusha, warm started at $\theta^{k-1}_i$;}
        \IF{$\hat{\theta}^{k-1}_i$
        fails condition \eqref{equ: worker's condition 1} \textbf{or} $d^{k-1}_i=D$}
        \STATE{Send $Q_i^k\coloneqq\theta^k_i-\hat{\theta}^{k-1}_{i}$ to worker $i'\in\mathcal{N}(i)\backslash \{i\}$;}
        \STATE{$\hat{\theta}^{k}_{i}\leftarrow {\theta}^{k}_{i}$;}
        \STATE{$d^{k}_i=0$;}
        \ELSE{}
        \STATE{(Worker $i$ sends out nothing)}
        \STATE{$\hat{\theta}^{k}_i\leftarrow \hat{\theta}^{k-1}_i$;}
        \STATE{$d^k_i=d^{k-1}_i+1$;}
        \ENDIF
        \ENDFOR
        \STATE{$y^{k+1}\leftarrow x^k-\eta\,\, \text{Accelerated Gossip}(\hat{\Theta}^k, U, K)$;}
        \STATE{$x^{k+1}\leftarrow y^{k+1}+\frac{\sqrt{s\kappa'}-1}{\sqrt{s\kappa'}+1}(y^{k+1}-y^k)$;}
        \STATE{$k\leftarrow k+1$;}
        \STATE{\textbf{Procedure} Accelerated Gossip$(z, U, K)$}
        \STATE{$a_0=1$, $a_1=c_2;$}
        \FOR{each worker $i$ in parallel}
        \STATE{Let $S^k_i\coloneqq\{j\in\mathcal{N}(i)\mid j \,\,\text{sends out}\,\, Q^k_j\}$;}
        \STATE{Update cache: $P_i^k\leftarrow P_i^{k-1}+\sum_{j\in S^k_i }U_{ij}Q_j^k$;}
        \ENDFOR
        \STATE{$z_0=z, z_1 = c_2 z - c_2c_3 P^k;$}
        \FOR{$l = 1$ to $K-1$}
        \STATE{$a_{l+1}= 2c_2 a_l - a_{l-1}$;}
        \STATE{$z_{l+1}=2c_2z_l(I-c_3U)-z_{l-1};$}
        \ENDFOR
        \STATE{\textbf{return} $z_0-\frac{z_K}{a_K};$}
        \STATE{\textbf{end Procedure}}
    \end{algorithmic}
\end{algorithm}
Compared with DLAG (Algorithm \ref{alg: local formulation}), MDLAG applies an accelerated gossip procedure in line 14, where $K$ rounds of communications are performed instead of $1$ round, and communication save happens at the first round. This procedure is summarized in lines 17-29.

\subsubsection{Proof of Theorem \ref{thm: MDLAG iteration complexity}}
Compared with DLAG, MDLAG applies $P_K(U)$ as the gossip matrix instead of $U$, where $P_K(U)=I-\frac{T_K(c_2(I-U))}{T_K(c_2I)},$ $K=\lfloor\frac{1}{\sqrt{\zeta(U)}}\rfloor$,  $c_2=\frac{1+\gamma}{1-\gamma}$, and $T_K$ is a Chebyshev polynomial of power $K$. By Theorem 4 of \cite{scaman2017optimal}, $P_K(U)$ is also a gossip matrix satisfying Assumption \ref{assump: assumption on network topology}, and its eigengap satisfies $\zeta(P_K(U))\geq \frac{1}{4}.$

In this regard, MDLAG can be viewed as Nesterov's accelerated gradient descent applied to the following problem\footnotemark[1], but with inexact dual gradients given by Kaytusha with warm start, and the worker's lazy condition \eqref{equ: worker's condition 1}:
\begin{align}
\label{equ: MDLAG dual problem}
    \mathop{\mathrm{minimize}}_{\xi\in\RR^{d\times n}} G(\xi)\coloneqq F^*(\xi \sqrt{P_K(U)}).
\end{align}

\footnotetext[1]{Recall that DLAG is Nesterov's accelerated gradient descent applied to problem \eqref{equ: dual problem}, but with inexact dual gradients given by Kaytusha with warm start, and the worker's lazy condition \eqref{equ: worker's condition 1}.}

Therefore, the iteration and stochastic gradient complexity of MDLAG can be derived in a similar fashion as those of DLAG, the only difference is that the gossip matrix $U$ is replaced by $P_K(U)$. 

Similar to DLAG, let us apply the following parameter choices: $\gamma=\frac{\alpha'\beta'\mu^2_{\mathrm{min}}}{288D\|\sqrt{P_K(U)}\|^4}e^{-\frac{2D}{\sqrt{\kappa_F}}}$, $c=\frac{\alpha'\beta'\mu^2_{\mathrm{min}}}{1200 D\|\sqrt{P_K(U)}\|^4}e^{-\frac{2(D+1)}{\sqrt{\kappa_F}}}<1$, $\eta=\frac{2}{15}\frac{1}{\beta'}$ and $s = 10$, where $\alpha' = \frac{\sigma_{n-1}(P_K(U))}{L_{\max}}$ and $\beta'=\frac{\sigma_1(P_K(U))}{\mu_{\min}}$. Then, the iteration and stochastic gradient complexities of MDLAG are given by
\begin{align*}
\mathcal{I}_{\mathrm{MDLAG}}(\varepsilon)=\mathcal{O}\left(\sqrt{\kappa'}\log(\frac{1}{\varepsilon})\right)
\end{align*}
\begin{align*}
\mathcal{G}_{\mathrm{MDLAG}}(\varepsilon)=\mathcal{O}\left(n(m+\sqrt{m\kappa_{\mathrm{max}}})\sqrt{\kappa'}\log(\frac{1}{\varepsilon})\right),
\end{align*}
where $\kappa'=\frac{\kappa_F}{\zeta(P_K(U))}$ is the condition number of problem \eqref{equ: MDLAG dual problem}, Finally, we notice that $\zeta(P_K(U))\geq \frac{1}{4}$ gives $\kappa'\leq 4 \kappa_F$, which concludes the proof.

\subsection{Comparison with ADFS \cite{hendrikx2019accelerated}}
\label{app: compare with ADFS}

In this section, we compare the performance of DLAG, MDLAG, and ADFS \cite{hendrikx2019accelerated} on cross-entropy minimization. 

To ensure that a fair comparison, We test on the \texttt{covtype} dataset and apply the recommended parameter settings in \cite{hendrikx2019accelerated} for ADFS. Specifically,
\begin{enumerate}
    \item The decentralized network is 10x10 2D grid.
    %\item For our DLAG and MDLAG, we set $\gamma=c=1e-5$ and $D=50$.
    %\item For SSDA and DLAG, stepsize is $\eta = 1/\beta$, for MSDA and MDLAG, stepsize is $\eta=1/\beta'$, where $\beta=\frac{\sigma_1(U)}{\mu_{\min}}$ and $\beta'=\frac{\sigma_1(P_K(U))}{\mu_{\min}}$. We set $s=1$ for our DLAG and MDLAG. And if not mentioned, we use default values $\gamma=c=1e-5$ and $D=50$. 
    \item All the other settings are the same as before, except for our DLAG and MDLAG, we set $\gamma=c=1e-5$, and apply 300 epochs of accelerated gradient descent to obtain an approximate dual gradient.
    % \item Data is unevenly distributed on the network. Theoretically, this leads to smaller importance factors, thus more communication save (see \eqref{equ: score function} and Theorem \ref{thm: communication complexity}). Specifically, for $b>a>0$, we first generate $p_i\sim\text{rand}[a,b], i=1,\ldots, n$. The number of data samples on worker $i$ is proportional to $p_i/(\sum_{j=1}^n p_j)$.
    % \item For SSDA and DLAG, stepsize is $\eta = 1/\beta$, for MSDA and MDLAG, stepsize is $\eta=1/\beta'$, where $\beta=\frac{\sigma_1(U)}{\mu_{\min}}$ and $\beta'=\frac{\sigma_1(P_K(U))}{\mu_{\min}}$. We set $s=1$ for our DLAG and MDLAG. And if not mentioned, we use default values $\gamma=c=1e^{-4}$ and $D=50$. 
    % \item For CoLa, we set the aggregation parameter to be 1, and apply 40 epochs of Nesterov's accelerated gradient descent to solve the local subproblem. 
\end{enumerate}
For SSDA and MSDA, we found that it is very prohibitive to apply accelerated gradient descent to solve their subproblems until reaching a high accuracy such as $1e-10$, which would lead to much larger computation complexities. Therefore, their results are not included.
 \begin{figure}[H]
 \centering
 \includegraphics[width=0.38\textwidth]{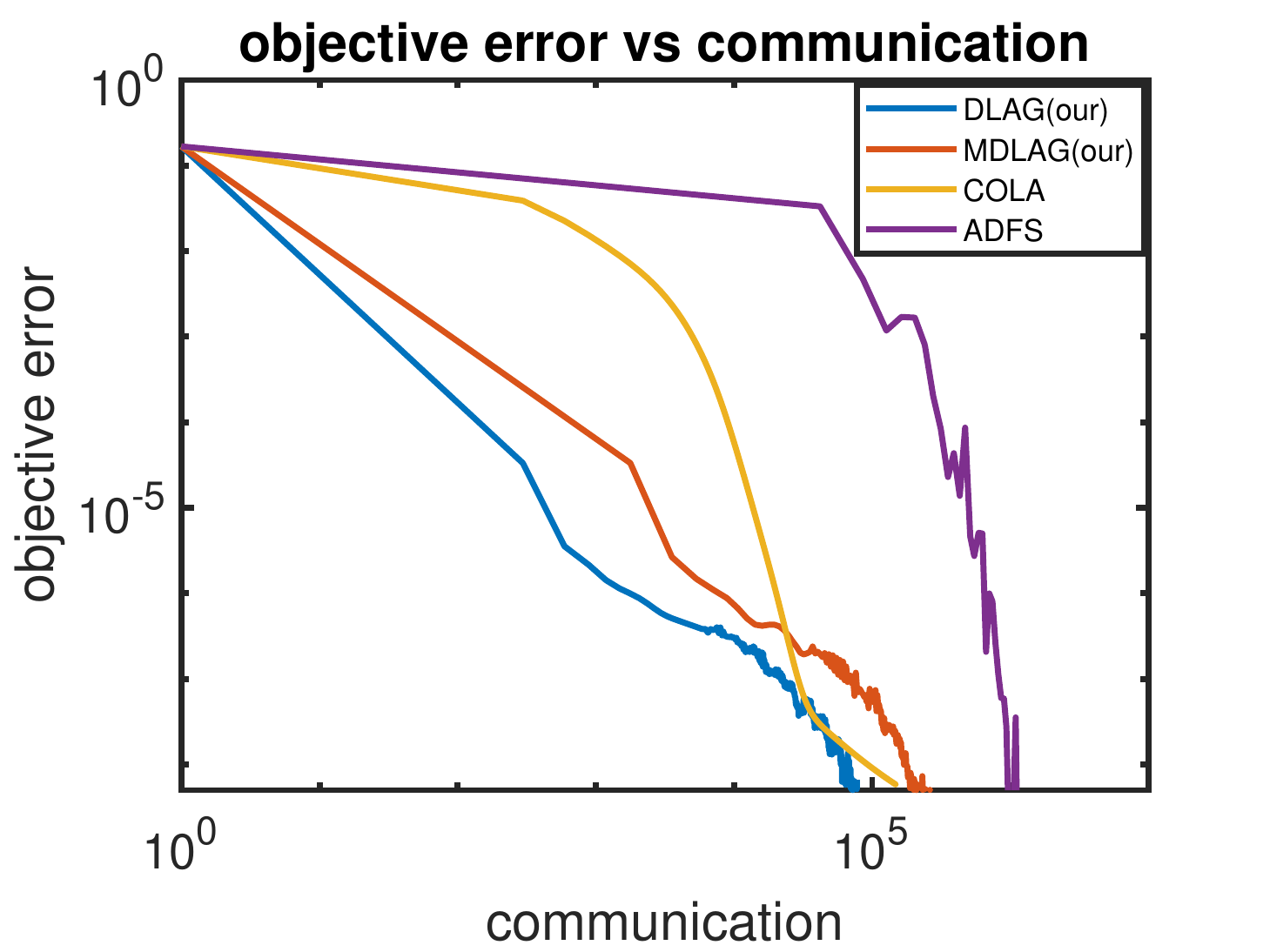}
 \caption{\footnotesize Communication complexities on \texttt{covtype} dataset.}
 \label{fig: covtype_comm}
 \end{figure}
 \begin{figure}[H]
 \centering
 \includegraphics[width=0.38\textwidth]{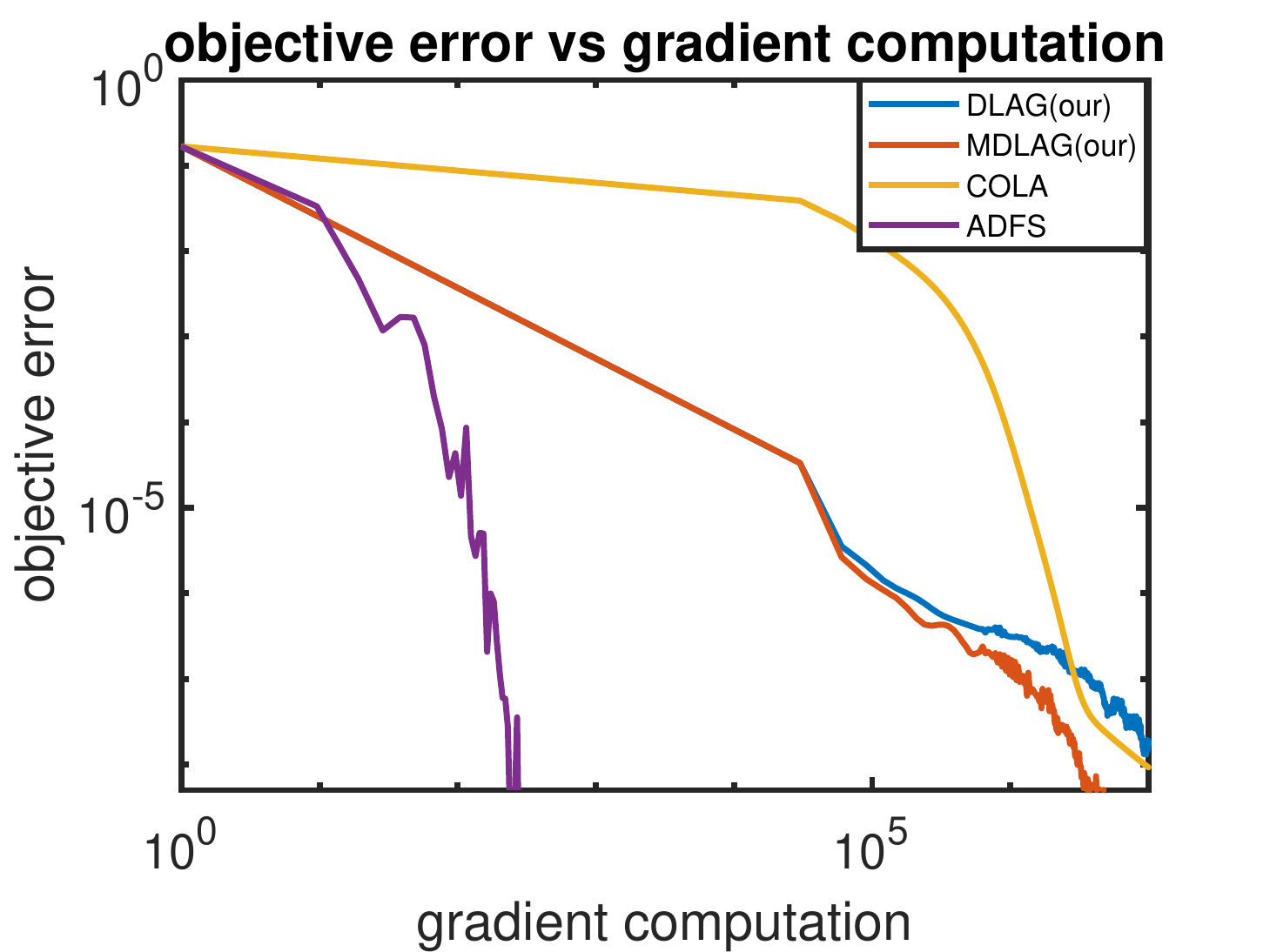}
 \caption{\footnotesize Stochastic gradient complexities on \texttt{covtype} dataset.}
 \label{fig: covtype_gradient}
 \end{figure}
From Figures \ref{fig: covtype_comm} and \ref{fig: covtype_gradient} we can see that ADFS needs more communication than DLAG and MDLAG, as it is not designed to optimize communication complexity. However, ADFS has a better stochastic gradient complexity. This is because at each iteration, ADFS solves a 1-D subproblem that is much simpler than the subproblem \eqref{equ: subproblem} of DLAG and MDLAG, this 1-D subproblem is solved approximately by 10 steps of Newton iterations with warm start.

Finally, we would like to emphasize that our algorithms DLAG and MDLAG aim at making SSDA and MSDA practical for problems without cheap dual gradients, and to reduce their communication complexity, while ADFS focuses on optimizing the running time. It is interesting to ask whether our theory for inexact dual gradients can be generalized to provide convergence guarantee for the inexactly solved subproblems in ADFS, where warm start and fixed number of Newton steps are applied.

\bibliographystyle{IEEEtran}
\bibliography{IEEEabrv, references}

% that's all folks

\end{document}